 \newtheorem{theorem}{Theorem}[section]
 \newtheorem*{maintheorem}{Main Theorem}
    \newtheorem{corollary}[theorem]{Corollary}
   \newtheorem{lemma}[theorem]{Lemma}
    \newtheorem{proposition}[theorem]{Proposition}
    \theoremstyle{definition}
\newtheorem{definition}[theorem]{Definition}
\newtheorem{example}[theorem]{Example}
\newtheoremstyle{dotless}{}{}{}{}{\bfseries}{}{ }{}
\theoremstyle{dotless}
\newtheorem*{reptheorem}{Theorem}
  \newcommand{\Z}{\ensuremath{{\mathbb{Z}}}}
\newcommand{\R}{\ensuremath{{\mathbb{R}}}}
\newcommand{\N}{\ensuremath{{\mathbb{N}}}}
  \newcommand{\G}{\Gamma}
\newcommand{\AG}{A_\G}            
\newcommand{\ra}{\rightarrow}
\newcommand{\co}{\colon\thinspace}
\title{Contracting boundaries of CAT(0) spaces}
\author{Ruth Charney and Harold Sultan}
\thanks {R. Charney was partially supported by NSF grant DMS-1106726}
\begin{document}

\begin{abstract}  As demonstrated by Croke and Kleiner, the visual boundary of a CAT(0) group is not well-defined since quasi-isometric CAT(0) spaces can have non-homeomorphic boundaries.    We introduce a new type of boundary for a CAT(0) space, called the contracting boundary, made up rays satisfying one of five hyperbolic-like properties. We prove that these properties are all equivalent and that the contracting boundary is a quasi-isometry invariant.  We use this invariant to distinguish the quasi-isometry classes of certain right-angled Coxeter groups.  
\end{abstract}
\maketitle


\section{Introduction}

Boundaries of hyperbolic spaces play a central role in the study of hyperbolic groups. The visual boundary of a hyperbolic metric space consists of equivalence classes of geodesic rays, where two rays are equivalent if they stay bounded distance from each other.  As noted by Gromov in \cite{gromov},  quasi-isometries of hyperbolic metric spaces induce homeomorphisms on their boundaries, thus giving rise to a well-defined notion of the boundary of a hyperbolic group. 
(See \cite{bridsonhaefliger} for a complete proof). 


The visual boundary of a CAT(0) space can be defined similarly.  However, as shown by the striking example of Croke and Kleiner \cite{crokekleiner}, in the CAT(0) setting, boundaries are not quasi-isometry invariant and hence one cannot talk about the boundary of a CAT(0) group.   In this paper we introduce the notion of a \emph{contracting boundary} for a CAT(0) space.  This boundary encodes information about geodesics in the CAT(0) space that behave similarly to hyperbolic geodesics.  Indeed, if the space happens to be hyperbolic, then the contracting boundary is equal to the visual boundary.   

The goal of the paper is to show that the contracting boundary enjoys many of the properties satisfied by boundaries of hyperbolic spaces.  In particular, a quasi-isometry of CAT(0) spaces induces a homeomorphism on their contracting boundaries and hence, the contracting boundary of a CAT(0) group is well-defined.  If the group contains a rank one isometry, its contracting boundary is non-empty and gives an effective, new quasi-isometry invariant.  We demonstrate this with some examples of right-angled Coxeter groups whose quasi-isometry classes can be distinguished using this invariant.

A geodesic $\alpha$ in a CAT(0) space  $X$ is \emph{contracting} if there exists a constant $D$ such that for any metric ball $B$ not intersecting $\alpha$, the projection of $B$ on $\alpha$ has diameter at most $D$.  Set theoretically, the contracting boundary, $\partial_c X$,  consists of points on the visual boundary of $X$ represented by contracting rays.  The set of rays at a basepoint that are $D$-contracting for a fixed $D$ defines a closed subspace $\partial^D_cX \subset \partial X$.  We endow the contracting boundary with the direct limit topology from these subspaces.  While  $\partial_cX$ is not, in general, compact, If $X$ is proper, then it is $\sigma$-compact, that is, it is the union of countably  many compact subspaces.  

The contracting boundary also satisfies a strong visibility property.  Namely, given a
contracting ray $\alpha$ and an arbitrary ray $\beta$, there exists a geodesics $\gamma$ such that $\gamma$ is asymptotic to $\alpha$ in one direction and asymptotic to $\beta$ in the other.  In summary, we prove

\begin{maintheorem} Given a proper, CAT(0) space X, the contracting boundary $\partial_{c}X$, equipped with the direct limit topology, is 
\begin{enumerate}
\item $\sigma$-compact,
\item a visibility space, and 
\item a quasi-isometry invariant.
\end{enumerate} 
\end{maintheorem}

One ingredient of this paper that may be of independent interest is a proof of the equivalence of various hyperbolic type properties for geodesics.  Throughout the literature a robust approach for studying spaces of interest is to first identify a class of geodesics that share features in common with geodesics in hyperbolic spaces.  There are various well-studied properties  that can be used to define precise notions of ``hyperbolic type" geodesics  including the Morse property, the contracting property, superlinear divergence, and slimness (see Section  \ref{sec:hyperbolictype} for definitions). These notions have proved fruitful in analyzing right angled Artin groups \cite{behrstockcharney}, Teichm\"uller space \cite{behrstock,brockfarb,brockmasur,bmm1}, the mapping class group \cite{behrstock}, CAT(0) spaces \cite{behrstockdrutu,bestvinafujiwara,sultanmorse}, and Out($F_{n}$) \cite{algomkfir} among others (see also \cite{drutumozessapir,drutusapir,kapovitchleeb,mm1}).  

In this paper,  we introduce a variation on divergence, called \emph{lower divergence} which captures more subtle behavior of the geodesic and makes sense for rays, as well as geodesic lines.  We prove the following theorem, which extends various prior results.

 \begin{reptheorem}{\bf\ref{theorem:equivalenceexpanded}.}
\emph{Let X be a CAT(0) space and $\gamma \subset X$ a geodesic ray or line.  Then the following are equivalent:
\begin{enumerate}
\item $\gamma$ is $D$--contracting, 
\item $\gamma$ is $M$--Morse,
\item $\gamma$ is $S$--slim,
\item $\gamma$ has superlinear lower divergence, and
\item $\gamma$ has at least quadratic lower divergence.
\end{enumerate}
Moreover,  the constants $D$ and $M$ in parts (1) and (2) determine each other. }
\end{reptheorem}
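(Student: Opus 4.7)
The plan is to prove the five-way equivalence via two interlocking cycles of implications. The first, $(1) \Rightarrow (2) \Rightarrow (3) \Rightarrow (1)$, establishes the equivalence of the contracting, Morse, and slim conditions with quantitative control on the constants, which will also yield the ``moreover'' clause that $D$ and $M$ determine one another. The second cycle, $(1) \Rightarrow (5) \Rightarrow (4) \Rightarrow (1)$, then inserts the two lower divergence conditions into the chain.

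For $(1) \Rightarrow (2)$, I would show that a $(K,C)$--quasi-geodesic $\sigma$ with endpoints on a $D$--contracting geodesic $\gamma$ must stay within a uniform $M = M(D,K,C)$ of $\gamma$: otherwise one could cover a straying subarc of $\sigma$ by a sequence of disjoint balls missing $\gamma$, each projecting to a subset of $\gamma$ of diameter at most $D$, and summing these projections would contradict the separation of the endpoints of $\sigma$ along $\gamma$. The converse $(2) \Rightarrow (1)$ extracts a contracting constant from $M$ by testing Morse-ness on a path that enters a hypothetical ball $B$ with large projection diameter and crosses from one extreme projection to the other; this path is a quasi-geodesic whose Morse neighborhood caps the projection diameter. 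The implication $(2) \Rightarrow (3)$ is nearly immediate: in a geodesic triangle with one side on $\gamma$, the concatenation of the other two sides is a $(3,0)$--quasi-geodesic with endpoints on $\gamma$ and so stays within $M$ of $\gamma$. For $(3) \Rightarrow (2)$ I would iteratively subdivide a given quasi-geodesic into segments whose geodesic closures are slim triangles based on $\gamma$, and glue the resulting neighborhood estimates into a global Morse bound.

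For the divergence part, $(1) \Rightarrow (5)$ follows by estimating the length of any path that avoids a ball of radius $r$ around a point of $\gamma$ while connecting two points of $\gamma$ at distance $\geq 2r$ from the ball's center: tile the path by small balls disjoint from $\gamma$ of radius comparable to their distance to $\gamma$; contracting forces each to project to a set of diameter $\leq D$, and a standard summation yields a lower bound quadratic in $r$. The implication $(5) \Rightarrow (4)$ is trivial. The crucial step is $(4) \Rightarrow (1)$, which I would prove by contrapositive: if $\gamma$ is not contracting, for every $D$ there is a ball $B$ disjoint from $\gamma$ whose projection to $\gamma$ has diameter larger than $D$; routing a path through $B$ produces a detour between two points of $\gamma$ whose length is controlled linearly by the amount it skips, and iterating this construction exhibits lower divergence that is at most linear, contradicting superlinearity.

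The principal technical obstacle is $(4) \Rightarrow (1)$. Turning a qualitative failure of contracting into a genuinely at-most-linear witness for lower divergence requires careful choice of the radius, center, and endpoints of the detour, plus a CAT(0) convexity argument to verify that the competitor path is short enough relative to what it avoids. Throughout the first cycle I will maintain explicit dependencies among the constants so that the quantitative correspondence between $D$ and $M$ claimed in the ``moreover'' clause falls out of the implications $(1) \Rightarrow (2)$ and $(2) \Rightarrow (1)$.
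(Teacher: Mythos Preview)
Your overall architecture is reasonable, and several of the implications are fine, but there is a concrete error in your sketch of $(2)\Rightarrow(3)$. You claim that in a geodesic triangle $\Delta(x,y,z)$ with $[y,z]\subset\gamma$, the concatenation $[y,x]\cup[x,z]$ is a $(3,0)$--quasi-geodesic. This is false: in $\mathbb{R}^2$ take $\gamma$ the $x$--axis, $y=(-1,0)$, $z=(1,0)$, and $x=(0,N)$ for large $N$; points on the two legs at height $N-1$ are at distance roughly $2/N$ apart but separated by path-length roughly $2$, so no uniform quasi-geodesic constant exists. Lemma~\ref{lem:quasigeodesic} gives a $(3,0)$--quasi-geodesic only for the path $[x,\pi_\gamma(x)]\cup[\pi_\gamma(x),z]$, which routes through the projection, not through an arbitrary apex. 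Moreover, even if the concatenation were a uniform quasi-geodesic, the Morse property would only place \emph{it} near $\gamma$, not the other way around; slimness asks that points of $[y,z]$ lie near the two legs.

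The paper sidesteps this by routing differently: it proves $(1)\Leftrightarrow(2)$ separately (Theorem~\ref{theorem:equivalence}), and then closes the remaining conditions via the cycle $(1)\Rightarrow(5)\Rightarrow(4)\Rightarrow(3)\Rightarrow(1)$. In particular it never attempts $(2)\Rightarrow(3)$ directly; instead $(3)$ enters through $(4)\Rightarrow(3)$, whose contrapositive is clean: a triangle witnessing failure of slimness at scale $r$ yields, after projecting to the two legs and to a point at height $2r$ on $[x,w]$, an explicit detour of length $\leq 14r$ avoiding $B(w,r)$. Your proposed direct contrapositive $(4)\Rightarrow(1)$ is more delicate than you suggest: a ball $B$ disjoint from $\gamma$ with large projection diameter may sit at height far exceeding that diameter, so ``routing through $B$'' need not produce a detour whose length is linear in the radius of the ball about $\gamma(t)$ that it avoids. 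The paper's passage through slimness is what makes the scales match.
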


We remark that the last statement of the theorem (proved in Theorem \ref{theorem:equivalence}) is  crucial in proving continuity of the map on contracting boundaries induced by a quasi-isometry.
   The fact that (1) implies (2) is a well known result, an explicit proof of which is given by Algom-Kfir in  \cite{algomkfir}.  In \cite{sultanmorse}, the second author shows that (2) implies (1), but without explicit control on the constants.   In \cite{bestvinafujiwara}, Bestvina and Fujiwara develop many properties of contracting geodesics and, in particular, prove that  (1) implies (3).   Related theorems also appear in  \cite{behrstock,drutumozessapir,kapovitchleeb}, though the context varies among these papers.

In the last section of the paper, we consider the case of a CAT(0) cube complex.  Recent groundbreaking work of Wise,  Agol, Groves, Manning and others has focused much attention on these spaces and shown that a wide range of groups act on such complexes.  In the case of a CAT(0) cube complex with a bounded number of cells at each vertex, we give an explicit combinatorial criterion for determining when a geodesic is contracting.  This gives an effective tool for analyzing the contracting boundary.  As an illustration, we apply these techniques to an example of two right-angled Coxeter groups whose quasi-isometry classes are not distinguished by any of the standard invariants.  We show that their contracting boundaries are not homeomorphic, hence the groups are not quasi-isometric.

Other notions of boundaries for CAT(0) cube complexes have been introduced by Roller \cite{roller}, Guralnik \cite{guralnik},  Nevo-Sageev \cite{nevosageev}, Hagen \cite{Hagen} and Behrstock-Hagen \cite{behrstockhagen}.   It would be interesting to better understand the relationship between these boundaries.  In particular, the Nevo-Sageev boundary seems closely related to the contracting boundary in the case of a rank one cube complex, although it is defined as a measure space, not a topological space.

The outline of the paper is as follows.  In Section \ref{sec:hyperbolictype}, we consider various notions of hyperbolic type geodesics and prove their equivalence.  In Section \ref{sec:contractingboundary}, we introduce the contracting boundary of a CAT(0) space and establish its key properties.  In particular, we prove that  the homeomorphism type of the contracting boundary is a quasi-isometry invariant.  In Section \ref{sec:cubecomplexes}, we specialize to the case of CAT(0) cube complexes and give a combinatorial condition for a geodesic to be contracting.  Finally, in Section \ref{application}, we apply these results to some examples of right-angled Coxeter groups. 

The first author would like to thank the  Forschungsinstitut f\"ur Mathematik in Zurich for their hospitality during development of this paper.  Both authors would like to thank Mladen Bestvina, Moon Duchin, and Koji Fujiwara for helpful conversations.

\section{Hyperbolic type geodesics}\label{sec:hyperbolictype}
\subsection{Background}\label{sec:back}

A \emph{geodesic} in a metric space $X$ is an isometric embedding of a (finite or infinite) interval into $X$.  A geodesic metric space is one in which any two points are connected by a geodesic.  A
CAT(0) space is a geodesic metric space defined by the property that geodesic triangles are no ``fatter'' than the corresponding comparison triangles in Euclidean space. 
We refer the reader to \cite{bridsonhaefliger} for a precise definition and basic properties of CAT(0) spaces.  

The following lemma describes two fundamental properties of CAT(0) spaces that will be used frequently in this paper, see \cite[Section II.2]{bridsonhaefliger} for details. 

\begin{lemma} Let $X$ be a CAT(0) space.
\begin{enumerate} \label{lem:cat}
\item[C1:] (Unique geodesics).  $\forall x,y \in X$ there is a unique geodesic connecting $x$ and $y$.  We denote this segment by $[x,y]$.
\item [C2:] (Projections onto convex subsets). Let $C$ be a convex subset, complete in the induced metric, then there is a well-defined distance non-increasing nearest point projection map $\pi_{C}\co X \ra C.$  In particular, $\pi_{C}$ is continuous.  
\item [C3:] (Convexity). Let $c_{1}\co [0,1] \ra X$ and $c_{2}\co [0,1] \ra X$ be any pair of geodesics parameterized proportional to arc length.  Then the following inequality holds for all $t\in [0,1]:$
$$d(c_{1}(t),c_{2}(t)) \leq (1-t) d(c_{1}(0),c_{2}(0)) + td(c_{1}(1),c_{2}(1))  $$
\end{enumerate}
\end{lemma}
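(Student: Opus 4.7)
The plan is to derive all three properties directly from the CAT(0) comparison-triangle inequality. For (C1), I would use the uniqueness of midpoints in CAT(0) spaces. Given two geodesics $c, c' \co [0,1] \to X$ from $x$ to $y$, both $c(1/2)$ and $c'(1/2)$ are midpoints of the pair $x, y$; applying the CAT(0) inequality (in its CN form, or equivalently to the degenerate comparison triangle with vertex set $\{x, y, y\}$) forces any two midpoints to coincide, so $c(1/2) = c'(1/2)$. Iterating on each subsegment extends equality to all dyadic parameters, and continuity of $c$ and $c'$ then gives $c \equiv c'$.

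For (C2), existence of $\pi_{C}(x)$ follows from a standard variational argument: the CAT(0) inequality implies that $p \mapsto d(x,p)^{2}$ is uniformly convex along any geodesic in $C$, so a distance-minimizing sequence in $C$ is Cauchy and, by completeness of $C$, converges to a unique minimizer $\pi_{C}(x)$. To show that $\pi_{C}$ is $1$-Lipschitz, the geodesic from $\pi_{C}(x)$ to $\pi_{C}(y)$ lies in $C$ by convexity, and the minimality of the projections forces (via the first-variation inequality derived from CAT(0) comparison) the Alexandrov angles at $\pi_{C}(x)$ and $\pi_{C}(y)$ in the ``quadrilateral'' $x, \pi_{C}(x), \pi_{C}(y), y$ to each be at least $\pi/2$; a standard comparison estimate then yields $d(\pi_{C}(x), \pi_{C}(y)) \le d(x, y)$, and continuity of $\pi_{C}$ is immediate.

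For (C3), I would apply the CAT(0) inequality twice to a pair of triangles sharing the diagonal $[c_{1}(0), c_{2}(1)]$. Let $m$ be the point on the geodesic from $c_{1}(0)$ to $c_{2}(1)$ at parameter $t$. Comparing $\Delta(c_{1}(0), c_{1}(1), c_{2}(1))$ to its Euclidean model, and observing that $c_{1}(t)$ and $m$ lie at proportional parameter $t$ on the two sides emanating from $c_{1}(0)$, yields $d(c_{1}(t), m) \le t \cdot d(c_{1}(1), c_{2}(1))$. Analogously, comparing $\Delta(c_{1}(0), c_{2}(0), c_{2}(1))$, with $m$ and $c_{2}(t)$ at proportional parameter $1-t$ from the shared vertex $c_{2}(1)$, yields $d(m, c_{2}(t)) \le (1-t) \cdot d(c_{1}(0), c_{2}(0))$. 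The triangle inequality $d(c_{1}(t), c_{2}(t)) \le d(c_{1}(t), m) + d(m, c_{2}(t))$ then delivers (C3).

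I expect (C2) to be the main obstacle, since it has two essentially independent layers: existence of the projection requires uniform convexity of $d(x, \cdot)^{2}$ combined with completeness of $C$, while the Lipschitz estimate requires controlling Alexandrov angles at the projection points via the first-variation characterization of nearest points. The arguments for (C1) and (C3), by contrast, are essentially one-line applications of the comparison inequality once the right auxiliary points have been chosen.
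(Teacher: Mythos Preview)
Your sketch is correct and follows the standard arguments from Bridson--Haefliger, Chapter~II.1--2. The paper itself does not prove this lemma; it is stated as background with a reference to \cite[Section II.2]{bridsonhaefliger} for details, so there is no in-paper proof to compare against.
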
  

The following notions will also play a central role in this paper.

\begin{definition}[quasi-isometry; quasi-geodesic] \label{defn:quasi} A map $f:X \ra Y$ is called a \emph{(K,L)-quasi-isometric embedding} if $\forall s,t \in X$  the following inequality holds:
\begin{equation} \label{eq:qi} \frac{1}{K}\, d_{X}(s,t) -L \leq  d_{Y}(f(s),f(t)) \leq Kd_{X}(s,t)+L.
\end{equation} 
 If, in addition, $f$ satisfies
\begin{equation} \forall y \in Y, \, \exists x \in X  \textrm{ such that } d_{Y}(f(x),y) < L,
\end{equation} 
then $f$ is called a \emph{(K,L)-quasi-isometry}.  The special case of a quasi-isometric embedding where the domain is a connected interval in $\R$ (possibly all of $\R$) is called a \emph{(K,L)-quasi-geodesic.} 

Given a quasi-isometry $f:X \ra Y$,  there exists a \emph{quasi-inverse} $g:Y \ra X$, which is itself is a quasi-isometry such that there exists a constant $C,$ depending only on $K,L$,  with the property  that for all $x \in X, y \in Y$,
$$d_{X}(x,gf(x)) \leq C\, \textrm{ and }\,  d_{Y}(y,fg(y) \leq C.$$
 \end{definition}

\subsection{Contracting and Morse Geodesics}

We are interested in geodesics which behave similarly to geodesics in a hyperbolic space.  A key property of hyperbolic geodesics is the contracting property.  
The following notion of contracting geodesics can be found for example in \cite{bestvinafujiwara}, and has its roots based in a slightly more general notion of $(a,b,c)$--contraction found in \cite{mm1} where it serves as a key ingredient in the proof of the hyperbolicity of the curve complex.

\begin{definition}[contracting geodesics] Given a fixed constant $D,$ a geodesic $\gamma$ is said to be \emph{D--contracting} if $\forall x,y \in X,$ 
\[
d_{X}(x,y)<d_{X}(x,\pi_{\gamma}(x))  \implies d_{X}(\pi_{\gamma}(x),\pi_{\gamma}(y))<D.
\]
We say $\gamma$ is \emph{contracting} if it is $D$-contracting for some $D$.  Equivalently, any metric ball $B$ not intersecting $\gamma$ projects to a segment of length $< 2D$ on $\gamma$.
\end{definition}

 In this section we will give several equivalent characterizations of contracting geodesics.  The contracting boundary introduced in the next section   will consist  precisely of such contracting geodesic rays.   The various equivalent characterizations will be used to in prove key properties of the contracting boundary.

The first characterization is the notion of a Morse (quasi-)geodesic which has roots in the classical paper \cite{morse}.  For any subset $A \subset X$ and constant $r>0$, let $N_r(A)$ denote the $r$-neighborhood of A.  Recall that two subspaces $A,B \subset X$ have \emph{Hausdorff distance} at most $r$ if $A \subset N_r(B)$ and $B \subset N_r(A)$.

\begin{definition}[Morse quasi-geodesics] A (quasi-)geodesic $\gamma$ is called M--\emph{Morse} if for any constants $K\geq1,L\geq0,$ there is a constant $M=M(K,L),$ such that for every $(K,L)$-quasi-geodesic $\sigma$ with endpoints on $\gamma,$ we have $\sigma \subset N_{M}(\gamma).$   \end{definition} 

The following properties of Morse quasi-geodesics are easily verified.  

\begin{lemma}\label{lem:constants}  Let $\gamma$ be an $M$-Morse quasi-geodesic in a CAT(0) space $X$.
\begin{enumerate}
\item If  $\rho$ is a quasi-geodesic whose Hausdorff distance from $\gamma$ is at most $C$, then $\rho$ is $M'$-Morse where $M'$ depends only on $M$ and $C$.
\item If $Y$ is a geodesic metric space and $f: X \to Y$ is a $(\lambda,\epsilon)$-quasi-isometry, then $f\circ\gamma$ is $M''$-Morse where $M''$ depends only on $\lambda,\epsilon$ and $M$.
\item If $\gamma$ is a $(K,L)$-quasi-geodesic, there exists $C$ depending only on $M, K, L$ such that for any two points  $x=\gamma(t)$ and $y=\gamma(t')$  on $\gamma$,  the geodesic $[x,y]$ has Hausdorff distance at most $C$ from $\gamma([t,t'])$.  
\end{enumerate}
\end{lemma}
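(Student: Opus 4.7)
The plan is to handle the three parts in sequence. Parts (1) and (2) share a common template: modify the given quasi-geodesic slightly so that its endpoints lie on $\gamma$ itself, and then invoke the Morse hypothesis for $\gamma$ with slightly degraded quasi-geodesic constants. Part (3) is the main technical point and requires a stability-type argument.

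For part (1), given a $(K,L)$-quasi-geodesic $\sigma$ with endpoints on $\rho$, the plan is to pick points on $\gamma$ within distance $C$ of each endpoint of $\sigma$ and prepend/append the corresponding geodesic segments of length at most $C$. The resulting path $\sigma'$ is a $(K',L')$-quasi-geodesic with $K',L'$ depending only on $K,L,C$, and its endpoints lie on $\gamma$. The Morse hypothesis then gives $\sigma'\subset N_{M(K',L')}(\gamma)\subset N_{M(K',L')+C}(\rho)$, so $M'(K,L):=M(K',L')+C$ makes $\rho$ Morse.

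For part (2), choose a quasi-inverse $g\co Y\ra X$ of $f$. Any $(K,L)$-quasi-geodesic $\tau$ in $Y$ with endpoints on $f\circ\gamma$ pulls back to a $(K',L')$-quasi-geodesic $g\circ\tau$ in $X$ (with $K',L'$ depending only on $K,L,\lambda,\epsilon$) whose endpoints lie within a constant $C=C(\lambda,\epsilon)$ of $\gamma$. Applying the end-adjustment of part (1) and the Morse hypothesis for $\gamma$ bounds $g\circ\tau$ in a uniform neighborhood of $\gamma$. Pushing forward by $f$ and using the standard estimate $d(f\circ g(y),y)\leq C'$ yields $\tau\subset N_{M''(K,L)}(f\circ\gamma)$, with $M''$ depending only on $M,\lambda,\epsilon$.

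For part (3), applying the Morse hypothesis to $[x,y]$, viewed as a $(1,0)$-quasi-geodesic with endpoints on $\gamma$, gives $[x,y]\subset N_{M(1,0)}(\gamma)$ immediately. Moreover, since the CAT(0) projection $\pi_{[x,y]}$ is continuous (property C2) and carries $\gamma(t)\mapsto x$ and $\gamma(t')\mapsto y$, the restriction $\pi_{[x,y]}\circ\gamma|_{[t,t']}$ is a continuous surjection onto $[x,y]$; consequently any uniform bound $\sup_{s\in[t,t']}d(\gamma(s),[x,y])\leq C$ automatically yields both Hausdorff containments. The task therefore reduces to producing such a uniform bound depending only on $M,K,L$, and this is the main obstacle. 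I would argue by contradiction: suppose $d(\gamma(s_0),[x,y])>R$ for some $s_0\in[t,t']$ and a very large $R$. Using that $\gamma$ itself is a $(K,L)$-quasi-geodesic, one builds an auxiliary quasi-geodesic with controlled constants $K',L'$ depending only on $K,L$ by combining subsegments of $\gamma$ with the geodesic from $\gamma(s_0)$ to its projection on $[x,y]$, and shows that for $R$ large in terms of $M(K',L')$ this auxiliary path — whose endpoints sit on $\gamma$ — is forced outside of $N_{M(K',L')}(\gamma)$, violating the Morse hypothesis. This is the standard stability-of-quasi-geodesics argument for Morse quasi-geodesics; the delicate point is ensuring that $K',L'$ (and hence $M(K',L')$) depend only on $K,L$ and not on $d(x,y)$, so that the contradiction is genuine once $R$ is sufficiently large.
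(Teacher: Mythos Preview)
Your treatment of parts (1) and (2) is correct and matches the paper's approach; the paper in fact leaves (1) to the reader and proves (2) by exactly the quasi-inverse argument you describe, organized slightly differently (they first apply part (1) to $g\circ f\circ\gamma$, which has bounded Hausdorff distance from $\gamma$, and then invoke the Morse property of $g\circ f\circ\gamma$ directly rather than adjusting endpoints).

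Your reduction in part (3) via the surjectivity of $\pi_{[x,y]}\circ\gamma|_{[t,t']}$ is a nice observation and correct: once you know $\gamma([t,t'])\subset N_C([x,y])$, the other containment follows for free. However, the contradiction argument you sketch for obtaining this bound has a genuine gap. The Morse hypothesis for $\gamma$ says that quasi-geodesics with endpoints on $\gamma$ stay close to $\gamma$; it says nothing directly about points of $\gamma$ staying close to $[x,y]$. The point $\gamma(s_0)$ lies \emph{on} $\gamma$, and its projection $p\in[x,y]$ is itself within $M(1,0)$ of $\gamma$; so any auxiliary path you build out of subsegments of $\gamma$ and the segment $[\gamma(s_0),p]$ has no point that is visibly far from \emph{all} of $\gamma$. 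You have not identified such a point, and I do not see how to extract one from the hypothesis $d(\gamma(s_0),[x,y])>R$ alone. Without that, there is nothing to contradict the Morse property of $\gamma$.

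The paper's argument avoids this by following Bridson--Haefliger III.H, Theorem~1.7: first replace $\gamma|_{[t,t']}$ by a \emph{tame} quasi-geodesic (continuous, with arclength on subintervals controlled linearly by endpoint distance, the constants $k_1,k_2$ depending only on $K,L$); then, using that every point of $[x,y]$ lies within $M_0=M(1,0)$ of $\gamma([t,t'])$, take a maximal subsegment $\gamma([s,s'])$ lying outside $N_{M_0}([x,y])$ and use a connectivity argument to find a single point $z\in[x,y]$ within $M_0$ of both $\gamma(r)$ and $\gamma(r')$ for some $r\le s$ and $r'\ge s'$. Tameness then bounds the length of $\gamma([r,r'])$ by $2k_1M_0+k_2$, giving $C=M_0+k_1M_0+k_2/2$. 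The key ingredients you are missing are this tameness length bound and the connectivity trick; your proposed auxiliary-path construction does not substitute for them.
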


\begin{proof}  (1) This is an easy exercise which we leave to the reader.

(2)  Let $g : Y \to X$ be a quasi-inverse of $f$.  Then $g\circ f\circ \gamma$ has Hausdorff distance at most $C$ from $\gamma$ for some $C$ depending only on $\lambda,\epsilon$, so by part (1), it is $M'$--Morse where $M'$ depends only on $M,\lambda,\epsilon$.  Suppose $\beta$ is a $(K,L)$-quasi geodesic in $Y$ between two points on $f \circ \gamma$. Then $g \circ \beta$ is a $(K',L')$-quasi-geodesic between two points on $g \circ f \circ \gamma$ where $K',L'$ depend only on $K,L,\lambda,\epsilon$, so $g \circ \beta$ lies in the $M'(K',L')$-neighborhood of $g \circ f\circ \gamma$. It follows that $\beta$ lies in the $\lambda(M'(K',L')  +\epsilon)$-neighborhood of $f \circ \gamma$.  Setting 
$M''(K,L)=\lambda(M'(K',L')  +\epsilon)$, we conclude that $f \circ \gamma$ is $M''$-Morse.

(3)  The proof of this statement follows the proof of Theorem 1.7 in \cite{bridsonhaefliger} III.H.  Set $M_0=M(1,0)$.
Since $\gamma$ is $M$-Morse and $[x,y]$ is geodesic, $[x,y] \subset N_{M_0}(\gamma([t,t']))$.   Thus, it suffices to find $C$ such that $\gamma([t,t']) \subset N_C([x,y])$.    By Lemma 1.11 of \cite{bridsonhaefliger} III.H, we may assume without loss of generality  that $\gamma([t,t'])$ is ``tame", that is, it is continuous and for any subinterval $[s,s'] \subseteq [t,t']$, 
$$ length(\gamma([s,s'])  \leq k_1 d(\gamma(s),\gamma(s')) + k_2$$
where $k_1,k_2$ depend only on $\lambda,\epsilon$. 

  If  $\gamma([t,t']) \subset N_{M_0}([x,y])$ we are done.  If not, consider a maximal segment $[s,s'] \subset [t,t']$ such that $\gamma([s,s'])$ lies outside $N_{M_0}([x,y])$. Every point of $[x,y]$ lies within $M_0$ of some point on $\gamma([t,t'])$,  so by continuity, there exists a point $z \in [x,y]$, such that $z$ lies within $M_0$ of two points, $\gamma(r)$ and $\gamma(r')$, with 
$r \in [t,s], r' \in [s',t']$.  In particular, $d(\gamma(r),\gamma(r')) \leq 2M_0$. By the tameness condition, it follows that $length(\gamma([r,r']) \leq 2k_1M_0 +k_2$.  Hence $\gamma([r,r'])$ lies within $M_0+k_1M_0 + k_2/2$ of $z$.  Taking $C=M_0+k_1M_0 + k_2/2$, we conclude that .$\gamma([t,t']) \subset N_C([x,y])$. 
\end{proof}

To prove the equivalence of contracting geodesics and Morse geodesics, we will need to understand quasi-geodesics of a particular form considered in the next two lemmas.

\begin{lemma}\label{lem:quasigeodesic} Let $X$ be a CAT(0) space. 
For any triple of points $x,y,z \in X,$ the concatenated path $$\phi=[x,\pi_{[y,z]}(x)] \cup [\pi_{[y,z]}(x),z],$$ is a (3,0) quasi-geodesic.  
\end{lemma}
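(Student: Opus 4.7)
The plan is to parametrize $\phi$ by arc length and verify the $(3,0)$-quasi-geodesic inequalities directly. Write $p = \pi_{[y,z]}(x)$, so that $\phi$ is the geodesic segment $[x,p]$ of length $d(x,p)$ concatenated with $[p,z]$ of length $d(p,z)$, giving $\phi$ total length $d(x,p) + d(p,z)$. The upper bound $d(\phi(s),\phi(t)) \leq |s-t|$ is immediate from the triangle inequality, since each piece of $\phi$ is a geodesic.

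For the lower bound $|s-t| \leq 3\, d(\phi(s),\phi(t))$, the only nontrivial case is $a := \phi(s) \in [x,p]$ and $b := \phi(t) \in [p,z]$; if both points lie on the same subsegment then $\phi$ is already isometric there and the inequality holds with constant $1$. So the task reduces to showing, for such $a$ and $b$, that
$$d(a,p) + d(p,b) \;\leq\; 3\, d(a,b).$$

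The one substantive step, which I expect to be the main (and really only) obstacle, is to use the nearest-point property of $p$ to prove $d(a,b) \geq d(a,p)$. Here I would argue: since $b \in [y,z]$ and $p = \pi_{[y,z]}(x)$, we have $d(x,b) \geq d(x,p)$. Because $a \in [x,p]$, we also have $d(x,p) = d(x,a) + d(a,p)$. Combining with the triangle inequality $d(x,b) \leq d(x,a) + d(a,b)$ and cancelling $d(x,a)$ yields $d(a,p) \leq d(a,b)$. This is where property C2 of Lemma \ref{lem:cat} enters the argument.

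With this inequality in hand, set $\alpha = d(a,p)$ and $\beta = d(p,b)$, and split into two cases. If $\beta \leq 2\alpha$, then $\alpha + \beta \leq 3\alpha \leq 3\, d(a,b)$ directly. If $\beta > 2\alpha$, then the triangle inequality gives $d(a,b) \geq \beta - \alpha > \beta/2$, hence $\beta < 2\, d(a,b)$ and $\alpha \leq d(a,b)$, so again $\alpha + \beta \leq 3\, d(a,b)$. Everything beyond the projection inequality is just triangle-inequality bookkeeping, so I do not anticipate any further difficulty.
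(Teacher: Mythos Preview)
Your proof is correct and follows essentially the same approach as the paper: establish $d(a,p)\le d(a,b)$ from the nearest-point property of $p$, and then bound $d(a,p)+d(p,b)$ by $3\,d(a,b)$. The paper's finish is slightly more direct---it avoids your case split by noting once and for all that $d(p,b)\le d(a,p)+d(a,b)\le 2\,d(a,b)$ via the triangle inequality---but this is only a cosmetic difference.
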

\begin{proof}
We must show that $\forall u,v \in \phi,$ the (3,0)--quasi-isometric inequality of Equation (\ref{eq:qi}) is satisfied.  Since $\phi$ is a concatenation of two geodesic segments, without loss of generality we can assume $u \in  [x,\pi_{[y,z]}(x)], v \in [\pi_{[y,z]}(x),z].$  Since $u \in[x,\pi_{[y,z]}(x)]$ it follows that $\pi_{[y,z]}(u)=\pi_{[y,z]}(x),$ and hence $d(u,\pi_{[y,z]}(x))\leq d(u,v).$  Let $d_{\phi}(u,v)$ denote the distance along $\phi$ between $u$ and $v.$  
Then, the following inequality completes the proof:
\begin{eqnarray*}
d(u,v) \leq d_{\phi}(u,v) &=& d(u,\pi_{[y,z]}(x)) + d(\pi_{[y,z]}(x),v) \\
& \leq& d(u,\pi_{[y,z]}(x)) + \left(d(u, \pi_{[y,z]}(x)) + d(u,v) \right) \\ 
 & \leq& 3d(u,v)
\end{eqnarray*} 
\end{proof}

Next, we consider a concatenation of three geodesics segments.  We will show that by cutting off ``corners", we can obtain a  quasi-geodesic with controlled quasi-constants.

Let $\gamma$ be a geodesic, and let $x,y \in X$ be two points not on $\gamma$.  As in Figure \ref{fig:cases},  set $D=d(\pi_{\gamma}(x),\pi_{\gamma}(y))$  and let $a,b,c$ be constants such that
\begin{equation}\label{abc}
aD = d(x,\pi_{\gamma}(x)) \quad  bD = d(x,y)  \quad cD = d(y,\pi_{\gamma}(y)).
\end{equation}
  For any point $z$ in $[x,y]$, $d(z, \pi_{\gamma}(z)) \geq aD-d(z,x)$ and 
 $d(z, \pi_{\gamma}(z)) \geq cD-d(z,y)$, so the distance from $z$ to $\gamma$ is at least the average of these two quantities, namely, 
 \begin{equation}\label{fig inequality}
 d(z, \gamma) \geq \frac{D(a+c-b)}{2}.
 \end{equation}

\begin{figure}[htpb] 
\centering
\includegraphics[height=7 cm]{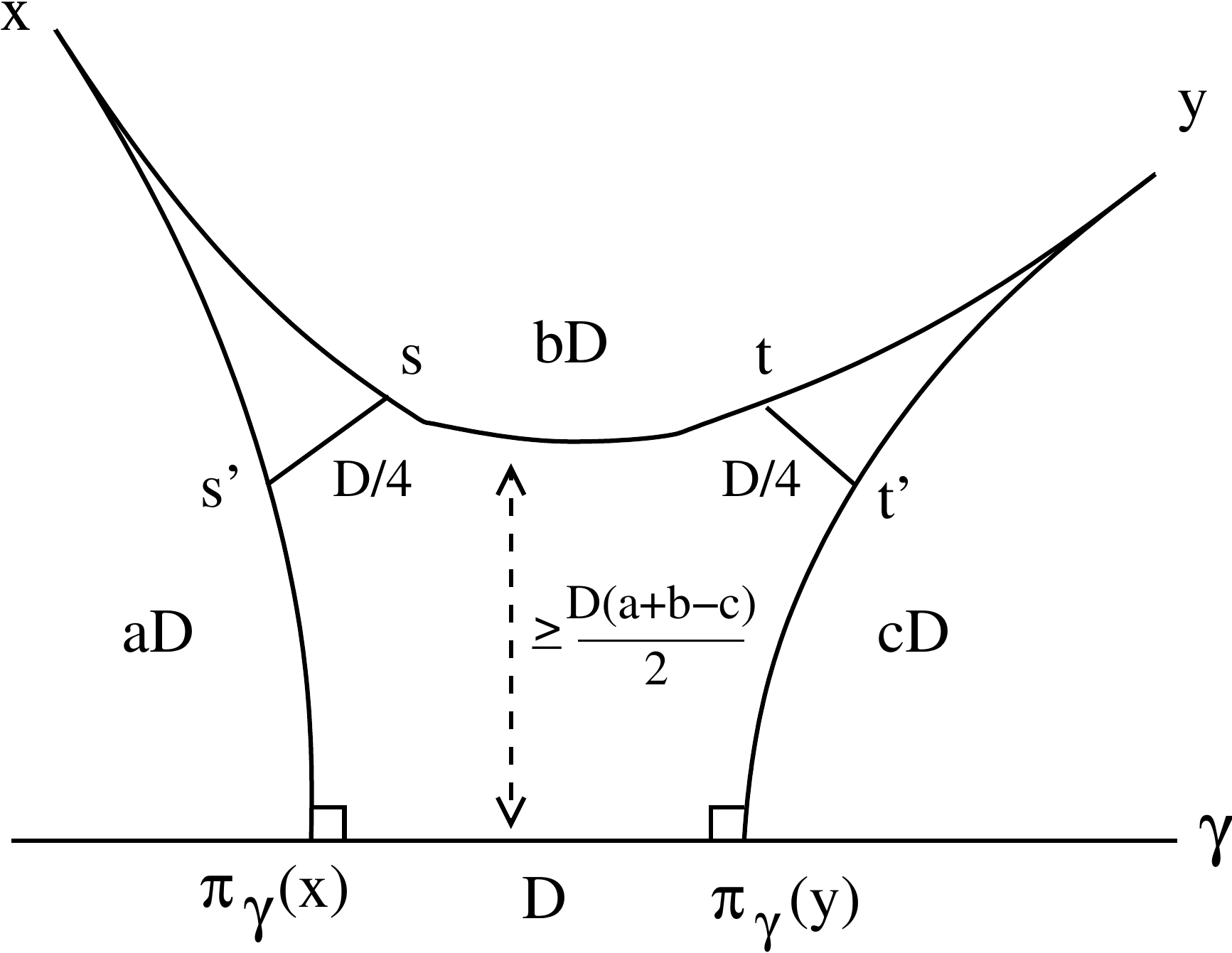}
\caption{Illustration of Lemma \ref{lem:genquasi}.}\label{fig:cases}
\end{figure}       

 Note that by property [C2] of Lemma \ref{lem:cat}, $b \geq 1.$  By property [C3] of the lemma, the function $\rho(z)=d(z, [x,\pi_{\gamma}(x)])$ is convex.  Thus, as $z$ goes from $x$ to $y$, it is strictly increasing once $\rho(z) > 0$.   Hence there is a unique point  $s \in [x,y]$ such that $d(s, [x,\pi_{\gamma}(x)])=\frac{D}{4}.$  Let $s'$ be the projection of $s$ on $[x,\pi_{\gamma}(x)].$  Similarly, there is a unique point $t \in [x,y]$  such that $d(t, [y,\pi_{\gamma}(y)])=\frac{D}{4}.$  Let $t'$ be the projection of $t$ on $[y,\pi_{\gamma}(y)].$  
Note that the projection of $[s,t]$ on $\gamma$ has length at least $\frac{D}{2}$, hence $[s,t]$ also has length at least 
$\frac{D}{2}.$



\begin{lemma} \label{lem:genquasi}  With notation as above, set $K=4(a+b+c)$.
Then the concatenation $$\phi=[\pi_{\gamma}(x),s'] \cup [s',s] \cup [s,t] \cup [t,t'] \cup [t',\pi_{\gamma}(y)],$$ is a $(K, 0)$-quasi-geodesic.  
\end{lemma}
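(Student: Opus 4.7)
Since $d(u,v) \leq d_\phi(u,v)$ holds automatically, the task reduces to showing the reverse bound $d_\phi(u,v) \leq K \cdot d(u,v)$ for $K = 4(a+b+c)$ and all $u,v \in \phi$. My plan is to proceed by a case analysis on the positions of $u$ and $v$ within the five subsegments (call them $\phi_1,\dots,\phi_5$ in order), after first establishing a sharp bound on the total length of $\phi$.

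To bound the length, note $[p,s'] \subset [p,x]$ gives $d(p,s') \leq aD$ and similarly $d(t',q) \leq cD$, while $d(s',s) = d(t,t') = D/4$ by construction. The critical point is the length $d(s,t)$ of the middle segment: since $s' \in [x,p]$ is the closest point of $[x,p]$ to $s$, we must have $d(x,s) \geq d(s,s') = D/4$, and symmetrically $d(y,t) \geq D/4$; hence $d(s,t) = bD - d(x,s) - d(t,y) \leq bD - D/2$. Summing yields length of $\phi \leq (a+b+c)D$, sharper than the naive $(a+b+c+\tfrac12)D$.

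The case analysis has three main flavors. (a) When $u$ and $v$ both lie in the left L-shape $\phi_1 \cup \phi_2$ (or both in the right one), the Alexandrov angle at the corner $s'$ is at least $\pi/2$ because $s'$ is the metric projection of $s$ onto $[x,p]$; the CAT(0) inequality then gives $d(u,v)^2 \geq d(u,s')^2 + d(s',v)^2$, so $d_\phi(u,v) \leq \sqrt{2}\,d(u,v)$. (b) When $u$ and $v$ straddle the middle, with $u \in \phi_1 \cup \phi_2$ and $v \in \phi_4 \cup \phi_5$, the $1$-Lipschitz projection $\pi_\gamma$ together with $\pi_\gamma([x,p]) = \{p\}$ places $\pi_\gamma(u)$ within $D/4$ of $p$, and symmetrically $\pi_\gamma(v)$ within $D/4$ of $q$; hence $d(u,v) \geq d(\pi_\gamma(u),\pi_\gamma(v)) \geq D/2$, yielding ratio $\leq 2(a+b+c) \leq K$. (c) When one of $u,v$ lies in the middle segment $[s,t]$ and the other in an L-shape, the key estimate is $d(v,[x,p]) \geq D/4$ for $v \in [s,t]$; this follows because $z \mapsto d(z,[x,p])$ is convex and non-negative on $[x,y]$, vanishes at $x$, hence is non-decreasing, and already equals $D/4$ at $s$. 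For $u \in \phi_2$ with $d(u,s) = \mu$, the CAT(0) projection property gives $d(u,[x,p]) = d(u,s') = D/4 - \mu$, so $d(u,v) \geq \mu$; combining with the triangle bound $d(u,v) \geq d(s,v) - \mu$, a quick split on whether $d(s,v) \leq 2\mu$ or not yields $d_\phi(u,v) \leq 3\,d(u,v)$. For $u \in \phi_1$, the cruder bound $d(u,v) \geq D/4$ against a subpath length $\leq (a+b-\tfrac14)D$ suffices.

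The main obstacle is the mixed case (c): the angle at $s$ between $[s,s']$ and $[s,t]$ is in general at most $\pi/2$, so one cannot lean on a right-angle CAT(0) bound there. One must instead translate the convexity of $d(\,\cdot\,,[x,p])$ along $[x,y]$ into a lower bound on $d(u,v)$ via projection onto the convex set $[x,p]$. Once this trick is in hand, the remaining case work is straightforward arithmetic, and the sharpened length bound $(a+b+c)D$ is precisely what makes everything fit within $K = 4(a+b+c)$ (using $b \geq 1$ to absorb small additive constants in the case (c) ratios).
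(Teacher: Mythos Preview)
Your proof is correct and follows essentially the same outline as the paper: bound the total length of $\phi$ by $(a+b+c)D$, obtain the lower bound $d(u,v)\ge D/4$ (or $D/2$) whenever $u,v$ are separated by a full middle segment, and treat adjacent pieces via a projection argument. The paper collapses your case (a) and the $u\in\phi_2$ subcase of (c) into a single appeal to Lemma~\ref{lem:quasigeodesic}, while your case (b) and the $u\in\phi_1$ subcase of (c) match the paper's non-adjacent case verbatim.

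One remark on your motivation for case (c): your assertion that the angle at $s$ between $[s,s']$ and $[s,t]$ is ``in general at most $\pi/2$'' is not correct. The very monotonicity of $\rho(z)=d(z,[x,\pi_\gamma(x)])$ along $[x,y]$ that you invoke shows $d(s',v)\ge d(v,[x,\pi_\gamma(x)])\ge D/4=d(s',s)$ for every $v\in[s,y]$, so in fact $s=\pi_{[s,y]}(s')$ and the Alexandrov angle at $s$ is $\ge\pi/2$. This is precisely why the paper can invoke Lemma~\ref{lem:quasigeodesic} uniformly for all adjacent pairs, including $\phi_2\cup\phi_3$. Your convexity/Lipschitz argument for that subcase is of course still valid; it simply reproves the $(3,0)$ bound by a slightly different route rather than circumventing a genuine obstruction.
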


\begin{proof} We will show that $\forall w,z \in \phi,$ the $(K, 0)$--quasi-isometric inequality is satisfied.  Since $\phi$ is a concatenation of geodesics, without loss of generality we can assume $w,z$ belong to different geodesic segments within $\phi.$    In the case where $w$ and $z$ belong to adjacent segments, it follows from Lemma \ref{lem:quasigeodesic}  and the fact that $K >4b>3$ that the (K,0)--quasi-isometric inequality holds.    Hence, to complete the proof,  it suffices to consider the case in which $w,z$ are separated by at least one of the segments $[s',s], [s,t],$ or $[t,t']$.

If one of the points  $w,z$ lies on $[s,t]$ and the other on $[\pi_{\gamma}(x),s'] $ or $[t',\pi_{\gamma}(y)],$ then by construction, $d(w,z )>\frac{D}{4}$.  Otherwise, $w,z$ lie on opposite  sides of $[s,t]$, hence their projections on $\gamma$ have distance at least $\frac{D}{2}$, so $d(w,z) \geq \frac{D}{2}$.  In either case, $D < 4 \,d(w,z),$ so we have
$$ d(w,z) \leq d_{\phi}(w,z) \leq (a+b+c)D < 4(a+b+c) \, d(w,z) = K\,  d(w,z). $$
\end{proof}

Applying Lemma \ref{lem:genquasi} in the case where $\gamma$ is a Morse geodesic, we have the following corollary.

\begin{corollary} \label{cor:genquasi}  Let $\gamma$ be an $M$-Morse geodesic and $x,y \in X$. With notation as above, set $K=4(a+b+c)$,  $N=\frac{1}{2}(a+c-b)$.  
If $N >0$, then $D \leq \frac{1}{N}M(K,0)$.   
 \end{corollary}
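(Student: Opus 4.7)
The plan is to apply the Morse property of $\gamma$ directly to the quasi-geodesic $\phi$ constructed in Lemma \ref{lem:genquasi}.

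First, I note that the two endpoints of $\phi$, namely $\pi_\gamma(x)$ and $\pi_\gamma(y)$, both lie on $\gamma$. Since Lemma \ref{lem:genquasi} tells us $\phi$ is a $(K,0)$-quasi-geodesic and $\gamma$ is $M$-Morse, we conclude immediately that $\phi \subset N_{M(K,0)}(\gamma)$. In particular, the subsegment $[s,t] \subset \phi$ is contained in the $M(K,0)$-neighborhood of $\gamma$.

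Second, I invoke the lower bound on distance from $[x,y]$ to $\gamma$ given in inequality \eqref{fig inequality}, applied to any point $z \in [s,t]$. Since $[s,t] \subset [x,y]$, this yields
\[
d(z,\gamma) \geq \frac{D(a+c-b)}{2} = ND
\]
for every $z \in [s,t]$. The hypothesis $N > 0$ is exactly what is needed to make this a non-vacuous lower bound.

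Combining the two bounds (an upper bound $M(K,0)$ from the Morse property and a lower bound $ND$ from the geometry), we obtain $ND \leq M(K,0)$, and dividing by $N > 0$ gives $D \leq \tfrac{1}{N}M(K,0)$, as required. There is no real obstacle here: the work was already done in establishing Lemma \ref{lem:genquasi} and inequality \eqref{fig inequality}; the corollary is essentially their juxtaposition via the Morse hypothesis.
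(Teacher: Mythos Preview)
Your proof is correct and follows essentially the same approach as the paper: apply the Morse property to the $(K,0)$-quasi-geodesic $\phi$ (whose endpoints lie on $\gamma$) to bound $[s,t]$ above by $M(K,0)$ from $\gamma$, and combine this with the lower bound $d([s,t],\gamma)\geq ND$ from inequality~\eqref{fig inequality}. The paper's argument is identical in structure and content.
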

 
 \begin{proof}
As observed in equation \ref{fig inequality} above,  $d([x,y],\gamma) \geq DN.$  Since $[s,t] \subset [x,y],$ in particular $d([s,t],\gamma) \geq DN.$  On the other hand, $[s,t]$ is a segment of the $(K,0)$-quasi-geodesic $\phi$ and hence must stay within the $M(K,0)$ neighborhood of $\gamma$.    Combining the inequalities, the corollary follows.
\end{proof}  

We are now ready to prove the equivalence of the contracting and Morse conditions.  We remark that in \cite{sultanmorse}, the first author proved that these two notions are  equivalent, but without the explicit control on the constants.  This control on constants will be essential to our understanding of contracting boundaries.

\begin{theorem}
\label{theorem:equivalence}
Let X be CAT(0) and $\gamma \subset X$ a geodesic.  Then the following are equivalent:
\begin{enumerate}
\item $\gamma$ is $D$--contracting, 
\item $\gamma$ is $M$--Morse
\end{enumerate}
Moreover,  the Morse function $M$ is determined by the constant $D$ and vice versa.   
\end{theorem}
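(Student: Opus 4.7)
The plan is to prove the two implications separately; the second direction carries the new quantitative content.

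For the direction $(1) \Rightarrow (2)$, I would give the standard argument (following Algom-Kfir \cite{algomkfir}). Assume $\gamma$ is $D$-contracting and let $\sigma$ be a $(K,L)$-quasi-geodesic with endpoints on $\gamma$. Suppose some $p \in \sigma$ has $d(p,\gamma)=R$. Starting at $p$ and walking along $\sigma$ toward an endpoint, chain together points $p=p_0,p_1,p_2,\ldots$ with $d(p_i,p_{i+1})$ proportional to the current distance to $\gamma$ (say half of $d(p_i,\gamma)$). Each successive ball $B(p_i, d(p_i,p_{i+1}))$ misses $\gamma$, so by the contracting hypothesis its projection has diameter $<2D$, hence $d(\pi_\gamma(p_i),\pi_\gamma(p_{i+1}))<2D$. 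Combining this with the $(K,L)$-quasi-geodesic inequality relating the parametric distance on $\sigma$ to metric distance yields an explicit $R\le M(K,L)$ in terms of $D,K,L$.

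For $(2) \Rightarrow (1)$ I would apply Corollary \ref{cor:genquasi}. Given $x,y$ with $d(x,y)<d(x,\pi_\gamma(x))$, set $D=d(\pi_\gamma(x),\pi_\gamma(y))$ and define $a,b,c$ as in (\ref{abc}). The hypothesis gives $b<a$; the $1$-Lipschitz property of $\pi_\gamma$ gives $b\ge 1$; and since $\pi_\gamma(x)$ is the nearest point on $\gamma$ to $x$, the chain $aD=d(x,\pi_\gamma(x))\le d(x,\pi_\gamma(y))\le bD+cD$ gives $a\le b+c$, so $c\ge a-b>0$ and $N=(a+c-b)/2>0$. Corollary \ref{cor:genquasi} then yields
\[
D \;\le\; \frac{2\, M\bigl(4(a+b+c),0\bigr)}{a+c-b}.
\]

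To extract a uniform bound $D\le D_0(M)$, I would normalize the configuration by replacing $x$ with a point on $[\pi_\gamma(x),x]$ closer to $\pi_\gamma(x)$, and similarly for $y$; this preserves $D$ and the projections, and the point is to shrink $a$ (resp. $c$) while keeping $b<a$. The CAT(0) projection-angle inequality $d(x,\pi_\gamma(y))^2 \ge d(x,\pi_\gamma(x))^2 + d(\pi_\gamma(x),\pi_\gamma(y))^2$, together with its symmetric version, provides quantitative slack between $a$, $b$ and $c$ (in particular $b+c\ge\sqrt{a^2+1}$). One arranges $a,b,c$ to lie in a bounded range---so that $K=4(a+b+c)$ is bounded and $N$ is bounded away from zero---and reads off $D_0$ from the corollary applied to this normalized configuration. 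Reversing the roles of the two constants gives the ``vice versa'' statement of the theorem.

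The main obstacle is the normalization step: one must show that the shrinking procedure can always be carried out so that the hypothesis $b<a$ survives and the new $(a,b,c)$ land in a region where $M\bigl(4(a+b+c),0\bigr)/(a+c-b)$ is uniformly bounded in terms of $M$ alone. The Pythagorean-type inequality above prevents the dangerous regime where $a+c-b\to 0$ while $a\to\infty$, and supplies the quantitative slack that makes the reduction go through.
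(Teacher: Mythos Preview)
Your overall strategy is exactly the paper's: cite the Morse stability lemma for $(1)\Rightarrow(2)$, and for $(2)\Rightarrow(1)$ apply Corollary~\ref{cor:genquasi} after normalizing the configuration so that $K=4(a+b+c)$ is bounded and $N=(a+c-b)/2$ is bounded below. The paper carries out the normalization concretely as a three-case analysis depending on the sizes of $A=d(x,\pi_\gamma(x))$ and $C=d(y,\pi_\gamma(y))$ relative to $2P$ (their $P$ is your $D$): when both are $\ge 2P$ one slides $x,y$ down their perpendiculars to height $2P$ and uses the Pythagorean inequality on two comparison triangles to bound the new $b$; when $A\ge 2P>C$ one first finds an intermediate point on $[x,y]$ to reduce to the first case; when both are $<2P$ one uses $b+c\ge\sqrt{a^2+1}$ directly to bound $c$ below. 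The explicit bound they obtain is $P\le 80\,M(32,0)$.

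One small correction: after normalizing you do \emph{not} need $b'<a'$ to survive---Corollary~\ref{cor:genquasi} only requires $N>0$, i.e.\ $a'+c'>b'$. (Indeed, in the paper's Case~1 the normalized triple has $a'=c'=2$ and $b'<3.9$, and $b'<a'$ is neither claimed nor needed.) Your sketch is otherwise accurate, and your identification of the Pythagorean projection inequality as the engine behind the normalization is exactly right.
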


\begin{proof}  
The fact that $(1) \implies (2)$ with explicit constants, is the well known ``Morse stability lemma.''  For a proof see for instance \cite{algomkfir} or \cite{sultanmorse}. 

We will prove that $(2) \implies (1)$ with a bound on $D$ determined by $M$.  Fix $x,y \in X$ such that $d(x,y) < d(x, \pi_{\gamma}(x)).$  Set $P = d( \pi_{\gamma}(x)),  \pi_{\gamma}(y))$, 
$$
A = d(x, \pi_{\gamma}(x)), \quad B = d(x,y), \quad C= d(y, \pi_{\gamma}(y)).
$$
 Without loss of generality, 
we may assume $A \geq C$ (if not, reverse the labels on $x$ and $y$), and by assumption, $A > B$.  
We consider three cases.

{\bf Case (1):}  $A \geq C \geq 2P$

Let $x'$ be the point on $[x, \pi_{\gamma}(x)]$ at distance $2P$ from $\pi_{\gamma}(x)$ and let $y'$ 
be the point on $[y, \pi_{\gamma}(y)]$ at distance $2P$ from $\pi_{\gamma}(y)$.  We claim that $B'=d(x',y') \leq 3.9 P$.
To see this, consider the two triangles $\Delta_1 = \Delta(x,  \pi_{\gamma}(x)),  \pi_{\gamma}(y))$ and $\Delta_2 =
\Delta(x,  y,  \pi_{\gamma}(y)))$ and let $\overline\Delta_1$ and $\overline\Delta_2$ denote the comparison triangles in Euclidean space.  

Let $M=d(x,  \pi_{\gamma}(y))$.  The angle of the triangle $\overline\Delta_1$ at the vertex $\overline{ \pi_{\gamma}(x)}$ is at least $\frac{\pi}{2}$, so $M>\sqrt{A^2+P^2} > A>B$.  It follows that the angle in $\overline\Delta_2$ at the vertex $\overline{ \pi_{\gamma}(y)}$ must be less than $\frac{\pi}{2}$.  Now let $z'$ be the point on $[x, \pi_{\gamma}(y)]$ at distance $2P$ from $\pi_{\gamma}(y)$.  An exercise in Euclidean geometry shows that the points $\overline x', \overline z', \overline y'$ corresponding to $x',z',y'$ satisfy $d(\overline x',\overline z') \leq P$ and $d(\overline z',\overline y') \leq 2\sqrt{2} P $.  Hence,
$$d(x',y') \leq d(\overline x',\overline z')+d(\overline z',\overline y') \leq P + 2\sqrt{2} P < 3.9 P.$$

Replacing $x,y$ by $x',y'$ and setting $A' = d(x', \pi_{\gamma}(x)), B' = d(x',y'),  C'= d(y', \pi_{\gamma}(y))$, we now have $A' -B' +C' \geq 2P -3.9P + 2P = 0.1P$ and $A' + B' +C' \leq 8P$, so Corollary \ref{cor:genquasi} guarantees that 
$P  < 20 M(32,0)$.  

\begin{figure}[htpb] 
\centering
\includegraphics[height=6 cm]{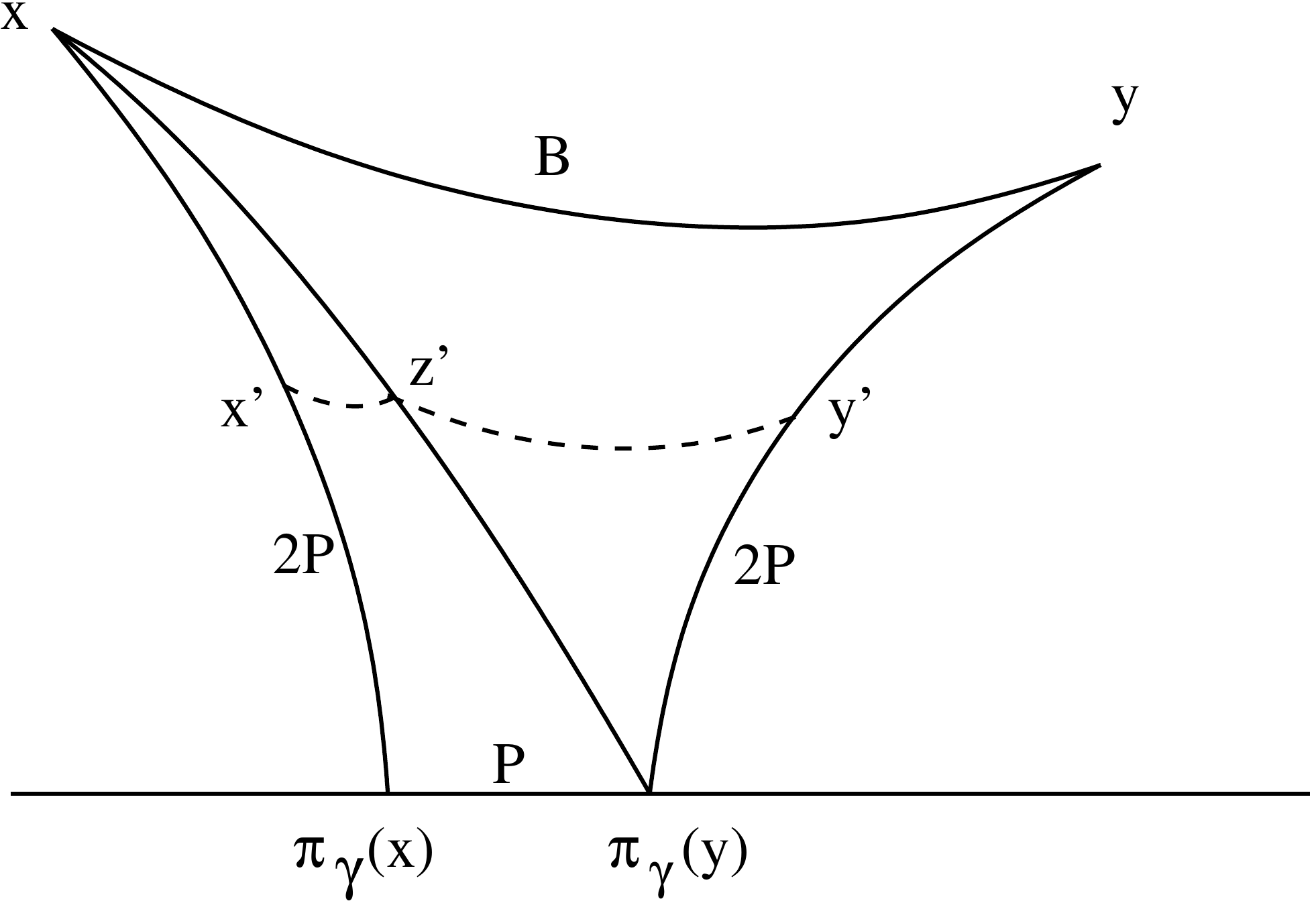}
\caption{Case (1)} \label{fig:equiv-case1}
\end{figure}      

{\bf Caes (2):}  $A \geq 2P >  C$

Consider the function on $[x,y]$ defined by $f(z)=d(z,\pi_{\gamma}(z)) - 2 d(\pi_{\gamma}(x), \pi_{\gamma}(z))$.  
Note that $f(x)= A > 0$ and $f(y)=C-2P < 0$.  The function is continuous, so there exists a point $z$ on $[x,y]$ with
$f(z)=0$.  Setting 
$$P'= d(\pi_{\gamma}(x), \pi_{\gamma}(z)), \quad B'=d(x,z), \quad  C'=d(z,\pi_{\gamma}(z)),$$
  we have $A > B'$ and  $A \geq 2P'=C'$, so 
Case (1) applied to $x,z$ shows that $P'  < 20 M(32,0)$.

Now observe that $A < d(x,z) + d(z, \pi_{\gamma}(z)) + d(\pi_{\gamma}(z),\pi_{\gamma}(x)) = B' +  3P'$,
so $ d(z,y) = B-B' < A-B' < 3P'$.  Hence,
\begin{eqnarray*}
P &=&  d(\pi_{\gamma}(x), \pi_{\gamma}(z)) + d(\pi_{\gamma}(z), \pi_{\gamma}(y))\\
 & \leq & d(\pi_{\gamma}(x), \pi_{\gamma}(z)) + d(z,y) \\
 & < &  4P' < 80 M(32,0).
\end{eqnarray*} 


{\bf Case (3):}  $ 2P > A \geq  C$

In this case we have $A+B+C \leq 3A  < 6P$,  and $A-B+C >C$,  so we need 
only show that  $C$ is bounded below.  Let $M = d(x, \pi_{\gamma}(y))$ as in Case (1).  Then 
$$\sqrt{A^2 + P^2} \leq M  \leq B+C \leq A+C.$$ 
Letting $a=A/P$ and $c=C/P$, this inequality can be rewritten as $c \geq \sqrt{a^2 + 1} - a$.  
The function  $\rho(x)= \sqrt{x^2 + 1} - x$ is a decreasing function so $c \geq \rho(a) \geq \rho(2) = \sqrt{5} -2 > 0.2$.  By Corollary \ref{cor:genquasi}, we conclude that $P < 10 M(24,0)$.

Setting $D=80M(32,0)$, we conclude that $P=  d( \pi_{\gamma}(x)),  \pi_{\gamma}(y))< D$ in all three cases.
\end{proof}

This theorem has an important corollary.

\begin{corollary}\label{cor:DtoD'}  Let $X,Y$ be CAT(0) spaces with $Y$ complete, and let  $f : X \to Y$ be a $(\lambda,\epsilon)$-quasi-isometry.  Then for any $D$-contracting geodesic ray $\gamma$ in $X$ based at $x_0$, $f \circ \gamma$ stays bounded distance from a unique geodesic ray $\beta$ based at $f(x_0)$.  Moreover, $\beta$ is $D'$-contracting where $D'$ depends  only on $\lambda,\epsilon$ and $D$.
\end{corollary}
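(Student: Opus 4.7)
The plan is to chain together the equivalence of Theorem~\ref{theorem:equivalence} with the transport properties of Lemma~\ref{lem:constants}, inserting one explicit CAT(0) convergence argument in the middle to produce the ray $\beta$.

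First, I would apply Theorem~\ref{theorem:equivalence} to convert the $D$-contracting hypothesis on $\gamma$ into an $M$-Morse property, where $M$ is determined by $D$ alone. Lemma~\ref{lem:constants}(2) then gives that $f\circ\gamma$ is an $M''$-Morse quasi-geodesic in $Y$, with $M''$ depending only on $\lambda,\epsilon,M$ (and hence only on $\lambda,\epsilon,D$).

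The heart of the argument is producing the geodesic ray $\beta$ at bounded distance from $f\circ\gamma$. For each $n$, set $L_n=d(f(x_0),f(\gamma(n)))$ and let $\beta_n\co[0,L_n]\to Y$ be the geodesic from $f(x_0)$ to $f(\gamma(n))$; note $L_n\to\infty$ by the quasi-isometry inequality. Lemma~\ref{lem:constants}(3) applied to the $M''$-Morse quasi-geodesic $f\circ\gamma$ shows that each $\beta_n$ has Hausdorff distance at most some constant $C=C(\lambda,\epsilon,M'')$ from $f\circ\gamma\vert_{[0,n]}$. For $n<m$, this provides a point $\beta_m(s_m)$ within $C$ of $\beta_n(L_n)=f(\gamma(n))$, and the triangle inequality gives $|s_m-L_n|\le C$. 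Since $\beta_n$ and $\beta_m\vert_{[0,s_m]}$ share basepoint $f(x_0)$, the CAT(0) convexity inequality [C3] of Lemma~\ref{lem:cat} yields $d(\beta_n(t),\beta_m((t/L_n)s_m))\le tC/L_n$, and then a further triangle inequality bounds $d(\beta_n(t),\beta_m(t))$ by $2tC/L_n$. Thus for each fixed $t$ the sequence $\beta_n(t)$ is Cauchy, and by completeness of $Y$ it converges to a point $\beta(t)$. Passing to the limit verifies that $\beta$ is an isometric embedding of $[0,\infty)$ with $\beta(0)=f(x_0)$, and a further limiting argument shows $\beta$ lies within Hausdorff distance at most $C$ (or a fixed multiple thereof) from $f\circ\gamma$. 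Uniqueness of $\beta$ is immediate from [C3]: two geodesic rays with common basepoint whose distance stays bounded must coincide.

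Once $\beta$ is in hand, Lemma~\ref{lem:constants}(1) gives that $\beta$, being a geodesic at bounded Hausdorff distance from the $M''$-Morse quasi-geodesic $f\circ\gamma$, is itself $M'''$-Morse, with $M'''$ depending only on $M''$ and $C$. Applying Theorem~\ref{theorem:equivalence} in the reverse direction produces a contracting constant $D'$ determined by $M'''$; tracking dependencies back through the chain, $D'$ depends only on $\lambda,\epsilon,D$, as desired.

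The main obstacle is the construction of $\beta$ itself: without properness of $Y$, the Arzel\`a--Ascoli shortcut is unavailable, and one must instead extract convergence directly from CAT(0) convexity as sketched above. The reason the argument works is precisely the ``moreover'' clause of Theorem~\ref{theorem:equivalence}, which ensures that the constants in Lemma~\ref{lem:constants} and in the Morse-to-contracting conversion at the end can be controlled uniformly in terms of $\lambda,\epsilon,D$.
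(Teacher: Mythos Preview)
Your proposal is correct and follows essentially the same approach as the paper: convert $D$-contracting to $M$-Morse via Theorem~\ref{theorem:equivalence}, push forward the Morse property via Lemma~\ref{lem:constants}(2), use Lemma~\ref{lem:constants}(3) to get uniform Hausdorff bounds on the approximating segments $\beta_n$, extract a limiting ray $\beta$ from CAT(0) convexity and completeness, and then convert back via Lemma~\ref{lem:constants}(1) and Theorem~\ref{theorem:equivalence}. Your convergence step is simply a more explicit version of what the paper compresses into the phrase ``the CAT(0) thinness condition and the completeness of $Y$ then guarantee that the sequence $(\beta_n)$ converges.''
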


\begin{proof}   By Theorem \ref{theorem:equivalence}, $\gamma$ is $M$-Morse, with $M$ depending only on $D$.  By Lemma \ref{lem:constants}(2), $f \circ \gamma$ is $M''$-Morse with $M''$ depending only on $\lambda,\epsilon$ and $M$.  Let  $\beta_n$ be  the geodesic segment  from $f(x_0)=f(\gamma(0))$ to $f(\gamma(n))$, $n \in \N$.  Since $f \circ \gamma$ is a $(\lambda, \epsilon)$-quasi-geodesic,  Lemma \ref{lem:constants}(3) implies that there exists $C$, depending only on  $M'',\lambda,\epsilon$, such that each $\beta_n$ lies Hausdorff distance at most  $C$ from $f\circ \gamma|_{[0,n]}$.   Thus, restricted to $[0,n]$,  the segments $\beta_m, m\geq n$ all lie Hausdorff distance at most $2C$ from each other.  The CAT(0) thinness condition and the completeness of $Y$ then guarantee that the sequence $(\beta_n)$ converges to a unique geodesic ray $\beta$ lying Hausdorff distance at most $C$ from $f\circ \gamma$.  By Lemma  \ref{lem:constants}(1), it follows that  $\beta$ is $M'$-Morse where $M'$ depends only on $M'', C$.  Finally, applying Theorem \ref{theorem:equivalence} once again, we conclude that $\beta$ is $D'$ contracting where $D'$ depends only on $M'$.
\end{proof}

\subsection{Thin triangle conditions}

In addition to the Morse condition, there are several other conditions equivalent to the contracting property.
These illustrate the principle that contracting geodesics behave like hyperbolic geodesics and will play a role in applications which follow later in this paper.  The first of these properties is a thin triangle condition.  
 
Let $\alpha$ be a geodesic.  We say $\alpha$ satisfies  \emph{thin triangle condition (i)} if there exists $\delta$ such that for all $x \in X,y \in \alpha$, we have
$$d(\pi_{\alpha}(x), [x,y]) < \delta.$$  
We say $\alpha$ satisfies \emph{thin triangle condition (ii)} if there exists $\delta$ such that for all geodesic triangles $\Delta(x,y,z)$ with $[y,z] \subset \alpha$, every point $w \in [y,z]$ satisfies 
$$d(w, [x,y] \cup[x,z]) < \delta.$$   

\begin{lemma}  
\label{lem:thinequivalent} The two thin triangle conditions are equivalent (though the $\delta$ required may be different).
\end{lemma}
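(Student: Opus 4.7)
The plan is to prove the two implications separately, both resting on continuity and convexity properties of the nearest point projection $\pi_{\alpha}$ recorded in Lemma \ref{lem:cat}.

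For (i) $\Rightarrow$ (ii), I would set $p = \pi_{\alpha}(x)$ and observe first the purely combinatorial fact that $[y,z] \subseteq [p,y] \cup [p,z]$ inside $\alpha$, regardless of whether $p$ lies inside $[y,z]$ or beyond one of its endpoints. By [C2] the map $\pi_{\alpha}\co [x,y] \to \alpha$ is continuous, and its endpoint values are $p$ and $y$; hence by an intermediate value argument every point $w \in [p,y]$ equals $\pi_{\alpha}(m)$ for some $m \in [x,y]$, and symmetrically for $[p,z]$. With such an $m$ in hand, applying (i) to the pair $(m,y)$ (note $y \in \alpha$) yields $d(w,[m,y]) < \delta$, and since $[m,y] \subset [x,y]$, condition (ii) follows with the same constant $\delta$.

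For (ii) $\Rightarrow$ (i), I would again set $p = \pi_{\alpha}(x)$, assume $p \neq y$ (else there is nothing to prove), and apply (ii) to the triangle $\Delta(x,y,p)$ whose side $[y,p]$ lies on $\alpha$. The function $g(w) = d(w,[x,p]) - d(w,[x,y])$ is continuous on $[y,p]$, nonnegative at $y$, and nonpositive at $p$, so there is an intermediate point $w^* \in [y,p]$ with $d(w^*,[x,y]) = d(w^*,[x,p])$; by (ii) this common value is less than $\delta$. Choose $q \in [x,y]$ and $u \in [x,p]$ each within $\delta$ of $w^*$. The key geometric input is that the projection fiber $\pi_{\alpha}^{-1}(p)$ is convex in any CAT(0) space, so the entire segment $[x,p]$ projects to $p$; the $1$-Lipschitz property of $\pi_{\alpha}$ then forces $d(p,w^*) \leq d(u,w^*) < \delta$, and the triangle inequality gives $d(p,q) < 2\delta$, yielding (i) with constant $2\delta$.

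The main obstacle is this second direction: condition (ii) applied directly to $\Delta(x,y,p)$ gives no information at the vertex $p$ itself, and for points near $p$ on $[y,p]$ the approximation supplied by (ii) would a priori always default to $[x,p]$, telling us nothing useful about $[x,y]$. The resolution has two complementary ingredients: the equidistant intermediate point $w^*$, which forces simultaneous approximation from both sides, and the convexity of the projection fiber $\pi_{\alpha}^{-1}(p)$, which converts proximity to $[x,p]$ along $\alpha$ into proximity to $p$ itself. This convexity is a standard CAT(0) consequence of the triangle inequality and the uniqueness of the nearest point projection: for $u \in [x,p]$ one has $d(x,\pi_{\alpha}(u)) \leq d(x,u) + d(u,p) = d(x,p)$, forcing equality and hence $\pi_{\alpha}(u) = p$.
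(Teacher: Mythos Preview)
Your argument is correct; both directions go through as written. The route differs from the paper's in interesting ways.

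For (i) $\Rightarrow$ (ii), the paper is more direct: once $q \in [x,y]$ is chosen with $d(p,q) < \delta$, convexity of the metric (property [C3]) applied to the geodesics $[p,y]$ and $[q,y]$ immediately gives $d(w,[x,y]) < \delta$ for every $w \in [p,y]$. Your IVT-on-$\pi_\alpha$ argument, finding a preimage $m$ and reapplying (i) to $(m,y)$, is a valid alternative but slightly more circuitous.

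For (ii) $\Rightarrow$ (i), both arguments rest on the same geometric fact---that the whole segment $[x,p]$ projects to $p$---but exploit it differently. The paper picks a specific $w \in [p,y]$ with $\delta < d(w,p) < 2\delta$, uses the $\geq \pi/2$ angle at $p$ to deduce $d(w,[x,p]) = d(w,p) > \delta$, so (ii) forces $d(w,[x,y]) < \delta$ and the triangle inequality yields (i) with constant $3\delta$. Your equidistant point $w^*$ found by IVT, combined with the $1$-Lipschitz property of $\pi_\alpha$, is a clean way to package the same content; it avoids the implicit case split on whether $d(y,p) > \delta$ and gives the slightly better constant $2\delta$.
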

\begin{proof}  Thin condition (i) implies thin condition (ii):  Let $x \in X$ and $[y,z] \subset \gamma$.  Applying condition (i) to $x,y$ gives $d(\pi_{\gamma}(x),[x,y]) < \delta.$  The CAT(0) thinness condition then implies that $d(w, [x,y]) < \delta$ for all $w \in [\pi_{\gamma}(x),y]$.  The same argument applied to $x,z$ shows that $d(w, [x,z]) < \delta$ for all $w \in [\pi_{\gamma}(x),z]$.

Thin condition (ii) implies thin condition (i):  Set $z=\pi_{\gamma}(x)$.  The Euclidean comparison triangle for $\Delta(x,y,z)$ has angle at least $\frac{\pi}{2}$ at $z$, so for any $w \in [z,y]$, $d(w, [x,z])=d(w,z)$.  In particular, choosing $w$ such that $\delta < d(w,z)< 2\delta$,   condition (ii) implies that  $d(w, [x,y]) < \delta$, so $d(z,[x,y]) < 3\delta$.
\end{proof}

\begin{definition}[slim geodesic] \label{def:slim}A geodesic $\gamma$ is said to be \emph{$\delta$--slim} if $\gamma$ satisfies thin triangle condition (i) with the constant $\delta.$  When the constant $\delta$ is not relevant we will omit it from the notation and simply say $\gamma$ is \emph{slim}.  Note that by Lemma \ref{lem:thinequivalent}, $\gamma$ is slim if and only if $\gamma$ satisfies thin triangle condition (ii).
\end{definition}   
 
Thin triangle condition (i) is used, for example by Bestvina-Fujiwara in \cite{bestvinafujiwara} where it is shown that 
if  $\gamma$ is a $D$--contracting geodesic in a CAT(0) space,  then $\gamma$ is $(3D+1)$-slim. As we will see below, the converse is also true, that is,  slimness implies contracting.

\subsection{Lower divergence} 
 
The last notion of hyperbolic type is based on a variation of divergence.
\begin{definition}[lower divergence]  Let $\gamma$ be a quasi-geodesic.  For any $t>r>0$, let $\rho_{\gamma}(r,t)$ denote the infimum of the lengths of all paths from $\gamma(t-r)$ to $\gamma(t+r)$ which lie outside the open ball of radius $r$ about $\gamma(t)$. Define the \emph{lower divergence} of $\gamma$ to be the growth rate of the following function:
\[
ldiv_\gamma(r) = \inf_{t > r} \rho_{\gamma}(r,t).
\] 
 \end{definition}

The key difference between lower divergence and the more standard notion of divergence (see for example \cite{drutumozessapir}) is that in the standard notion, one considers only balls with some fixed center  $\gamma(t)$, whereas for lower divergence, we allow the center to vary over all of $\gamma$.   This flexibility is essential for working with geodesic rays (as opposed to bi-infinite geodesic lines), but even in the case of geodesics lines, the two notions are different.  Consider, for example, two flats joined at a single point $x$. Any geodesic line passing through $x$ will have infinite divergence, but linear lower divergence.  Or consider the space $X$ formed by slitting open the Poincar\'e disc  along the positive half of a geodesic $\alpha$ and inserting a Euclidean sector (Figure \ref{fig:pacman}).  The image of $\alpha$ in $X$ has exponential divergence but linear lower divergence.  In the case of a periodic geodesic, however, the two notions are equivalent.

\begin{figure}[htpb] 
\centering
\includegraphics[height=5 cm]{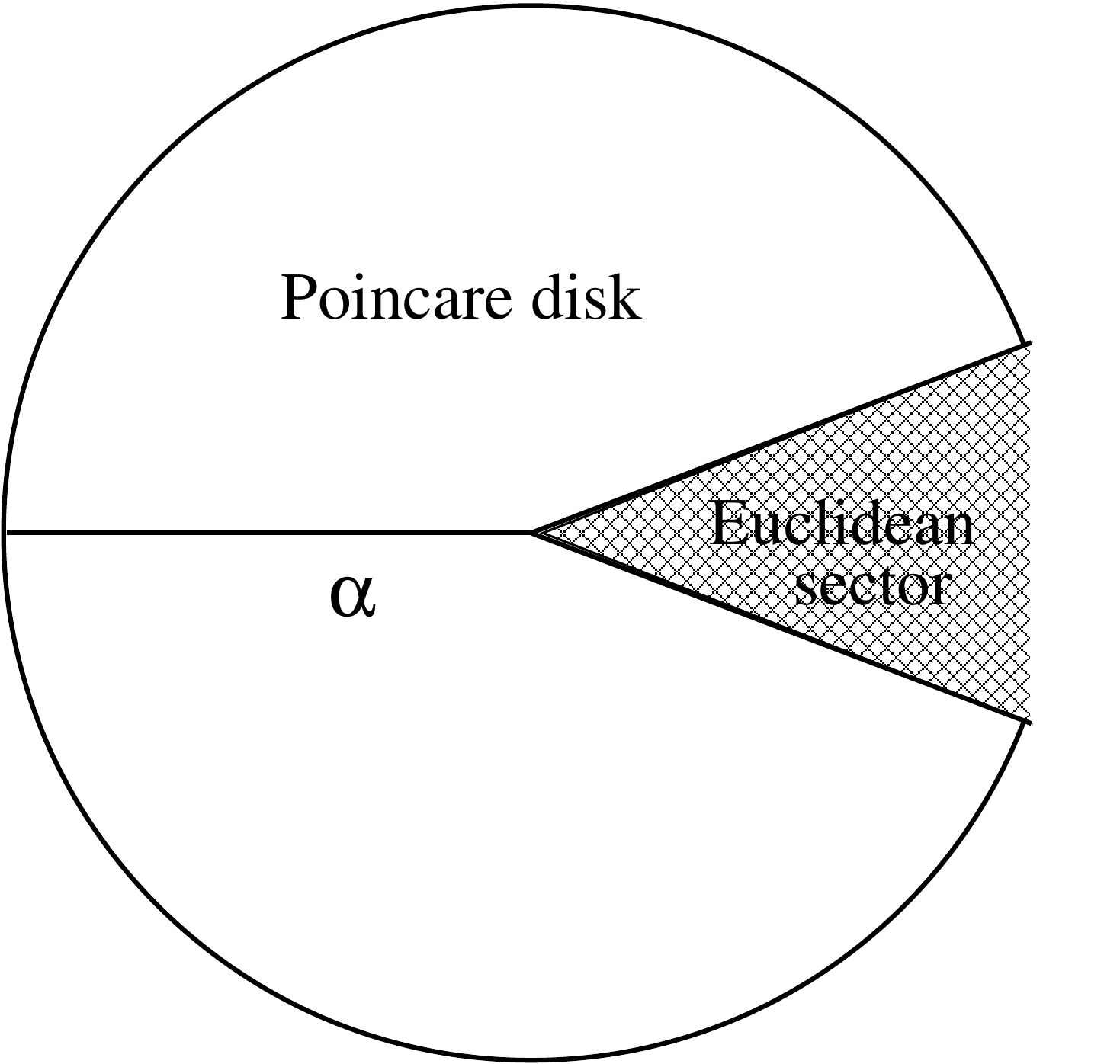}
\caption{divergence vs. lower divergence} \label{fig:pacman}
\end{figure}      

We now show that all of these notions are equivalent.
 
\begin{theorem} \label{theorem:equivalenceexpanded}  Let $X$ be a CAT(0) space and 
let $\alpha \subset X$ be a geodesic ray or line.  Then the following are equivalent:
\begin{enumerate}
\item  $\alpha$ is contracting, 
\item  $\alpha$ is Morse,
\item  $\alpha$ is slim,
\item  $\alpha$ has superlinear lower divergence, 
\item  $\alpha$ has at least quadratic lower divergence.

\end{enumerate}
\end{theorem}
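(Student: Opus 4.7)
The plan is to prove the equivalences by closing a cycle of implications anchored at Theorem \ref{theorem:equivalence}, which gives $(1) \Leftrightarrow (2)$. The remaining work is to show $(1) \Leftrightarrow (3)$ and to establish the chain $(1) \Rightarrow (5) \Rightarrow (4) \Rightarrow (1)$. The step $(5) \Rightarrow (4)$ is immediate, since at-least-quadratic growth is superlinear.

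For $(1) \Rightarrow (3)$ I will use the Bestvina--Fujiwara argument cited in the text: if $\alpha$ is $D$-contracting and $y \in \alpha$, then $\pi_\alpha$ restricted to $[x,y]$ starts at $\pi_\alpha(x)$ and ends at $y$, and contracting bounds the projection jump across any ball disjoint from $\alpha$, which forces the geodesic $[x,y]$ to pass within $3D+1$ of $\pi_\alpha(x)$. For $(3) \Rightarrow (1)$, rather than target contracting directly, I will show slim implies Morse and then appeal to Theorem \ref{theorem:equivalence}. Given a $(K,L)$-quasi-geodesic $\sigma$ with endpoints on $\alpha$, a standard adaptation of the Morse Lemma proof (compare Bridson--Haefliger III.H Theorem 1.7) uses iterated applications of thin triangle condition (ii) from Lemma \ref{lem:thinequivalent}, specialized to triangles with one side on $\alpha$, to bound the distance from $\sigma$ to $\alpha$ in terms of $K$, $L$, and $\delta$.

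For $(1) \Rightarrow (5)$, assume without loss of generality that $D \geq 1$. Let $c$ be a path of length $L$ from $\alpha(t-r)$ to $\alpha(t+r)$ avoiding $B(\alpha(t), r)$. Partition the parameter interval into the \emph{low} subset, where $d(c(s), \alpha) < r/(8D)$, and the \emph{high} subset, where $d(c(s), \alpha) \geq r/(8D)$. For $w$ in the low subset,
\[
d(\pi_\alpha(w), \alpha(t)) \geq d(w, \alpha(t)) - d(w, \pi_\alpha(w)) > r - r/(8D) \geq 7r/8,
\]
so no low-part projection lands in the middle subsegment $(\alpha(t-r/2), \alpha(t+r/2))$ of length $r$. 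Since $\pi_\alpha \circ c$ is continuous with endpoints $\alpha(t \pm r)$, its image contains this middle subsegment, which must therefore be covered by the projection of the high subset. Subdividing the high subset into pieces of arc length $r/(16D)$, each piece sits in a ball of radius $r/(16D)$ centered at a point at distance $\geq r/(8D)$ from $\alpha$; such a ball is disjoint from $\alpha$, so by the contracting property each piece projects to a segment of length at most $2D$. Covering a segment of length $r$ requires at least $r/(2D)$ pieces, so $L \geq (r/(2D)) \cdot (r/(16D)) = r^2/(32D^2)$, yielding at least quadratic lower divergence.

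The main obstacle is $(4) \Rightarrow (1)$: the rigidity statement that superlinear lower divergence forces the uniform contracting condition. I will argue by contrapositive, showing that if $\alpha$ is not $D$-contracting for any $D$, then $ldiv_\alpha(r)$ is bounded above by a linear function of $r$. Failure of contracting furnishes, for every target $L$, witnesses $x, y$ with $d(x,y) < d(x, \pi_\alpha(x))$ and $d(\pi_\alpha(x), \pi_\alpha(y)) = L$. For each sufficiently large $r$ I set $L = 2r$ and build a detour from $\alpha(t-r)$ to $\alpha(t+r)$, with $\alpha(t)$ the midpoint of $[\pi_\alpha(x), \pi_\alpha(y)]$, as the concatenation $[\alpha(t-r), x] \cup [x, y] \cup [y, \alpha(t+r)]$. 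Convexity of the projection fibers together with the CAT(0) Pythagorean-type inequality
\[
d(w, \alpha(t))^2 \geq d(w, \pi_\alpha(w))^2 + d(\pi_\alpha(w), \alpha(t))^2
\]
ensures the two vertical segments stay at distance at least $r$ from $\alpha(t)$. The delicate step is to verify that $[x,y]$ also avoids $B(\alpha(t), r)$ and that the total length stays $O(r)$; since the witnesses may a priori have $d(x, \alpha)$ much larger than $r$, one must descend to witnesses of height comparable to $r$, exploiting convexity of $d(\cdot, \alpha)$ along CAT(0) geodesics. This replacement step is the main technical hurdle of the proof.
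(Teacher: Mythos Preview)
Your argument for $(1) \Rightarrow (5)$ via the low/high partition of the detour is correct and is a clean alternative to the paper's iterative construction. Your route $(3) \Rightarrow (2) \Rightarrow (1)$ is more circuitous than necessary and the step ``slim $\Rightarrow$ Morse'' is underspecified: the standard Morse-lemma argument uses thinness of \emph{all} triangles, whereas slimness of $\alpha$ only controls triangles with one side on $\alpha$, so the adaptation needs to be spelled out. The paper instead gives a direct five-line proof of $(3) \Rightarrow (1)$, showing that a $\delta$-slim geodesic is $3\delta$-contracting.

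The genuine gap is in your $(4) \Rightarrow (1)$. The proposed detour $[\pi_\alpha(x), x] \cup [x,y] \cup [y, \pi_\alpha(y)]$ has two problems you name but do not resolve. First, length: since projections are $1$-Lipschitz, $d(x,y) \geq d(\pi_\alpha(x),\pi_\alpha(y)) = 2r$, so $A := d(x,\alpha) > d(x,y) \geq 2r$; but nothing bounds $A$ in terms of $r$, and the detour has length of order $A$, not $r$. Second, and more seriously: even after descending to $x'$ on $[x,\pi_\alpha(x)]$ and $y'$ on $[y,\pi_\alpha(y)]$ at height comparable to $r$, the segment $[x',y']$ need not avoid $B(\alpha(t),r)$. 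Convexity of $s \mapsto d(\gamma(s), \alpha(t))$ along a CAT(0) geodesic $\gamma$ gives only an \emph{upper} bound on interior values from the endpoint values; the geodesic $[x',y']$ can dip toward $\alpha$. Your Pythagorean inequality handles the two vertical legs (where $\pi_\alpha$ is constant), but gives nothing for the middle leg, where $\pi_\alpha$ varies. The ``replacement step'' you flag as the main hurdle is in fact the whole argument, and convexity of $d(\cdot,\alpha)$ does not supply it.

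The paper circumvents this by proving $(4) \Rightarrow (3)$ instead of $(4) \Rightarrow (1)$. Failure of thin-triangle condition (ii) produces, for each $r$, a triangle $\Delta(x,y,z)$ with $[y,z] \subset \alpha$ and a point $w \in [y,z]$ with $d(w, [x,y] \cup [x,z]) = r$ (after sliding $x$ toward $[y,z]$). The detour from $\alpha(t-r)$ to $\alpha(t+r)$ is then assembled from short connectors to $[x,y]$ and $[x,z]$ and subsegments thereof, all of which automatically live at scale $r$ because $r$ is \emph{defined} as the distance from $w$ to the opposite sides; one gets length $\leq 14r$. The conceptual point is that failure of slimness hands you a configuration whose intrinsic scale is $r$, whereas failure of contracting hands you a configuration whose scale (the height $A$) is decoupled from the projection width $2r$. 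Routing through (3) is what makes the contrapositive tractable.
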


\begin{example}[Teichm\"uller space]  
Recall that Teichm\"uller space equipped with the WP metric is CAT(0).  Using Theorem \ref{theorem:equivalence}, we can characterize all contracting quasi-geodesics in the space.  Considering the literature, it is apparent that the study of contracting geodesics in Teichm\"uller space is of utmost interest, both for identifying interesting phenomena among geodesics and for enhancing understanding of the space as a whole.

In \cite{mm2} a 2-transitive family of quasi-geodesics in Teichm\"uller space called hierarchy paths are introduced.  In \cite{sultanthesis} it is shown that a hierarchy path is Morse if and only if there is a uniform bound on the distance traveled in all component domains whose complement in the surface contains a connected essential subsurface with complexity at least one (or equivalently contains a connected subsurface such that the Teichm\"uller space of the subsurface is nontrivial).  It follows that a geodesic is similarly Morse if and only if there is a uniform bound on the subsurface projection distance to any essential subsurface whose complement in the surface contains a connected essential subsurface of complexity at least one.  In light of Theorem \ref{theorem:equivalenceexpanded}, the aforementioned characterization of Morse geodesics provides an equivalent characterization for contracting geodesics, as well as each of the hyperbolic type geodesics considered in Theorem \ref{theorem:equivalenceexpanded}.  It should be noted that for the once punctured torus or the four times punctured sphere, contracting geodesics are precisely geodesics with so called \emph{bounded geometry} studied in \cite{bmm2}.  More generally, for larger surfaces, the family of contracting geodesics  includes the family of geodesic with bounded geometry as a proper subset.
\end{example}

\begin{proof}[Proof of Theorem \ref{theorem:equivalenceexpanded}] We have already proved the equivalence of (1) and (2).  We will prove the equivalence of
$(1), (3), (4), (5)$. Note that  $(5) \implies (4)$ is obvious.

\medskip
\noindent  $(4) \implies (3)$. 
  Suppose thin triangle condition (ii) fails.  Then for any $r>0$, there exists 
a triangle $\Delta(x,y,z)$ with $[y,z] \subset \alpha$, and a point $w \in [y,z]$ such that $d(w, [x,y] \cup[x,z]) \geq r$.  Moving $x$ closer to $[y,z]$ if necessary, we may assume that 
$d(w, [x,y] \cup[x,z]) = r$.  Moreover, we may choose $w$ so that $d(w, [x,y]) =(w, [x,z] )=r$.  See Figure \ref{fig:4to3}.

\begin{figure}[htpb] 
\centering
\includegraphics[height=6 cm]{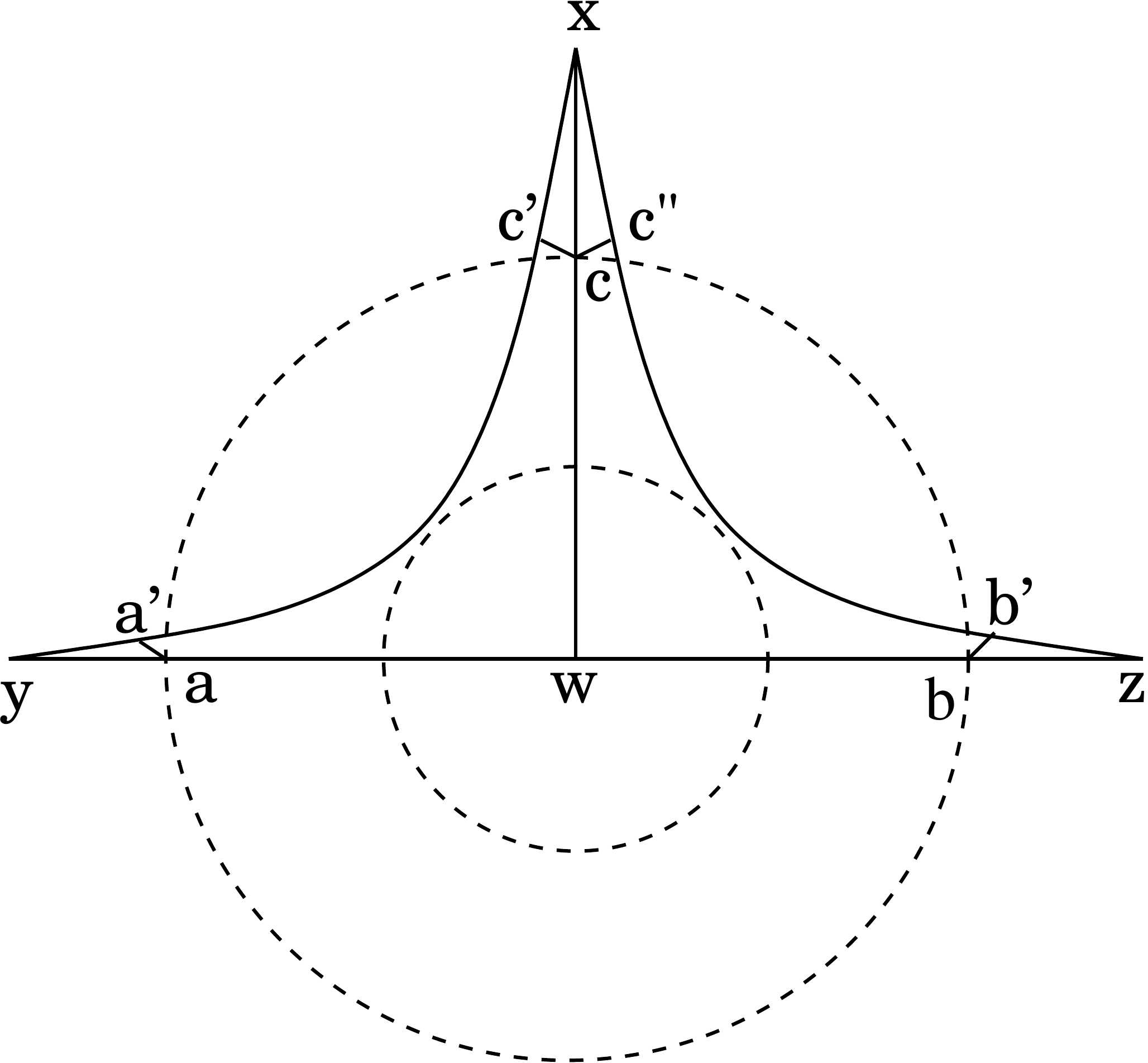}
\caption{$(4) \implies (3)$} \label{fig:4to3}
\end{figure}

 Let $a \in [y,w]$ be the point at distance $2r$ from $w$ (or if $d(w,y) < 2r$, take $a_1=y$).  
 Let $a'$ be the projection of $a$ on $[x,y]$. It follows from the convexity of the metric that  
$ d(a,a')=d(a,[x,y]) \leq d(w, [x,y])=r$.  Likewise, if  $b \in [z,w]$ is the point at distance $2r$ from $w$ (or 
$b=z$ if  $d(w,z) < 2r$), and $b' $ is its projection on $[x,z]$, then $d(b,b') \leq r$.

Finally, let $c \in [x,w]$ be the point at distance $2r$ from $w$ (or 
$c=x$ if  $d(w,x) < 2r$). Then the projections $c'$ and $c''$ on on $[x,z]$ and $[x,y]$ satisfy 
$d(c,c') \leq r$ and $d(c,c') \leq r$.  

Now say $w=\alpha(t)$.  Then $w_1=\alpha(t-r)$ and $w_2=\alpha(t+r)$ both lie in $[a,b]$. Consider the path $\gamma$ from $w_1$ to $w_2$ formed by the segments
$$ [w_1,a] \cup [a,a'] \cup [a',c'] \cup [c',c] \cup [c,c''] \cup [c'',b'] \cup [b',b] \cup [b,w_2]. $$
By construction, this path lies outside the open ball of radius $r$ about $w$.  To compute the length of $\gamma$, note that since $a',c'$ are the projections of $a,c$ on $[x,y]$,
$d(a',c') \leq d(a,c) \leq d(x,w)+d(w,c) \leq 4r$, and likewise $d(b',c'') \leq 4r$.
Thus adding the lengths of all the segments in $\gamma$ gives $|\gamma| \leq 14r$.
It follows that $\rho_\alpha(r,t) \leq 14r$, contradicting our assumption that  $ldiv_\alpha$ is super-linear.
 
 \medskip
\noindent  $(3) \implies (1)$.  Assume $\alpha$ satisfies the thin triangle condition (i) with constant $\delta$.  Let $B=B(x,r)$ be a ball in $X$ not intersecting $\alpha$.  Let 
$y \in B$ and let $x',y'$ denote the projections of $x,y$ on $\alpha$. Set $A=d(x,x')$  and note that $A=d(x,\alpha) \geq r$.  By the thin triangle condition (applied at $y$), 
there exists a point $z \in [x',y]$ such that  $d(y',z) < \delta$.  By the CAT(0) condition,
$d(x,z) \leq max\{d(x,x'),d(x,y)\}=A$, so 
$$d(x,y') \leq d(x,z)+d(z,y') < A+\delta.$$ 

Now applying the thin triangle condition at $x$, we have $d(x',[x,y']) \leq \delta$, hence if $w$ is the projection of $x'$ on $[x,y']$, then
$$ d(x,y') = d(x,w) + d(w,y') \geq d(x,x') + d(x',y') - 2\delta = d(x',y') + A -2\delta. $$
Combining these two inequalities gives $A+\delta > d(x',y') + A -2\delta$, hence $d(x',y') < 3\delta$.
Since $y$ was an arbitrary element of the ball $B$, we conclude that the projection of $B$ on $\alpha$ has diameter at most $6\delta$. 

\medskip
\noindent  $(1) \implies (5)$.  Suppose $\alpha$ is $D$-contracting. Let $x=\alpha(t)$ with $t > r > D$.  
Set $y_1=\alpha(t-r)$ and $y_2=\alpha(t+r)$.  Suppose $\beta$ is a path from $y_1$ to $y_2$ lying outside the ball $B(x,r)$.  For any point $w \in \beta$, let   $\beta_{w}$ denote the sub-path of $\beta$ from $w$ to $y_2$. 

Note that the projection of $\beta$ on $\alpha$ contains the interval $[y_1,y_2]$. In particular, there exists $z \in \beta$ that projects to $x$.  Let $z_1$ be a point in $\beta_z$ at distance $r$ from $z$ and let $x_1$ be its projection on $\alpha$.  Since $d(z,z_1) =r \leq d(z,x)$, 
the $D$-contracting hypothesis implies the $d(x,x_1) \leq D$ and hence  $d(x_1,\beta) \geq d(x,\beta)-D \geq r-D$.

Now repeat this process starting at $x_1$.  That is, choose $z_2 \in \beta_{z_1}$ at distance $r-D$ from $z_1$ and let $x_2$ be its projection on $\alpha$.   Since $d(z_1,z_2) =r-D \leq d(z_1,x_1)$,  the $D$-contracting hypothesis implies that $d(x_1,x_2) \leq D$ and $d(x_2, \beta) \geq r-2D$.
We can repeat this process $k=\lfloor \frac{r}{D} \rfloor < \frac{r}{D}-1$ times to get a sequence of points $z=z_0, z_1,z_2, \dots ,z_k$ on $\beta$ satisfying
$$ |\beta| \geq \sum_{i=0}^{k-1} d(z_i,z_{i+1})  = \sum_{i=0}^{k-1} (r-iD) = kr-\frac{k(k-1)}{2}D 
\geq \frac{r^2}{2D} -D. $$
We conclude that $ldiv_{\alpha}$ is at least quadratic.
 \end{proof}

 \section{The Contracting Boundary}\label{sec:contractingboundary}
 
 In this section we introduce the contracting boundary of a CAT(0) space $X$.  First, we recall the definition of the visual boundary and some basic properties.  For more details, see  \cite{bridsonhaefliger}, Section II.8.
\emph{We assume from now on that $X$ is complete.}

Two geodesic rays $\gamma,\gamma': [0, \infty) \ra X$ are said to be \emph{asymptotic} if there exists a constant $K$ such that $d(\gamma(t), \gamma'(t)) < K$ for all $t > 0,$ or equivalently, if they have bounded Hausdorff distance.  It is immediate that being asymptotic gives an equivalence relation on rays.  The \emph{visual boundary} of $X$, denoted $\partial X,$ is defined as the set of equivalence classes of geodesic rays.   The equivalence class of a geodesic ray $\gamma$ will be denoted $\gamma(\infty).$

It is an elementary fact that for $X$ a complete CAT(0) space and $x_0 \in X$ a fixed base point, every equivalence class can be represented by a unique geodesic ray emanating from $x_0$.  
One natural topology on  $\partial X$ is the \emph{cone topology}.  In this topology, a neighborhood basis for $\gamma(\infty)$ is given by all open sets of the form: 
$$
 U(\gamma,r,\epsilon) = \{ \alpha(\infty) \in \partial X \mid \alpha \mbox{ is a geodesic ray at $x_0$ and}\; \forall t<r, d(\alpha(t),\gamma(t)) <\epsilon \}
$$
In other words, two geodesic rays are close together in the cone topology if they have representatives starting at the same point which stay close (are at most $\epsilon$ apart) for a long time (at least $r$).  It is easy to verify that this topology is independent of choice of base point.  Moreover, if $X$ is  proper (i.e., closed balls in $X$ are compact), then $\partial X$ is compact.  
 
It follows from Lemma \ref{lem:constants} and Theorem \ref{theorem:equivalence} that if  $\alpha$ and $\beta$ are asymptotic geodesics, then $\alpha$ is contracting if and only if $\beta$ is contracting.  A more elementary proof of this fact can be found in \cite{bestvinafujiwara}, where the following is proved.

 \begin{lemma}\cite[Lemma 3.8] {bestvinafujiwara} \label{stability}  There exists a constant $D'$, depending only on $D$ and $C$, such that if $[a,b]$ and $[c,d]$ are geodesic segments with $d(a,c), d(b,d) \leq C$ and $[a,b]$ is $D$-contracting, then $[c,d]$ is $D'$-contracting.
\end{lemma}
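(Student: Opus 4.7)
The plan is to deduce this stability statement from the equivalence of contracting and Morse geodesics with controlled constants (Theorem \ref{theorem:equivalence}) together with the Hausdorff stability of the Morse property (Lemma \ref{lem:constants}(1)), since all the needed constant-tracking is already in place.

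First, I would apply Theorem \ref{theorem:equivalence} to promote the $D$-contracting hypothesis on $[a,b]$ to the statement that $[a,b]$ is $M$-Morse, where $M$ depends only on $D$. Next, I would check that $[a,b]$ and $[c,d]$ lie within Hausdorff distance $C$ of each other: parameterizing both segments proportionally to arclength on $[0,1]$ and invoking property (C3) of Lemma \ref{lem:cat} gives
\[
d\bigl([a,b](t),[c,d](t)\bigr) \leq (1-t)\,d(a,c) + t\,d(b,d) \leq C
\]
for every $t \in [0,1]$, so each segment is contained in the $C$-neighborhood of the other. Viewing $[c,d]$ as a $(1,0)$-quasi-geodesic, Lemma \ref{lem:constants}(1) then produces a constant $M'$, depending only on $M$ and $C$, such that $[c,d]$ is $M'$-Morse. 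Finally, applying Theorem \ref{theorem:equivalence} in the reverse direction yields a contracting constant $D'$ for $[c,d]$ depending only on $M'$, and hence only on $D$ and $C$.

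The only real obstacle is a bookkeeping caveat: Theorem \ref{theorem:equivalence} and Lemma \ref{lem:constants}(1) are phrased for geodesic rays, lines, or infinite quasi-geodesics, whereas here we are working with finite geodesic segments. This is not a substantive issue, since the proofs of those results are local in nature — they depend only on pairs of points in $X$, their projections onto the reference geodesic, and CAT(0) comparison arguments — and go through verbatim for segments. A direct approach comparing $\pi_{[a,b]}$ and $\pi_{[c,d]}$ pointwise is also conceivable, but the precise relation between these two projections is delicate, so funnelling the argument through the Morse characterization is considerably cleaner and reuses machinery already established in this section.
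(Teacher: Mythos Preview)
Your argument is correct, and in fact the paper itself does not prove this lemma: it is quoted directly from \cite{bestvinafujiwara} without proof. Moreover, the paper explicitly remarks, in the sentence immediately preceding the lemma, that the contracting property passes between asymptotic geodesics as a consequence of Lemma~\ref{lem:constants} and Theorem~\ref{theorem:equivalence} --- precisely the route you take --- and then cites the Bestvina--Fujiwara lemma as ``a more elementary proof of this fact.'' So your approach is not only valid but is the one the paper endorses; the citation to \cite{bestvinafujiwara} is offered as a self-contained alternative that avoids the detour through the Morse characterization. Your bookkeeping remark about segments versus rays is well taken and is the only thing one must check; as you say, the proofs of Theorem~\ref{theorem:equivalence} and Lemma~\ref{lem:constants}(1) are pointwise and go through unchanged.
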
 

Hence, by Lemma \ref{stability}, we have a well defined notion of a point $\alpha(\infty)$ in the visual boundary being contracting or non-contracting. 
Define the \emph{contacting boundary} of a complete CAT(0) space $X$ to be the subset of the visual boundary 
consisting of 
$$ \partial_cX= \{\alpha(\infty) \in \partial X \mid \textrm{$\alpha$ is contracting} \}  $$
As before, we can fix a base point $x_0$ in $X$ and represent each point on $\partial_cX$ by a unique contracting ray based at $x_0$.  

\subsection{Topology on $\partial_cX$}
One possible topology on $\partial_cX$ is the subspace topology induced by the cone topology on $\partial X$.  For our purposes, however, a topology which takes account of the contracting constant is more natural, as well as more useful.
Fix a basepoint $x_0 \in X$.  For any natural number $n$, let
$\partial_c^nX_{x_{0}}$ denote the subspace of $\partial X$ consisting of points represented by some  $n$-contracting ray emanating from the fixed basepoint $x_{0}.$  That is,
 $$\partial_c^nX_{x_{0}}= \{[\gamma] \in \partial X | \gamma(0)=x_{0}, \gamma \mbox{ is an $n$--contracting ray } \}$$

Notice that there is an obvious inclusion map $i:\partial_{c}^{m}X_{x_{0}} \ra \partial_{c}^{n}X_{x_{0}}$ for all $m<n.$  Accordingly, we can consider the contracting boundary $\partial_{c}X$ as the direct limit, $\varinjlim_{n\in N} \partial_{c}^{n}X_{x_{0}}$, and equip the contracting boundary with the direct limit topology.   
 
It should be cautioned that the choice of basepoint $x_{0}$ effects  the contraction constant of our representative ray for a point in $\partial_{c} X$.  That is, for distinct base point $x_0, x_1 \in X$,  the subspaces $\partial_{c}^{n}X_{x_{0}}$ and $\partial_{c}^{n}X_{x_{1}}$ need not be the same.  Nonetheless, as we will see in Lemma \ref{lem:topology}  below, the direct limit topology on $\partial_cX$ is, in fact, independent of the choice of basepoint.  

 \begin{lemma} \label{lem:closed} For all $n,$ $\partial_c^nX_{x_0} \subset \partial X$ is closed with respect to the cone topology.
 \end{lemma}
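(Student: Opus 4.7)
Let $\bigl(\gamma_k(\infty)\bigr)_{k\in\mathbb{N}}\subset\partial_c^nX_{x_0}$ converge in the cone topology to some $\alpha(\infty)\in\partial X$, and let $\alpha$ be the unique geodesic ray from $x_0$ representing $\alpha(\infty)$ (which exists since $X$ is complete CAT(0)). Unpacking the definition of the cone topology, $\gamma_k\to\alpha$ uniformly on compact subintervals of $[0,\infty)$. To show $\alpha(\infty)\in\partial_c^nX_{x_0}$ I plan to pass to the limit in the defining $n$-contracting inequality applied to each $\gamma_k$; the crux is a continuity statement for nearest-point projection under uniform convergence of the target geodesic.

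The key step is: for every $z\in X$, $\pi_{\gamma_k}(z)\to\pi_\alpha(z)$ as $k\to\infty$. Writing $\pi_{\gamma_k}(z)=\gamma_k(s_k)$, I note that since $x_0=\gamma_k(0)\in\gamma_k$, we have $d(z,\gamma_k(s_k))\leq d(z,x_0)$, and hence $s_k=d(x_0,\gamma_k(s_k))\leq 2d(x_0,z)$. So $(s_k)$ is bounded, and it is enough to show that every convergent subsequence $s_{k_j}\to s_\infty$ satisfies $\alpha(s_\infty)=\pi_\alpha(z)$. Uniform convergence of $\gamma_{k_j}$ on $[0,2d(x_0,z)]$ gives $\gamma_{k_j}(s_{k_j})\to\alpha(s_\infty)$. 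A standard two-sided estimate (using $d(z,\gamma_{k_j})\leq d(z,\gamma_{k_j}(t_0))\to d(z,\alpha(t_0))=d(z,\alpha)$ with $t_0$ the projection parameter of $z$ on $\alpha$, and lower-bounding via the same subsequence argument) shows $\lim d(z,\gamma_{k_j})=d(z,\alpha)$. Thus $d(z,\alpha(s_\infty))=d(z,\alpha)$, and uniqueness of CAT(0) projection (Lemma \ref{lem:cat}, [C2]) forces $\alpha(s_\infty)=\pi_\alpha(z)$.

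With projection continuity in hand, suppose $x,y\in X$ satisfy $d(x,y)<d(x,\pi_\alpha(x))$. Set $p_k=\pi_{\gamma_k}(x)$, $q_k=\pi_{\gamma_k}(y)$, $p=\pi_\alpha(x)$, $q=\pi_\alpha(y)$. Then $p_k\to p$, $q_k\to q$, and $d(x,p_k)\to d(x,p)$; since the initial inequality is strict, $d(x,y)<d(x,p_k)$ for all sufficiently large $k$. The $n$-contracting property of $\gamma_k$ yields $d(p_k,q_k)<n$, and passage to the limit gives $d(p,q)\leq n$, which (together with the observation that the same argument applies with $n$ replaced by any value slightly larger, so the defining strict inequality is inherited on perturbing $x,y$) establishes that $\alpha$ is $n$-contracting and hence that $\alpha(\infty)\in\partial_c^nX_{x_0}$.

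The main obstacle I anticipate is the projection-continuity step. Both ingredients are essential: boundedness of the nearest-point parameter $s_k$, which relies crucially on all $\gamma_k$ sharing the basepoint $x_0$, and uniqueness of CAT(0) projections, which pins down the subsequential limit. Once continuity of projection is established, the rest of the proof is a direct passage to the limit in the defining contraction inequality.
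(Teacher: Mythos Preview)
Your approach is essentially the paper's: prove $\pi_{\gamma_k}(z)\to\pi_\alpha(z)$ for every $z\in X$ and then pass to the limit in the contraction inequality. Your parametrize-and-take-subsequences argument for projection continuity is a valid alternative to the paper's, which instead uses the CAT(0) Pythagorean inequality (the comparison angle at $\pi_\alpha(z)$ is at least $\pi/2$) to bound $d\bigl(\pi_\alpha(z),\pi_\alpha(\pi_{\gamma_k}(z))\bigr)$ directly.

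One caveat: passing to the limit in $d(p_k,q_k)<n$ yields only $d(p,q)\leq n$, and your ``perturb $x,y$'' remark does not actually recover the strict inequality in the definition. The paper glosses over this point as well; it is harmless for the direct-limit construction, since a ray satisfying the weak $\leq n$ inequality lies in $\partial_c^{n'}X_{x_0}$ for every $n'>n$, so the resulting topology on $\partial_cX$ is unaffected.
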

 
 \begin{proof}
Let $\{\alpha_{i}\}$ be any sequence of $n$-contracting rays based at $x_0$  which converge to a ray $\beta.$  We 
need to show that $\beta$ is $n$-contracting.   For this, it suffices to verify that for any point $y$ not on $\beta$,  the projections
$z=\pi_\beta(y)$ and $y_i=\pi_{\alpha_i}(y)$ satisfy $d(z,y_i) \to 0$ as $i \to \infty$.
 By definition of convergence, the distance from any finite segment of $\beta$  to $\alpha_i$ approaches zero as $i \to \infty$.
 Thus, replacing $y_i$ by its projection $z_i = \pi_\beta(y_i)$, it suffices to show that $d(z,z_i) \to 0$ as $i \to \infty$.
 
Given any $\epsilon >0$, choose $i$ such that $\beta$ and $\alpha_i$ are $\epsilon$-close on a segment which includes
$z,z_i, y_i$.  Then the distances from $y$ to $\beta$ and from $y$ to $\alpha_i$ differ by at most $\epsilon$,  and hence  
$|d(y,z) - d(y, z_i) | < 2\epsilon$.  Consider the triangle $\Delta(y,z,z_i)$.  The Euclidean comparison triangle has angle $\geq \frac{\pi}{2}$ at $z$ (since $z$ is the projection of $y$ on $\beta$) so the edge lengths must satisfy
  $$ d(y,z) ^2 + d(z,z_i)^2 \leq d(y, z_i)^2 \leq (d(y,z) + 2\epsilon)^2.$$
Letting $\epsilon \to 0$, it follows that $d(z,z_i) \to 0$ as $i \to \infty$
\end{proof}

\begin{lemma}\label{lem:topology}
The direct limit topology on $\varinjlim_{n\in N} \partial_{c}^{n}X_{x_{0}}$ is independent of the choice of basepoint $x_{0}.$
\end{lemma}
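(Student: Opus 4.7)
The plan is to show that the identity map $\iota : \varinjlim_n \partial_c^n X_{x_0} \to \varinjlim_n \partial_c^n X_{x_1}$ is a homeomorphism. By symmetry between the two basepoints, it suffices to prove $\iota$ is continuous. By the universal property of the direct limit topology on the domain, $\iota$ is continuous if and only if its restriction to each $\partial_c^n X_{x_0}$ is continuous into $\varinjlim_m \partial_c^m X_{x_1}$.

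The key technical step I need is the following containment claim: for every $n$, there exists $m = m(n, C)$, depending only on $n$ and $C := d(x_0,x_1)$, such that $\partial_c^n X_{x_0} \subseteq \partial_c^m X_{x_1}$ as subsets of $\partial X$. Granting this, the map $\partial_c^n X_{x_0} \hookrightarrow \partial_c^m X_{x_1}$ is just the inclusion of subsets of $\partial X$ and both carry the subspace topology from the (basepoint-independent) cone topology, so the inclusion is automatically continuous. Composing with the canonical continuous map $\partial_c^m X_{x_1} \to \varinjlim_m \partial_c^m X_{x_1}$ yields the required continuity of $\iota|_{\partial_c^n X_{x_0}}$.

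To establish the containment claim, let $\gamma$ be an $n$-contracting ray based at $x_0$ and let $\gamma'$ be the unique ray from $x_1$ asymptotic to $\gamma$. Convexity of the CAT(0) distance function (property C3 of Lemma \ref{lem:cat}) makes $t \mapsto d(\gamma(t),\gamma'(t))$ a bounded convex function, hence non-increasing, so $\gamma$ and $\gamma'$ have Hausdorff distance at most $C$. Now I would apply Theorem \ref{theorem:equivalence} to conclude that $\gamma$ is $M$-Morse for some $M$ depending only on $n$; apply Lemma \ref{lem:constants}(1) to propagate this through Hausdorff distance $\leq C$ and obtain that $\gamma'$ is $M'$-Morse with $M'$ depending only on $M$ and $C$; and finally apply Theorem \ref{theorem:equivalence} in the reverse direction to produce a contracting constant $m$ for $\gamma'$ depending only on $M'$, hence only on $n$ and $C$.

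The whole argument rests on the quantitative ``moreover'' clause of Theorem \ref{theorem:equivalence} (the constants determine each other), which is exactly what allows the chain contracting $\to$ Morse $\to$ Morse (after bounded perturbation) $\to$ contracting to preserve dependence on $n$ and $C$ only. That quantitative control is the one genuine ingredient; everything else is a direct limit formality together with the basepoint-independence of the ambient cone topology on $\partial X$. No serious obstacle is anticipated beyond being careful with this bookkeeping.
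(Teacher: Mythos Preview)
Your argument is correct and reaches the same conclusion as the paper, but the two proofs differ in both of the main steps. For the containment $\partial_c^n X_{x_0}\subseteq \partial_c^{m}X_{x_1}$, the paper invokes the Bestvina--Fujiwara stability lemma (Lemma~\ref{stability}) directly, which gives a contracting constant for a segment lying within bounded Hausdorff distance of a $D$-contracting one without passing through the Morse property; you instead route through Theorem~\ref{theorem:equivalence} and Lemma~\ref{lem:constants}(1). Your route is a heavier hammer but perfectly valid, and it illustrates why the quantitative ``moreover'' clause matters. For the topological step, the paper works with closed sets and uses Lemma~\ref{lem:closed} to pass between closedness in $\partial_c^n X$ and closedness in $\partial X$; you instead use the universal property of the direct limit together with the observation that an inclusion of subspaces of $\partial X$ (each with the subspace topology) is automatically continuous. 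Your approach here is arguably cleaner, since it does not require knowing that $\partial_c^n X$ is closed in $\partial X$.
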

\begin{proof}
Given two asymptotic rays $\gamma$ and $\gamma'$ emanating from $x_{0}$ and $x_{1},$ respectively, using CAT(0) convexity (property (C3) of \ref{lem:cat}) in conjunction with the fact that a bounded convex function is constant, it follows that $\gamma,\gamma'$ have Hausdorff distance at most $d(x_0,x_1)$.   In particular, by Lemma \ref{stability}, it follows that if $\gamma'$ is $c'$--contracting, then $\gamma$ is $c$--contracting with $c$ depending only on $c'$ and $d(x_{0},x_{1})$.
 In other words, the identity map gives an inclusion 
$$i: \partial_c^nX_{x_{1}} \ra\partial_c^{f(n)}X_{x_{0}}$$
 where $f: \N \ra \N$ is a non-decreasing function.  

To see that the direct limit topology on $\partial_{c} X$ is independent of the choice of base point,  first note that by Lemma \ref{lem:closed}, a subset of $\partial_c^nX_{x_{0}}$ is closed in $\partial_c^nX_{x_{0}}$ if and only if it is closed in $\partial X$, and likewise for $\partial_c^nX_{x_{1}}$.   Let $V$ be closed in $\varinjlim_{n\in N} \partial_{c}^{n}X_{x_{0}}.$  That is, $\forall n \; V^{n}_{x_{0}} := V \cap \partial_{c}^{n}X_{x_{0}}$ is closed in $\partial X.$   Applying Lemma \ref{lem:closed} again, we see  that
 $$V^{n}_{x_{1}} := V \cap \partial_{c}^{n}X_{x_{1}} = V^{f(n)}_{x_0} \cap \partial_{c}^{n}X_{x_1}$$ 
 is also closed in $\partial X$, so $V$ is closed in $\varinjlim_{n\in N} \partial_{c}^{n}X_{x_{1}}.$  
 
 By symmetry, the converse is also true.  That is, a set is closed in $\varinjlim_{n\in N} \partial_{c}^{n}X_{x_{0}}$ if and only if it is closed in $\varinjlim_{n\in N} \partial_{c}^{n}X_{x_{1}}$.
 \end{proof}

Hereafter, we will assume that the topology on $\partial_{c} X$ is the direct limit topology.  When convenient, we will also assume that the basepoint is fixed, omit it from the notation and write $\partial_{c} X = \varinjlim_{n\in N} \partial_{c}^{n}X.$

It is immediate that any set which is open (equivalently closed) in the subspace topology on $\partial_{c}X$ is also open (equivalently closed) in the direct limit topology $\partial_{c}X.$  On the other hand, as we will see in Example \ref{ex:strips} below,  the direct limit topology can be strictly finer than the subspace topology.

\subsection{Some examples}\label{sec:examples}

Before studying properties of contracting boundaries in general,  we consider some illuminating examples.  

First consider the case where
 $X=\R^{n}$ for $n\geq 2,$ or more generally, $X$ is the product of two unbounded CAT(0) spaces. 
 In this case, every geodesic is contained in a flat, hence $\partial_{c}X = \emptyset.$
At the other extreme is the case where
 $X$ is a CAT(0), $\delta$-hyperbolic space, so every geodesic ray is $D$-contracting where $D$ depends only on $\delta$.  In this case, $\partial_{c} X=\partial X$ and the direct limit topology is the same as the cone topology.   For example, $\partial_{c}\mathbb{H}^{2}=\mathbb S^{1}.$

Now let us combine these two extremes.  Let $X$ be the space obtained by gluing a half plane to a geodesic line $\beta$ in $\mathbb H^2$ and take the basepoint to be $\beta(0)$.  No ray lying in the half plane (including $\beta$ )is contracting, but all other rays lying in $\mathbb H^2$ are still contracting (though their contracting constants approach infinity as the rays approach $\beta$).  Thus, the contracting boundary $\partial_{c}X$ is an open interval.  This example illustrates the point    that unlike the visual boundary,  $\partial _{c}X$ need not be compact, even when $X$ is a proper space.


\begin{example}[Croke-Kleiner space] \label{ex:CK}  In \cite{crokekleiner}, Croke and Kleiner showed that boundaries of CAT(0) spaces are not invariant under quasi-isometry.  Their example involved the Salvetti complex $S_\G$ of a certain right-angled Artin group (RAAG).  We briefly recall their construction.  Let $\AG$ denote the RAAG associated to the graph $\G$ in Figure \ref{fig:CK}.  That is, 
$$\AG =\left < a,b,c,d \mid [a,b]=[b,c]=[c,d]=1 \right >.$$
The Salvetti complex $S_\G$ is the standard $K(\pi,1)$-space for this group, consisting of three tori with the middle torus glued to the other two tori along orthogonal curves corresponding to the generators $b$ and $c$.  The universal cover of this space, which we will denote by $X_\G$, is a CAT(0) cube complex. 

\begin{figure}[htpb] 
\centering
\includegraphics[height=4 cm]{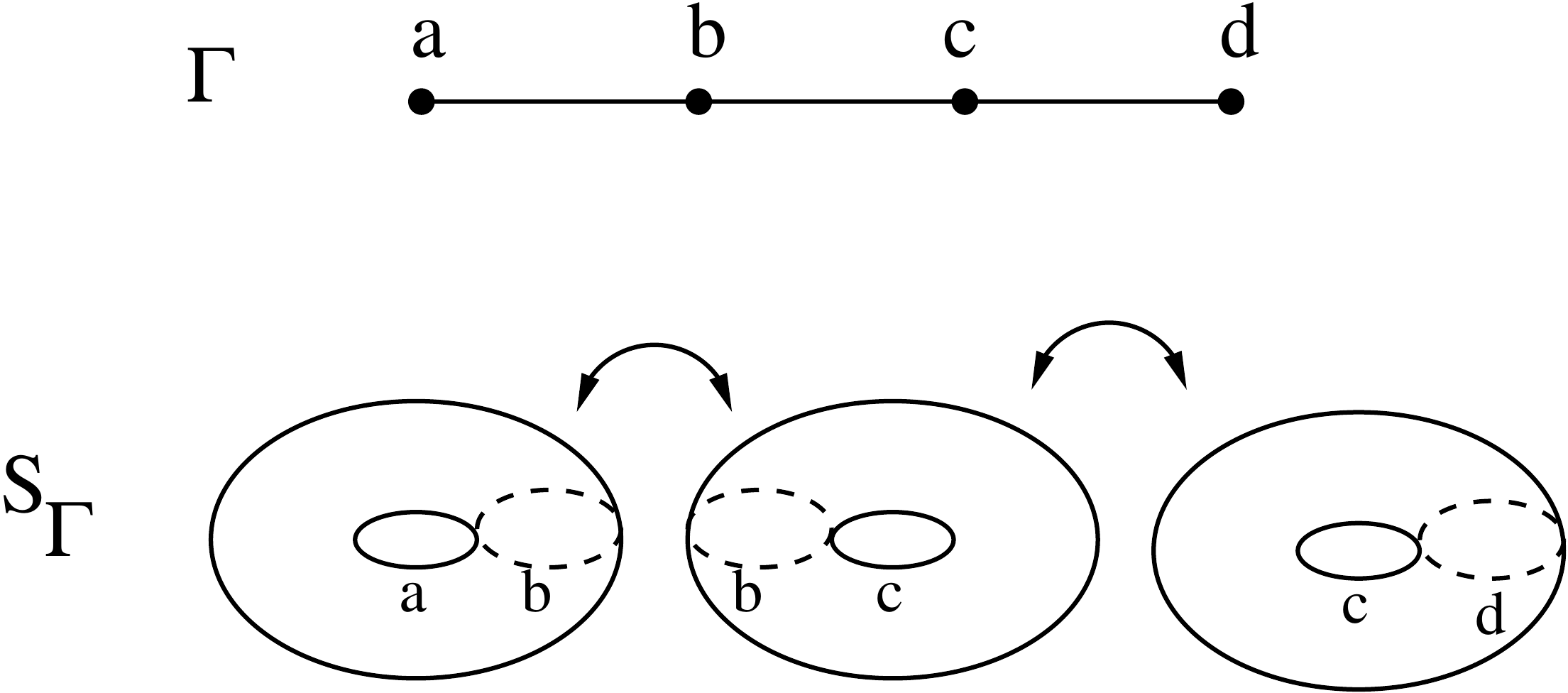}
\caption{Croke-Kleiner space} \label{fig:CK}
\end{figure}

Let $B_1$ denote the subspaces of $S_\G$ consisting if the union of the $(a,b)$-torus and the $(b,c)$-torus, and $\tilde B_1$ its inverse image in $X_\G$.  Then $B_1$ decomposes as a product hence each component of  $\tilde B_1$ has empty contracting boundary.  The same holds for $B_2$, the subspace of $S_\G$ consisting if the $(b,c)$-torus and the $(c,d)$-torus. Croke and Kleiner refer to these components as ``blocks".  Now consider a geodesic ray $\alpha$ in  $X_\G$.  Any segment of $\alpha$ contained in a block lies in a flat, hence if $\alpha$ is contracting, there must be a uniform bound on the length of such segments.  It will follow from the discussion of cube complexes in Section \ref{sec:cubecomplexes} below, that the converse is also true.  That is, $\alpha$ is contracting if and only if the length of segments of $\alpha$ lying in a single block is uniformly bounded.  
\end{example}

 \subsection{Basic properties of $\partial_cX$}

Now recall that if $X$ is a proper CAT(0) space, then the cone topology on the visual boundary $\partial X$, is compact.  The contracting boundary on the other hand, is not, in  general, compact. 
A space is said to be \emph{$\sigma$-compact} if it is a countable union of compact subspaces.  As an immediate consequence of Lemma \ref{lem:closed} we have the following:

 \begin{proposition} \label{prop:compact} If $X$ is proper, then  $\partial_cX$ is $\sigma$-compact.  
\end{proposition}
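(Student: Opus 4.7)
The plan is to unwind the direct limit definition and reduce to the fact that each stratum $\partial_c^n X_{x_0}$ is already compact in the ambient visual boundary.

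First I would fix a basepoint $x_0 \in X$ and write $\partial_c X = \varinjlim_{n \in \N} \partial_c^n X_{x_0}$ as in the body of the paper. Since $X$ is proper, a standard fact about visual boundaries (see \cite[Section II.8]{bridsonhaefliger}) gives that $\partial X$ is compact under the cone topology. By Lemma \ref{lem:closed}, each $\partial_c^n X_{x_0}$ is a closed subset of $\partial X$, so each $\partial_c^n X_{x_0}$ is compact in the cone topology.

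Next I would observe that the canonical inclusion $\iota_n \co \partial_c^n X_{x_0} \hookrightarrow \partial_c X$ is continuous by the definition of the direct limit topology (this topology is, by construction, the finest one making all the $\iota_n$ continuous from the cone topology on $\partial_c^n X_{x_0}$). Hence $\iota_n(\partial_c^n X_{x_0})$ is the continuous image of a compact space, and therefore compact as a subspace of $\partial_c X$. Since $\partial_c X = \bigcup_{n \in \N} \iota_n(\partial_c^n X_{x_0})$ is a countable union of such compact subspaces, it is $\sigma$-compact.

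There is essentially no obstacle here; the only subtlety worth flagging is that we do not need to know that the subspace topology on $\partial_c^n X_{x_0}$ inside $\partial_c X$ coincides with the cone topology — only that the continuous image of a compact set is compact, which is immediate. Everything else is bookkeeping on the direct limit definition, together with the (already established) closedness result of Lemma \ref{lem:closed} and the standard compactness of $\partial X$ for proper CAT(0) spaces.
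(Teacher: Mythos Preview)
Your proposal is correct and follows essentially the same approach as the paper: use Lemma \ref{lem:closed} to see each $\partial_c^n X_{x_0}$ is closed in the compact space $\partial X$, hence compact, and then write $\partial_c X$ as the countable union of these. Your extra care in passing compactness through the continuous inclusion $\iota_n$ into the direct limit topology is a nice touch the paper leaves implicit, but it does not change the argument.
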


\begin{proof}
If $X$ is proper, then  $\partial X$ is compact, hence by Lemma \ref{lem:closed} so is $ \partial_{c}^{n}X$.  Since $\partial_{c}X=\bigcup_{n\in \N} \partial_{c}^{n}X,$ it is $\sigma$-compact.
\end{proof}

 Another useful property of the boundary of a hyperbolic space $X$ is the visibility property:  given any two distant points $\alpha(\infty), \beta(\infty)$ in $\partial X$, there is a geodesic line $\gamma$ in $X$ which is asymptotic to $\alpha$ at one end and asymptotic to $\beta$ at the other.  This is not the case for a CAT(0) space.  For example, if $X$ is the Euclidean plane, then the only time $\alpha(\infty), \beta(\infty)$ are visible from each other in this sense, is if they are antipodal on the boundary circle.  However, as we will see in the next proposition, points on the contracting boundary satisfy a strong visibility property.

\begin{proposition}\label{visibility}  Let $\alpha$ and $\beta$ be distinct rays based at $x_0$ and assume $\alpha$ is contracting. Then
\begin{enumerate}
\item the projection of $\beta$ on $\alpha$ is bounded.  
\item $\exists$ a bi-infinite geodesic $\gamma$ such that $\gamma (\infty)=\alpha (\infty)$ and $\gamma (-\infty)=\beta (\infty)$, 
\end{enumerate}
\end{proposition}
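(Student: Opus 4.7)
For part (1), my plan is to argue by contradiction using the equivalence of contracting with slim from Theorem \ref{theorem:equivalenceexpanded}. Let $\delta$ denote the slimness constant of $\alpha$ and suppose $\pi_\alpha(\beta)$ is unbounded. Choose $t_n$ with $R_n := d(x_0, \pi_\alpha(\beta(t_n))) \to \infty$ and apply slim condition (i) with $x = \beta(t_n)$, $y = x_0 \in \alpha$. This produces a point $\beta(s_n) \in [\beta(t_n), x_0] = \beta|_{[0, t_n]}$ with $d(\beta(s_n), \alpha(R_n)) < \delta$. The triangle inequality then forces $|s_n - R_n| < \delta$, so $s_n \to \infty$ and $d(\alpha(s_n), \beta(s_n)) < 2\delta$. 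By property C3 of Lemma \ref{lem:cat}, the function $f(s) := d(\alpha(s), \beta(s))$ is convex with $f(0) = 0$; combined with the bound $f(s_n) < 2\delta$, convexity forces $f$ to be bounded on all of $[0, \infty)$, hence non-increasing. Since $f \geq 0$, we conclude $f \equiv 0$, so $\alpha = \beta$, contradicting distinctness.

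For part (2), the plan is to construct $\gamma$ as a subsequential Arzela--Ascoli limit of the geodesic segments $\gamma_n := [\beta(n), \alpha(n)]$, using properness of $X$ as in the Main Theorem. By (1), $\pi_\alpha(\beta) \subseteq \alpha([0, B])$ for some $B$. Applying slim (i) with $x = \beta(n), y = \alpha(n)$ locates a point $p_n \in \gamma_n$ with $d(p_n, \pi_\alpha(\beta(n))) < \delta$, so $\{p_n\}$ lies in the compact set $N_\delta(\alpha([0, B]))$. Parametrize $\gamma_n : [-s_n, t_n] \to X$ by arclength with $\gamma_n(0) = p_n$, $\gamma_n(-s_n) = \beta(n)$, $\gamma_n(t_n) = \alpha(n)$. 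Since $|s_n - n|, |t_n - n| \leq B + \delta$, we have $s_n, t_n \to \infty$, and Arzela--Ascoli delivers a subsequential limit $\gamma : \mathbb{R} \to X$, a bi-infinite geodesic.

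To identify the endpoints at infinity, I would invoke the CAT(0) fellow-traveling estimate (property C3). On the $\alpha$-side, compare $[p_n, \alpha(n)]$ with $\alpha|_{[0, n]}$: both terminate at $\alpha(n)$ and their initial points are within $B + \delta$ of each other. Parametrizing both on $[0, 1]$ and applying C3 yields $d(\gamma_n(u), \alpha(u n/t_n)) \leq (1 - u/t_n)(B + \delta)$. Fixing $u$ and letting $n \to \infty$ (using $n/t_n \to 1$ and continuity of $\alpha$) gives $d(\gamma(u), \alpha(u)) \leq B + \delta$, so $\gamma|_{[0, \infty)}$ is asymptotic to $\alpha$, i.e.\ $\gamma(\infty) = \alpha(\infty)$. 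A symmetric comparison of $[p_n, \beta(n)]$ with $\beta|_{[0, n]}$ gives $\gamma(-\infty) = \beta(\infty)$. The main obstacle is part (1) --- correctly combining the slim condition with CAT(0) convexity to pin down the projections as bounded; once (1) is in hand, (2) is a standard Arzela--Ascoli plus fellow-traveling construction.
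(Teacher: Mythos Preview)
Your proof is correct and follows essentially the same route as the paper's. For (1), both you and the paper apply the slim condition with $x=\beta(t_n)$, $y=x_0$ to find points of $\beta$ within $\delta$ of $\pi_\alpha(\beta(t_n))$; the paper then simply observes this forces $\alpha$ to stay within $\delta$ of $\beta$ and hence $\alpha=\beta$, while you spell out the convexity argument for $f(s)=d(\alpha(s),\beta(s))$ more carefully. For (2), the paper also takes segments $\gamma_i=[\beta(t_i),\alpha(t_i)]$, notes (via slim) that each passes within $B+\delta$ of $x_0$, observes that $\gamma_i$ therefore stays in $N_{B+\delta}(\alpha\cup\beta)$, and concludes convergence; your Arzel\`a--Ascoli argument and C3-based endpoint identification make explicit exactly what the paper leaves to the reader. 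One small remark: you invoke properness for the Arzel\`a--Ascoli step, which the proposition as stated does not assume --- the paper's proof is silent on this point too, so you are not losing anything relative to the paper, but you might note that this is where the hypothesis is used.
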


\begin{proof}  (1):  Suppose the projection of $\beta$ on $\alpha$ is unbounded.  Then there exists a sequence of real numbers $t_i \to \infty$ such that the projections $x_i$ of $\beta(t_i)$  leave every ball about $x_0$.  Applying the thin triangle condition (i) with $y=x_0$, $x=\beta(t_i)$, we see that there exists $\delta$ such that $[x_0,x_i]$ lies within $\delta$ of $\beta$ for all $i$.  But this contradicts our assumption that  $\alpha \neq \beta$.

(2): By part (1), the projection of $\beta$ on $\alpha$ is bounded hence lies within $B$ of $x_0$ for some constant $B$.   Take a sequence $t_i \to \infty$ and consider the geodesic segments $\gamma_i$ from $\beta(t_i)$ to $\alpha(t_i)$.  By the thin triangle condition (i), each of these segments  passes through the ball of radius $B+\delta$ about $x_0$.  It follows that the segments $\gamma_i$ stay uniformly bounded distance from $\alpha \cup \beta$, hence they converge to a bi-infinite geodesic as desired.
\end{proof}

\begin{corollary} $\partial_cX$ is a visibility space.  That is, any two points in $\partial_cX$ are connected by a (contracting) bi-infinite geodesic. 
\end{corollary}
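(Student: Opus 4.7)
The plan is to reduce the statement to Proposition~\ref{visibility}(2) and then upgrade the bi-infinite geodesic produced there to a contracting one. Given distinct points $\alpha(\infty),\beta(\infty)\in\partial_cX$, I first represent them by contracting rays $\alpha,\beta$ at a common basepoint $x_{0}$. Since $\alpha$ is contracting, Proposition~\ref{visibility}(2) applies and produces a bi-infinite geodesic $\gamma$ with $\gamma(\infty)=\alpha(\infty)$ and $\gamma(-\infty)=\beta(\infty)$; this already establishes visibility.

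It remains to verify that $\gamma$ itself may be taken to be contracting. Pick any point $p_{0}\in\gamma$ and decompose $\gamma=\gamma^{-}\cup\gamma^{+}$ into the two subrays emanating from $p_{0}$. Each of $\gamma^{\pm}$ is asymptotic to the corresponding contracting ray $\beta$ or $\alpha$, and by the remark in the text (a consequence of Lemma~\ref{lem:constants} and Theorem~\ref{theorem:equivalence}, or alternatively of Lemma~\ref{stability}), asymptotic rays are simultaneously contracting; thus $\gamma^{+}$ is $D^{+}$-contracting and $\gamma^{-}$ is $D^{-}$-contracting for some constants.

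The core of the argument is then the following general claim: if a bi-infinite CAT(0) geodesic $\gamma$ splits at a point into two rays with contracting constants $D^{+}$ and $D^{-}$, then $\gamma$ itself is $(D^{+}+D^{-})$-contracting. Given a metric ball $B$ disjoint from $\gamma$, I will observe that $\pi_{\gamma}(B)$ is connected, being the continuous image (property [C2] of Lemma~\ref{lem:cat}) of the connected set $B$, and hence an interval in $\gamma\cong\R$. Whenever $y\in B$ satisfies $\pi_{\gamma}(y)\in\gamma^{+}$, the containment $\gamma^{+}\subset\gamma$ forces $\pi_{\gamma}(y)=\pi_{\gamma^{+}}(y)$, since the closest point on the larger set lies in the smaller one and hence must also be the closest point on the smaller one. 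Consequently the part of the interval $\pi_{\gamma}(B)$ lying in $\gamma^{+}$ embeds into $\pi_{\gamma^{+}}(B)$, which has diameter less than $2D^{+}$ because $B\cap\gamma^{+}\subset B\cap\gamma=\emptyset$. The symmetric argument gives a bound of $2D^{-}$ on the length of the portion in $\gamma^{-}$, and summing shows that $\pi_{\gamma}(B)$ has diameter at most $2(D^{+}+D^{-})$.

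I do not anticipate a serious obstacle. The one observation requiring care is the connectedness of $\pi_{\gamma}(B)$, without which an interval structure on $\pi_{\gamma}(B)$ would fail and the two halves could a priori be separated by an arbitrary amount on $\gamma$; connectedness forces their union to be a single interval containing $p_{0}$, so the length bounds from the two contracting hypotheses add directly.
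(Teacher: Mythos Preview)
Your proposal is correct and follows the same route as the paper, which states the corollary without proof as an immediate consequence of Proposition~\ref{visibility}(2). The paper does not spell out why the resulting bi-infinite geodesic $\gamma$ is itself contracting; your ball-projection argument---using continuity of $\pi_\gamma$ to see that $\pi_\gamma(B)$ is an interval, then bounding the two halves via $\pi_\gamma|_{\gamma^{\pm}}=\pi_{\gamma^{\pm}}$ on the relevant preimages---fills this in cleanly and is a worthwhile addition.
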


The second statement of Proposition \ref{visibility} says that if $\alpha$ is contracting, then every point on the visual boundary of $X$ is ``visible" from $\alpha(\infty)$.  It is reasonable to ask whether this property characterizes contracting rays.  The answer is no, as the following example illustrates.

\begin{figure}[htpb] 
\centering
\includegraphics[height=4 cm]{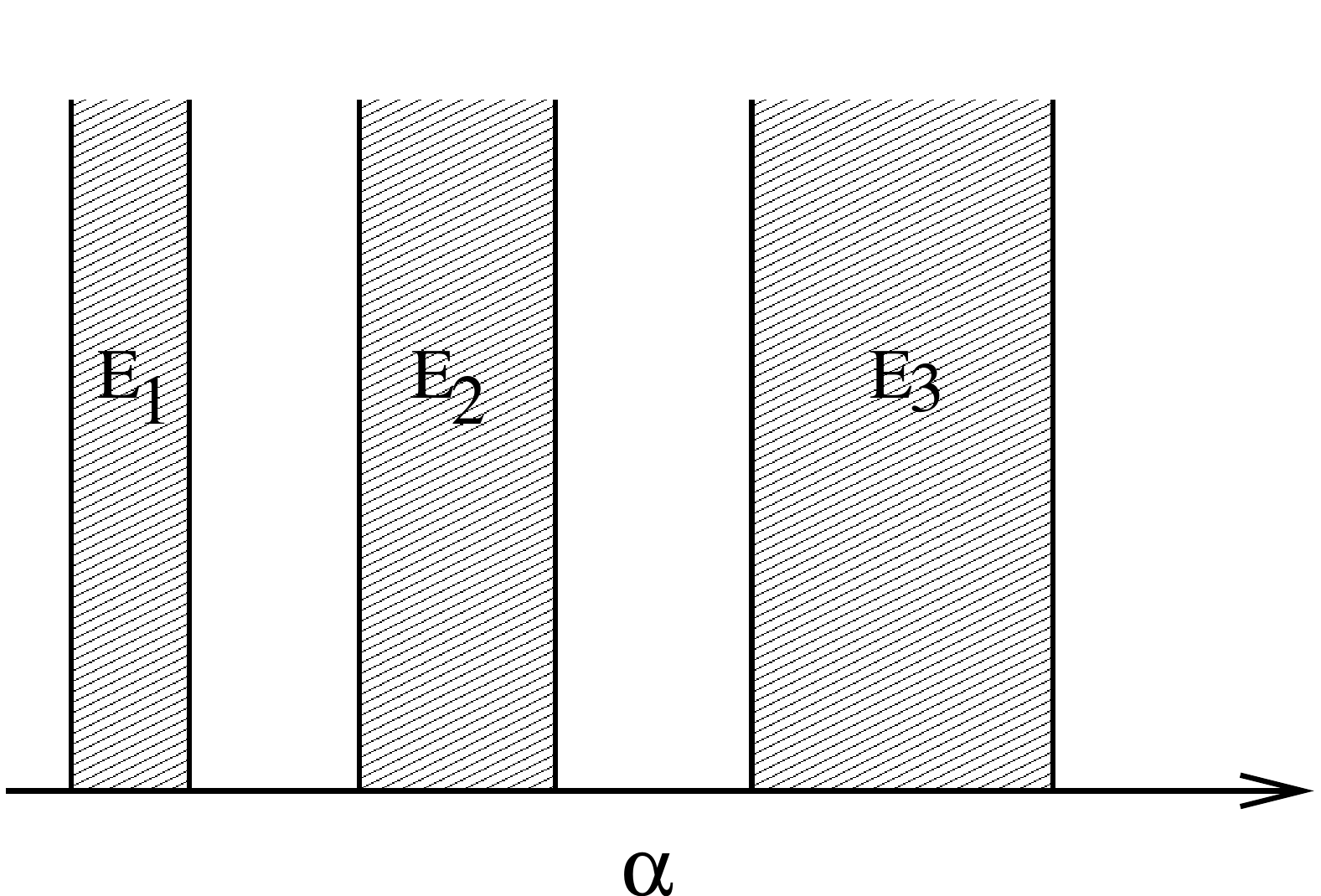}
\caption{} \label{fig:strips}
\end{figure}    

\begin{example}  
Let $X$ be constructed by starting with a ray $\alpha$, and attaching wider and wider Euclidean strips $E_n$ along $\alpha$, as in Figure \ref{fig:strips}.  Then $\partial X$ consists of the point $\alpha(\infty)$ together with one point $e_n(\infty)$ for each strip $E_n$.  It is easy to see that every point $e_n(\infty)$  is visible from $\alpha(\infty)$, but $\alpha$ is not contracting.
\end{example}

\subsection{Quasi-isometry invariance}\label{sec:qiinvariance}

In this section we prove our main theorem:  that quasi-isometries of CAT(0) spaces induce homeomorphisms of their contracting boundaries.  Let $f:X \rightarrow Y$ be a (K,L)--quasi-isometry.  Fix base points $x_0 \in X$ and $f(x_0) \in Y$. By Corollary \ref{cor:DtoD'}, $f$ induces a map 
$$\partial_{c}f: \partial_{c}X \rightarrow \partial_{c}Y$$  
which maps $\partial_{c}^{i}X$ into $\partial_{c}^{g(i)}(Y)$ for some non-decreasing function $g:\mathbb{N} \rightarrow \mathbb{N}$.

To prove that $\partial_cf$ is continuous, we will need the following a technical lemma regarding continuous maps in the direct limit topology.


\begin{lemma} \label{lem:technical} Let $X = \varinjlim X_{i}$ and $Y = \varinjlim Y_{i}$  with the direct limit topology.   Suppose $f:X \to Y$ is a function such that  $f(X_i) \subset Y_{g(i)},$ where $g:\mathbb{N} \rightarrow \mathbb{N}$ is a nondecreasing function. If $ f_{i} := f |_{X_{i}}$ is continuous for all $i$, then $f$ is continuous. 
\end{lemma}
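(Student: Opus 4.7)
The plan is to verify continuity of $f$ by the universal property of the direct limit topology: a subset $V \subseteq Y$ is closed iff $V \cap Y_j$ is closed in $Y_j$ for every $j$, and similarly for $X$. So to show $f$ is continuous, it suffices to show that for every closed $V \subseteq Y$, the preimage $f^{-1}(V)$ has closed intersection with each $X_i$.

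First I would fix a closed set $V \subseteq Y$ and an index $i$, and compute
\[
f^{-1}(V) \cap X_i \;=\; f_i^{-1}(V) \;=\; f_i^{-1}\bigl(V \cap f(X_i)\bigr) \;=\; f_i^{-1}\bigl(V \cap Y_{g(i)}\bigr),
\]
where the last equality uses the hypothesis $f(X_i) \subseteq Y_{g(i)}$. Since $V$ is closed in $Y$ (with the direct limit topology), $V \cap Y_{g(i)}$ is closed in $Y_{g(i)}$ by the definition of that topology. Because $f_i : X_i \to Y_{g(i)}$ is continuous (viewing $f_i$ as taking values in the subspace $Y_{g(i)}$, which is the natural codomain here), its preimage $f_i^{-1}(V \cap Y_{g(i)})$ is closed in $X_i$.

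Thus $f^{-1}(V) \cap X_i$ is closed in $X_i$ for every $i$, so by the definition of the direct limit topology on $X$, the set $f^{-1}(V)$ is closed in $X$. Therefore $f$ is continuous. The only mild subtlety is the step where I treat $f_i$ as a map into $Y_{g(i)}$ rather than into $Y$: this is legitimate because $f_i$ is assumed continuous as a map $X_i \to Y$, and $f_i$ factors through the inclusion $Y_{g(i)} \hookrightarrow Y$, which is continuous, so continuity into $Y_{g(i)}$ follows provided $Y_{g(i)}$ carries (at least) the subspace topology from $Y$ — which it does, since in the direct limit construction every $Y_j$ maps continuously into $Y$, and the characterization of closed sets shows that closed sets of $Y_j$ pull back from closed sets of $Y$. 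This is the only place one must be careful, but it is routine and not really an obstacle.
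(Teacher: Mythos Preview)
Your argument is correct and is essentially the same as the paper's proof, which works with open sets instead of closed sets but otherwise follows exactly the same computation $f^{-1}(U)\cap X_i = f_i^{-1}(U\cap Y_{g(i)})$. Your final paragraph about whether $Y_{g(i)}$ carries the subspace topology is unnecessary: since $f(X_i)\subseteq Y_{g(i)}$, one has $f_i^{-1}(V\cap Y_{g(i)}) = f_i^{-1}(V)$, so continuity of $f_i$ as a map into $Y$ already gives the conclusion directly, and the interpretation of the codomain never actually matters.
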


\begin{proof}
Let $U\subset Y$ be open.  Then  $U_{g(i)} = U \cap Y_{g(i)}$ is open in $Y_{g(i)}$ for all  $i$.  Since $f_{i}$ is continuous, it follows that $f^{-1}(U) \cap X_{i} = f^{-1}(U_{g(i)}) \cap X_{i}=f_{i}^{-1}(U_{g(i)}) $  is open in $X_{i}.$ By definition of the direct limit topology,   $f^{-1}(U)$ is open in $X$.  
\end{proof}

\begin{theorem}\label{qi-invariance}  Let $f : X \to Y$ be a quasi-isometry of complete CAT(0) spaces.  Then 
$\partial_{c}f: \partial_{c}X \rightarrow \partial_{c}Y$ is a homeomorphism. 
\end{theorem}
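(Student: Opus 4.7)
The plan is to apply Corollary \ref{cor:DtoD'} to both $f$ and a quasi-inverse $g : Y \to X$, use Lemma \ref{lem:technical} to upgrade level-wise continuity to continuity in the direct limit topology, and finally check that $\partial_c f$ and $\partial_c g$ are mutually inverse using the quasi-inverse estimate $d(x,g(f(x))) \leq C$.

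First, I would pin down the data. Fix basepoints $x_0\in X$ and $y_0 = f(x_0)\in Y$; by Lemma \ref{lem:topology} this choice is harmless. For each $n$, Corollary \ref{cor:DtoD'} guarantees a non-decreasing $g:\N \to \N$ with $\partial_c f(\partial_c^n X_{x_0}) \subset \partial_c^{g(n)} Y_{y_0}$. So the level-wise maps $\partial_c f|_{\partial_c^n X}$ are well-defined and, by Lemma \ref{lem:closed}, both source and target are closed subspaces of the respective visual boundaries with the cone topology.

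Next I would check level-wise continuity: if $\alpha_i \to \alpha$ in the cone topology with all $\alpha_i$ being $n$-contracting rays from $x_0$, I claim the corresponding rays $\beta_i = \partial_c f(\alpha_i)$ from $y_0$ converge to $\beta = \partial_c f(\alpha)$. By Corollary \ref{cor:DtoD'} there is a constant $C$ (depending only on $n,\lambda,\epsilon$) such that each $\beta_i$ and $\beta$ lie within Hausdorff distance $C$ of $f\circ\alpha_i$ and $f\circ\alpha$ respectively. Given any $T>0$ and $\varepsilon>0$, pick $R \gg T$. Cone-convergence of $\alpha_i\to\alpha$ yields, for all large $i$, that $\alpha_i$ and $\alpha$ agree within $\varepsilon'$ on $[0,R]$, so $f\circ\alpha_i$ and $f\circ\alpha$ are within $\lambda\varepsilon'+\epsilon$ on the corresponding interval. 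Parameterizing $\beta_i$ and $\beta$ proportionally and using that both start at $y_0$, CAT(0) convexity (property C3 of Lemma \ref{lem:cat}) forces
\[
d(\beta_i(t),\beta(t)) \le \frac{t}{R}\bigl(2C + \lambda\varepsilon'+\epsilon\bigr) \qquad (0\le t\le R).
\]
Taking $R$ sufficiently large makes this bound smaller than $\varepsilon$ on $[0,T]$, giving cone-topology convergence $\beta_i\to\beta$ in $\partial_c^{g(n)} Y_{y_0}$. Lemma \ref{lem:technical} then upgrades this to continuity of $\partial_c f$ in the direct limit topology.

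Finally I would produce the inverse and argue symmetrically. Let $g:Y\to X$ be a quasi-inverse of $f$, so $d_X(x,gf(x))\le C$ and $d_Y(y,fg(y))\le C$ uniformly. The same construction yields a continuous map $\partial_c g : \partial_c Y \to \partial_c X$. To identify $\partial_c g \circ \partial_c f$ with the identity, start with $\alpha$ a contracting ray from $x_0$; by construction $\beta=\partial_c f(\alpha)$ stays Hausdorff-bounded from $f\circ\alpha$, and then $\partial_c g(\beta)$ stays Hausdorff-bounded from $g\circ\beta$, which is Hausdorff-bounded from $g\circ f\circ\alpha$ (as $g$ is a quasi-isometry), which is bounded from $\alpha$ by the quasi-inverse estimate. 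Hence $\partial_c g \circ \partial_c f(\alpha)$ and $\alpha$ are asymptotic, so equal in $\partial_c X$. The symmetric argument handles the other composition, and we conclude $\partial_c f$ is a homeomorphism.

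The main obstacle I anticipate is the continuity step: what makes it non-trivial is that cone-topology convergence of quasi-geodesics is not automatic from Hausdorff closeness, and one must convert closeness of $f\circ\alpha_i$ to $f\circ\alpha$ on long initial segments into cone-closeness of the associated genuine geodesic rays $\beta_i,\beta$ — the CAT(0) convexity argument above is the essential mechanism, and it is exactly here that having a common basepoint $y_0=f(x_0)$ and the uniform constant $C$ from Corollary \ref{cor:DtoD'} (which in turn relies on the quantitative control of Theorem \ref{theorem:equivalence}) is indispensable.
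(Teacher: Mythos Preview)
Your proof is correct and follows essentially the same strategy as the paper's: reduce to level-wise continuity via Lemma \ref{lem:technical}, use the uniform Hausdorff/Morse bound from Corollary \ref{cor:DtoD'}, and apply CAT(0) convexity to propagate closeness at a far-out point back to the whole initial segment. The only cosmetic differences are that you argue with sequences rather than basic open sets (fine, since the cone topology is first-countable) and that the parameter $R$ in your displayed inequality should really be a $Y$-side parameter $s \approx \lambda^{-1}R$; once you track the corresponding points on $\beta_i$ and $\beta$ carefully (their arclength parameters differ by at most $2C+\lambda\varepsilon'+\epsilon$) the estimate goes through exactly as you intend.
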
 

\begin{proof}  Let $g$ be a quasi-inverse of $f$ and let $\gamma \in \partial_cX$.  By definition, $\partial_cf(\gamma)$ lies bounded distance from the quasi-geodesic $f \circ \gamma$, hence  $(\partial_{c}f \circ \partial_{c} g)(\gamma)$ lies bounded distance from $(g \circ f) (\gamma)$.  Since $\gamma$ also lies bounded distance from $(g \circ f )(\gamma)$, we conclude that 
$\partial_{c}f \circ \partial_{c} g= id$, and similarly,  $\partial_{c}g \circ \partial_{c} f = id$.  
 It remains to prove continuity.
 
By Lemma \ref{lem:technical}, it suffices to prove that $\partial_{c}f_{i}: \partial_{c}^{i}X \rightarrow \partial_{c}^{g(i)}(Y)$ is continuous for all $i$. 
Let $\gamma \in \partial_{c}^{i}X$ and let $U$ be an open set in $\partial_{c}^{g(i)} Y$ of the form $U=U(\partial_{c}f(\gamma),r,\epsilon) \cap \partial_{c}^{g(i)}Y$.  We must show that the inverse image of $U$ under $\partial_{c}f_i$ contains an open neighborhood of $\gamma$  in  $\partial_{c}^{i}X.$
 
Say $f$ is a $(K,L)$-quasi-isometry.  Let $M=M(K,L)$ be the Morse constant with respect to $(K,L)$-quasi-geodesics with endpoints on a $g(i)$--contracting geodesic.  This is possible by Theorem \ref{theorem:equivalence}.  Let $V=V(\gamma, r',\epsilon) \cap \partial ^{i}_{c}X$.  We claim that for sufficiently large $r'$, this open set   satisfies $\partial_{c}f_{i}(V) \subset U$.

Let $\beta \in V.$  Then $f \circ \beta$,  and similarly $f \circ \gamma$, are $g(i)$--contracting, $(K,L)$-quasi-geodesics  in $Y$.  Moreover, by definition of $V,$ $d(\beta(r'),\gamma(r')) < \epsilon$,  so it follows that 
$$d((f \circ \beta)(r'),(f \circ \gamma)(r')) < K\epsilon +L.$$  
Moreover, by choosing $r'$ sufficiently large, we may assume $(f \circ \beta)(r')$ and $(f \circ \gamma)(r')$ are arbitrarily far from the basepoint $f(x_0)$, say distance at least $r'' >> r$.
Straightening $f \circ \beta,f \circ \gamma$ to geodesic rays $\widehat{\beta}:=\partial_{c}f(\beta)$ and $\widehat{\gamma}:=\partial_{c}f (\gamma),$  we then have 
$$d(\widehat{\beta}(r''), \widehat{\gamma}(r''))  < K\epsilon +L +2M.$$
For $r'' > \frac{r}{\epsilon}(  K\epsilon +L +2M)$, the CAT(0) convexity property then guarantees that 
$d(\widehat{\beta}(t), \widehat{\gamma}(t))  < \epsilon$ for all $t < r$.  This proves the claim.
 \end{proof}

In particular,  Theorem \ref{qi-invariance} allows us to define the contracting boundary of a CAT(0) group $G$ as the homoemorphism type of the contracting boundary of any complete CAT(0) space on which $G$ acts properly, cocompactly, by isometries.

\begin {corollary}\label{for:groups}  If $G$ is a CAT(0) group, then $\partial_{c}G$ is  well-defined up to $G$-equivariant homeomorphism. 
\end{corollary}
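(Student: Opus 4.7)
The plan is to reduce this to Theorem~\ref{qi-invariance} together with an equivariance check.  Suppose $G$ acts properly, cocompactly, and by isometries on two complete CAT(0) spaces $X$ and $Y$, and fix basepoints $x_0 \in X$, $y_0 \in Y$.  First I would invoke the Švarc--Milnor lemma to produce a quasi-isometry $f : X \to Y$ that is \emph{coarsely $G$-equivariant}, meaning there exists $C>0$ such that $d_Y(f(gx),\, gf(x)) \leq C$ for every $g \in G$ and $x \in X$.  The standard way to ensure this is to set $f(gx_0)=gy_0$ on the orbit $Gx_0$ and extend boundedly using cocompactness, with properness of both actions providing the uniform bound.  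Theorem~\ref{qi-invariance} then delivers a homeomorphism $\partial_c f : \partial_c X \to \partial_c Y$.

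Second, I would observe that each $g \in G$ acts on $X$ as a $(1,0)$-quasi-isometry, so by Theorem~\ref{qi-invariance} it induces a homeomorphism $\partial_c g : \partial_c X \to \partial_c X$, which sends $[\alpha]$ to $[g \circ \alpha]$.  The identity $\partial_c(gh) = \partial_c g \circ \partial_c h$ follows from associativity of composition of isometries with rays, giving a $G$-action on $\partial_c X$ by homeomorphisms, and analogously on $\partial_c Y$.

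Finally, I would verify that $\partial_c f$ intertwines these actions.  Fix $[\alpha] \in \partial_c X$ and $g \in G$, and let $\beta$ be the unique geodesic ray at $f(x_0)$ asymptotic to $f \circ \alpha$ supplied by Corollary~\ref{cor:DtoD'}, so that $\partial_c f([\alpha]) = [\beta]$.  Then $\partial_c f([g\alpha])$ is represented by the unique ray at $f(x_0)$ asymptotic to $f \circ (g\alpha)$, whereas $g \cdot \partial_c f([\alpha])$ is represented by the ray $g \circ \beta$.  Coarse equivariance of $f$ gives $d_Y(f(g\alpha(t)),\, gf(\alpha(t))) \leq C$ for all $t$, and since $g$ is an isometry of $Y$, $g \circ \beta$ lies at bounded Hausdorff distance from $g \circ (f \circ \alpha)$.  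Chaining these estimates, the two representative quasi-geodesics have bounded Hausdorff distance from one another, hence are asymptotic, so they define the same class in $\partial_c Y$.

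The main obstacle will be the first step: producing the quasi-isometry $f$ with a uniform coarse-equivariance constant $C$ independent of $g$.  This is classical Švarc--Milnor bookkeeping (properness plus cocompactness forces any two elements $g',g'' \in G$ with $g'x_0, g''x_0$ near the same point of $X$ to be related by an element of a finite set), but it is the one nontrivial ingredient beyond Theorem~\ref{qi-invariance} and Corollary~\ref{cor:DtoD'} that the argument genuinely requires.
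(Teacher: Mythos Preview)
Your proposal is correct and in fact supplies strictly more detail than the paper does.  The paper treats this corollary as immediate from Theorem~\ref{qi-invariance}: it gives no separate proof, simply noting in the preceding sentence that the theorem ``allows us to define the contracting boundary of a CAT(0) group $G$ as the homeomorphism type of the contracting boundary of any complete CAT(0) space on which $G$ acts properly, cocompactly, by isometries,'' and then states the corollary.  The \v{S}varc--Milnor step and the verification of $G$-equivariance that you carefully carry out are left entirely implicit in the paper, so your argument is the natural unpacking of what the authors intend.
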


We close this section with some remarks on the direct limit topology.  The reader may wonder why we chose to use the direct limit topology on $\partial_c X$ rather than the cone topology induced from $\partial X$.  We claim that the direct limit topology is both more useful and more natural.  Indeed, we do not know if the map $\partial_cf$ in Theorem \ref{qi-invariance} is continuous with respect to the cone topology.  

Moreover, as the following example demonstrates, the direct limit topology holds more information about the underlying space.  This example describes two CAT(0) spaces, whose visual boundaries are identical and whose contracting boundaries are both set-wise equal to their visual boundaries (i.e., every ray is contracting). Thus, with respect to the cone topology, they would have the same contracting boundaries. Yet, the contracting boundaries of the two spaces equipped with the direct limit topology are distinct, reflecting the fact that the two spaces are not quasi-isometric.  

\begin{example}\label{ex:strips}  Let $T$ be the tree formed by a single horizontal line $L$ with a vertical  line $V_n$ attached at each integer point $n$ on $L$.  We can identify $T$ with the subspace of $\R^2$,
$$T = \{ (x,0)  \mid x \in \R\} \cup \{ (x,y) \mid x \in \Z, y \in \R\}. $$
Let $X$ be the space obtained by gluing a Euclidean strip of width $|n|$ along the line $V_n$.  So $X$ can be viewed as the subspace of $\R^3$,
$$X=  \{ (x,0,0)  \mid x \in \R\} \cup \{ (x,y,z) \mid x \in \Z, y \in \R, 0 \leq z \leq |x| \}. $$
Every geodesic ray to infinity in $X$ lies in the tree $T$ so the visual boundaries of $X$ and $T$ are homeomorphic.  Moreover, every such ray is contracting in both $X$ and $T$, so set-theoretically, the contracting boundaries also agree. 
On the other hand, consider the rays $\alpha_n$ formed by traveling along $[0,n] \subset L$ and then along $[0, \infty) \subset V_n$.  In $\partial_c T=\partial T$, these rays converge to $[0,\infty) \in L$.  In $\partial_c X$, however, these rays form a closed set since only finitely many lie in  $\partial_c^n X$ for any fixed $n$.  Indeed, the topology on  $\partial_c X$ is the discrete topology, hence it is not homeomorphic to $\partial_c T$. 

This example also illustrates the fact that the direct limit topology can be strictly finer that the cone topology since the set
$\{\alpha_n\}$ is closed in the direct limit topology on  $\partial_cX,$ but not the cone topology.
\end{example}

\section{Cube Complexes}
\label{sec:cubecomplexes}

In this section we discuss properties of contracting boundaries of CAT(0) cube complexes.  Let $X$ be a CAT(0) cube complex.  We recall the definition of a hyperplane in $X$ and refer the reader to \cite{haglundwise} or \cite{sageev} for additional details.  Define an equivalence relation on the set of midplanes of cubes generated by the condition that two midplanes are equivalent if they share a face.  A  \emph{hyperplane} is defined to be the union of all the midplanes in an equivalence class.  Hyperplanes in a CAT(0) cube complex are geodesic subspaces and divide the space into two components.

We will assume throughout this section that $X^{(1)}$,  the one-skeleton of $X$, has bounded valence $\nu$, or equivalently, that the ball of radius one about any vertex intersects at most $\nu$ hyperplanes.  It follows, more generally, that the number of hyperplanes intersecting any ball of radius $r$ is bounded by a function $\nu(r)$.  We will call such a cube complex \emph{uniformly locally finite}.  Note that this assumption implies that $X$ is both locally finite and finite dimensional.

\subsection{Criteria for contracting rays}
Two hyperplanes $H_1,H_2$ are said to be \emph{strongly separated} if they are disjoint and no hyperplane intersects both $H_1$ and $H_2$.  This notion was first introduced by Behrstock and the first author in \cite{behrstockcharney}, and it is featured  in the rank rigidity theorem of Caprace-Sageev \cite{capracesageev}.  In \cite{behrstockcharney},  it is shown that a periodic geodesic in $X$ which crosses an infinite sequence of strongly separated hyperplanes has quadratic divergence.  For periodic geodesics, lower divergence is equivalent to divergence, so in conjunction with Theorem \ref{theorem:equivalenceexpanded}, it follows that a periodic geodesic that crosses an infinite sequence of strongly separated hyperplanes is contracting.  The converse, on the other hand, is not true.   For example,  a contracting geodesic can be contained in a hyperplane $H$ so that any two hyperplanes that cross the geodesic, also intersect $H$. 

To establish necessary and sufficient conditions for a geodesic in a CAT(0) cube complex to be contracting, including non-periodic geodesics, we will need a more general notion of separation.

\begin{definition}  Two hyperplanes $H_1,H_2$ are \emph{ $k$-separated} if they are disjoint and at most $k$ hyperplanes intersect both $H_1$ and $H_2$. 
 In particular,  $H_1,H_2$ are $0$-separated if and only if they are strongly separated.
\end{definition}

\begin{theorem} \label{cube case} Let $X$ be a uniformly locally finite CAT(0) cube complex.  There exist $r>0, k \geq 0$ (depending only on $D$ and $\nu$), such that a geodesic ray $\alpha$ in $X$  is $D$-contracting if and only if  $\alpha$ crosses an infinite sequence of hyperplanes $H_1,H_2,H_3, \dots$ at points $x_i= \alpha \cap H_i$ satisfying 
\begin{enumerate}
\item $H_i,H_{i+1}$ are $k$-separated and 
\item  $d(x_i,x_{i+1}) < r.$
\end{enumerate}
\end{theorem}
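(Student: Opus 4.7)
The plan is to run both directions through Theorem~\ref{theorem:equivalenceexpanded}, which equates contracting with at least quadratic lower divergence, and to reduce everything to a single \emph{projection lemma} for $k$-separated hyperplanes: if $H$ and $H'$ are $k$-separated in a uniformly locally finite CAT(0) cube complex with valence bound $\nu$, then the nearest-point projection $\pi_{H'}(H) \subset H'$ has diameter at most a constant $C(k,\nu)$. I would prove this lemma by observing that a geodesic from a point of $H$ to its image in $\pi_{H'}(H)$ crosses only hyperplanes separating $H$ from $H'$, and that such separating hyperplanes either cross both $H$ and $H'$ (at most $k$ of these) or lie in a bridge region whose ``width'' is controlled by uniform local finiteness once the $k$-separation bounds the mutually-crossing hyperplanes.

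For the direction $(\Leftarrow)$, assume the sequence $H_1, H_2, \ldots$ exists. Fix $t > s > 0$ and let $\beta$ be any path from $\alpha(t-s)$ to $\alpha(t+s)$ that avoids $B(\alpha(t), s)$. At least $\lfloor 2s/r \rfloor - 2$ of the hyperplanes $H_i$ meet $\alpha$ inside $\alpha([t-s, t+s])$, and each such $H_i$ separates the endpoints of $\beta$, so $\beta$ must cross $H_i$ at some point $p_i$. For indices $i$ near the central index $j$ (with $x_j$ closest to $\alpha(t)$), one has $d(p_i, x_i) \geq s - O(r|i-j|)$ since $p_i$ lies outside the ball while $x_i$ lies within $r|i-j|$ of $\alpha(t)$. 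The projection lemma applied to the $k$-separated pair $(H_i, H_{i+1})$, together with $d(x_i, x_{i+1}) < r$, then yields $d(p_i, p_{i+1}) \geq d(p_i, H_{i+1}) \geq d(p_i, x_i) - C(k,\nu) - r$. Summing over the middle third of the indices produces $|\beta| \gtrsim s^2$, so $\alpha$ has at least quadratic lower divergence, hence is contracting by Theorem~\ref{theorem:equivalenceexpanded}.

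For the direction $(\Rightarrow)$, assume $\alpha$ is $D$-contracting. Uniform local finiteness (in particular bounded dimension) forces $\alpha$ to cross an infinite sequence of hyperplanes $K_1, K_2, \ldots$ at points spaced along $\alpha$ by at most $r_0(\nu)$. I would prove: there exist $R_0 = R_0(D, \nu)$ and $k = k(D, \nu)$ such that any two hyperplanes $K, K'$ crossed by $\alpha$ at points $x_K, x_{K'}$ with $d(x_K, x_{K'}) \geq R_0$ are $k$-separated. Suppose not: then more than $k$ hyperplanes $J_1, \ldots, J_n$ cross both, and uniform local finiteness forces some $J_m$ to meet $K$ at a point $q$ and $K'$ at a point $q'$ both at distance at least $R$ from $\alpha$, for any prescribed $R$, provided $k$ is large enough. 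The segment $[q, q']$ lies in $J_m$ and hence far from $\alpha$. Using the projection lemma applied to $K$ and nearby $K_i$'s that miss a neighborhood of $x_K$, one shows $\pi_\alpha(q)$ lies within $O(1)$ of $x_K$, and similarly $\pi_\alpha(q')$ within $O(1)$ of $x_{K'}$. Thus a ball near the midpoint of $[q, q']$, disjoint from $\alpha$, has $\alpha$-projection of diameter at least $R_0 - O(1)$, contradicting $D$-contracting once $R_0 > 2D + O(1)$. Finally, selecting $H_i := K_{n_i}$ along $\alpha$ at spacings in $[R_0, R_0 + r_0]$ yields the required sequence with $r = R_0 + r_0$ and $k = k(D, \nu)$.

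The main technical obstacle is the projection lemma, which must be proved directly from the combinatorics of hyperplanes in a uniformly locally finite CAT(0) cube complex. A secondary difficulty is controlling $\pi_\alpha$ on points of $K$ and $K'$ in the forward direction, since the $D$-contracting hypothesis does not apply directly to $K$ (which meets $\alpha$); this is handled by reducing to the projection lemma applied to nearby hyperplanes in the $K_i$ sequence that avoid the relevant portion of $\alpha$.
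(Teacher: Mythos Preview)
Your backward direction $(\Leftarrow)$ is sound and close in spirit to the paper's: both establish quadratic lower divergence by summing distances between successive crossing points $p_i = \beta \cap H_i$. You route this through a projection lemma (that $\pi_{H'}(H)$ has diameter $\leq C(k,\nu)$ for $k$-separated $H,H'$), whereas the paper proves the needed estimate directly via the combinatorial $d^{(1)}$-metric (Lemma~\ref{distance}): count hyperplanes separating $y_1$ from $y_2$ by inclusion-exclusion along $[y_1,x_1]\cup[x_1,x_2]\cup[x_2,y_2]$, discarding at most $k$ that cross both $H_i$ and at most $\nu r$ that cross the short middle segment. The paper's route avoids having to prove your projection lemma, whose sketch (``a geodesic from $p\in H$ to $\pi_{H'}(p)$ crosses only hyperplanes separating $H$ from $H'$'') is not correct as stated.

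Your forward direction $(\Rightarrow)$ has a genuine gap. You claim that any two hyperplanes $K,K'$ crossed by $\alpha$ at points $x_K,x_{K'}$ with $d(x_K,x_{K'})\geq R_0$ are $k$-separated, but the argument breaks in several places. First, if many $J_m$ cross both $K$ and $K'$, local finiteness gives some $J_m$ with $J_m\cap K$ far from $x_K$ --- but this does \emph{not} say $q\in J_m\cap K$ is far from $\alpha$, since $K$ may run close to $\alpha$ for a long stretch after crossing it. Second, ``$[q,q']\subset J_m$ and hence far from $\alpha$'' is a non sequitur: $J_m$ itself could cross $\alpha$. Third, your control of $\pi_\alpha(q)$ ``via the projection lemma applied to $K$ and nearby $K_i$'s'' is circular --- you would need those pairs to be $k$-separated, which is exactly what you are trying to establish. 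The missing idea is the paper's Lemma~\ref{hyp projection}: rather than trying to prove all far-apart crossed hyperplanes are $k$-separated, one uses the slimness of $\alpha$ to \emph{select} in each length-$R$ subsegment a hyperplane $H_i$ whose entire $\pi_\alpha$-image lies in that subsegment. Once the projections of $H_i$ and $H_{i+1}$ onto $\alpha$ are at least $4D$ apart, any hyperplane $J$ meeting both must (by Lemma~\ref{wide projection}) have its geodesic $[y_1,y_2]\subset J$ pass through a fixed ball $B(p,5D)$ on $\alpha$, and uniform local finiteness then bounds the number of such $J$. This selection step, not a direct contradiction from contracting, is what makes the forward direction work.
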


To prove this, we will need several lemmas.  We first consider the case where $\alpha$ is a $D$-contracting ray.

\begin{lemma}\label{hyp projection}  There exists a constant $R$ (depending only on $D$ and $\nu$) such that if $\alpha$ is a $D$-contracting ray and $x_1,x_2$ are two points on $\alpha$ at distance at least $R$, then the segment $[x_1,x_2]$ of $\alpha$ crosses a hyperplane $H$ whose projection on $\alpha$ lies entirely within $[x_1,x_2]$.
\end{lemma}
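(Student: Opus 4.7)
I would prove the lemma by contradiction, combining the slimness characterization of contracting geodesics from Theorem~\ref{theorem:equivalenceexpanded} with a local-finiteness count of hyperplanes meeting a fixed ball.

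\textbf{Setup and pigeonhole.} Assume $\alpha$ is $D$-contracting, $d(x_1,x_2)=R$, and, for contradiction, that every hyperplane $H$ crossing $[x_1,x_2]$ has $\pi_\alpha(H)\not\subseteq[x_1,x_2]$. Since $H$ is convex and $\pi_\alpha$ is continuous, $\pi_\alpha(H)$ is a connected subset of the geodesic $\alpha$, i.e.\ a subinterval containing the crossing point $y_H\in[x_1,x_2]$. Under our assumption this subinterval extends past $x_1$ or past $x_2$ and therefore contains one of the two endpoints. By pigeonhole, at least half of these hyperplanes---call the subfamily $\mathcal{H}$---satisfy, after possibly interchanging the roles of $x_1$ and $x_2$, that $x_2\in\pi_\alpha(H)$.

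\textbf{Slimness forces proximity.} For each $H\in\mathcal{H}$, pick $q_H\in H$ with $\pi_\alpha(q_H)=x_2$. By Theorem~\ref{theorem:equivalenceexpanded}, $\alpha$ is $\delta$-slim for some $\delta=\delta(D)$ (e.g.\ $\delta=3D+1$). Thin triangle condition~(i), applied to $q_H\in X$ and $y_H\in\alpha$, gives
\[
d\bigl(x_2,[q_H,y_H]\bigr)=d\bigl(\pi_\alpha(q_H),[q_H,y_H]\bigr)<\delta.
\]
Since $q_H,y_H\in H$ and $H$ is convex, $[q_H,y_H]\subset H$, so $d(x_2,H)<\delta$. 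Thus every $H\in\mathcal{H}$ meets $B(x_2,\delta)$, and uniform local finiteness gives $|\mathcal{H}|\leq\nu(\delta)$, a constant depending only on $\nu$ and $D$.

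\textbf{Counting contradiction.} On the other hand, finite-dimensionality of $X$ (which follows from uniform local finiteness) together with the $\ell^1$-versus-$\ell^2$ comparison inside each cube implies that the number of hyperplanes separating $x_1$ from $x_2$---equivalently, crossed by $\alpha|_{[x_1,x_2]}$---is at least $cR-O(1)$ for some $c=c(\nu)>0$. Combined with the pigeonhole estimate this yields $cR/2-O(1)\leq|\mathcal{H}|\leq\nu(\delta)$, forcing $R\leq 2\nu(\delta)/c+O(1)$. Setting $R$ strictly greater than this bound produces the contradiction, so the lemma holds with $R=\lfloor 2\nu(\delta)/c\rfloor+O(1)$. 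The conceptual heart of the argument is the slimness step, which converts the geometric assertion ``$\pi_\alpha(H)$ reaches past $x_2$'' into the local assertion ``$H$ passes within $\delta$ of $x_2$''; the remaining subtlety---the linear lower bound on hyperplane crossings along a length-$R$ geodesic---is a routine consequence of the bounded cube dimension implied by uniform local finiteness.
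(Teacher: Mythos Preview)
Your proof is correct and follows essentially the same approach as the paper: both use the slimness/thin-triangle characterization of contracting geodesics to show that any hyperplane whose projection reaches an endpoint $x_i$ must pass within $\delta$ of $x_i$, invoke uniform local finiteness to bound the number of such hyperplanes, and compare against the linear lower bound on hyperplanes crossed by a length-$R$ segment. The only cosmetic difference is that the paper handles the two endpoints $x_1,x_2$ separately (obtaining at most $K$ bad hyperplanes at each) rather than applying pigeonhole first.
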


\begin{proof}  Suppose $H$ crosses $[x_1,x_2]$ at a point $y$.  If some point $z \in H$ projects to $x_1$, then the thin triangle condition implies that the geodesic from $z$ to $y$ passes through the ball $B(x_1, \delta)$.  There are a bounded number of such hyperplanes, say at most $K$, with $K$ depending only on $\nu$.  Likewise, at most $K$ hyperplanes  $H$ that cross $[x_1,x_2]$ have projection containing $x_2$.  Set $R=2(K+1) \nu.$  Since $d(x_1, x_2) > R$, it follows that $[x_1,x_2]$ crosses more than $2K$ hyperplanes, and in particular it crosses a hyperplane whose projection contains neither $x_1$ nor $x_2$.  
\end{proof}

\begin{lemma}\label{wide projection1} Let $\alpha$ be a $D$--contracting geodesic segment and $x,y \in X$ two points not on $\alpha$.  Set $a=d(x, \pi_\alpha(x)), b=d(y, \pi_\alpha(y))$,  and  $c=d(\pi_\alpha(x),  \pi_\alpha(y)).$  If $c \geq 2D$, then 
$$ a+b+c-2D  \leq  d(x,y) \leq a+b+c. $$
\end{lemma}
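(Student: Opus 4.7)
The upper bound $d(x,y)\leq a+b+c$ is immediate from the triangle inequality applied to the concatenation $[x,\pi_\alpha(x)]\cup[\pi_\alpha(x),\pi_\alpha(y)]\cup[\pi_\alpha(y),y]$. For the lower bound, my plan is to produce two points $p,q$ lying on the geodesic $[x,y]$ in that order and satisfying $d(x,p)\geq a$, $d(q,y)\geq b$, and $d(p,q)\geq c-2D$; additivity of length along a geodesic then gives the result.

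Parameterize $[x,y]$ by arc length as $\gamma\co [0,L]\ra X$ with $\gamma(0)=x$ and $\gamma(L)=y$. Projection onto the convex set $\alpha$ is continuous (property (C2) of Lemma \ref{lem:cat}), so letting $\phi(t)$ denote the signed arc-length parameter of $\pi_\alpha(\gamma(t))$ along $\alpha$, normalized so that $\phi(0)=0$ and $\phi(L)=c$, the function $\phi\co[0,L]\ra\R$ is continuous. Define
\[
t_1 = \inf\{t\in[0,L] : |\phi(t)|\geq D\}, \qquad t_2 = \sup\{t\in[0,L] : |\phi(t)-c|\geq D\}.
\]
Both sets are closed and nonempty (since $c\geq 2D$), so both $t_1$ and $t_2$ are attained, and by continuity of $\phi$ we have the equalities $|\phi(t_1)|=D$ and $|\phi(t_2)-c|=D$. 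Set $p=\gamma(t_1)$ and $q=\gamma(t_2)$.

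The contrapositive of the $D$-contracting condition reads: $d(\pi_\alpha(x),\pi_\alpha(z))\geq D$ forces $d(x,z)\geq a$. Applying this with $z=p$ yields $d(x,p)\geq a$, and the symmetric statement gives $d(y,q)\geq b$. Since the nearest-point projection $\pi_\alpha$ is $1$-Lipschitz (property (C2)), $d(p,q)\geq|\phi(t_1)-\phi(t_2)|$; the four sign possibilities $\phi(t_1)\in\{\pm D\}$ and $\phi(t_2)\in\{c\pm D\}$ all yield $|\phi(t_1)-\phi(t_2)|\geq c-2D$, with the worst case being $\phi(t_1)=D$, $\phi(t_2)=c-D$. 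Summing along $[x,y]$ gives $d(x,y)\geq a+(c-2D)+b$.

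The main obstacle I anticipate is verifying the geodesic ordering $t_1\leq t_2$, since the projection $\pi_\alpha\circ\gamma$ can backtrack along $\alpha$ and $\phi$ need not be monotone. This is precisely where the hypothesis $c\geq 2D$ enters decisively: at $t_1$ one has $|\phi(t_1)-c|\geq c-D\geq D$, so $t_1$ itself lies in the set defining $t_2$, forcing $t_2\geq t_1$. The absolute-value formulation of the two defining sets is chosen specifically to accommodate arbitrary wiggling of $\phi$, and once $t_1\leq t_2$ is secured the remaining calculations are routine.
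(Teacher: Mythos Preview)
Your proof is correct. The approach is dual to the paper's: the paper fixes points $x',y'$ on $[x,y]$ at distances $a$ and $b$ from the endpoints and then invokes the contracting property to bound how far their projections can move (at most $D$ each), whereas you fix points $p,q$ on $[x,y]$ where the projection has moved exactly $D$ and then invoke the contrapositive of the contracting property to bound $d(x,p)\geq a$ and $d(y,q)\geq b$. Both arguments finish with the $1$-Lipschitz property of $\pi_\alpha$ and additivity along the geodesic.

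Your version has the minor advantage that the ordering $t_1\leq t_2$ is verified explicitly from the hypothesis $c\geq 2D$; in the paper's version, the analogous ordering ``$x'$ precedes $y'$'' (equivalently $d(x,y)\geq a+b$) is left implicit. The paper's argument is slightly shorter since it avoids the continuity/inf-sup apparatus, but the underlying idea is the same.
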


\begin{proof}  
Let $x'$ be the point on $[x,y]$ at distance $a$ from $x$ and let $y'$ be the point on $[x,y]$ at distance $b$ from $y$.  Then the projections of $[x,x']$ and $[y,y']$ on $\alpha$ have length at most $D$, hence
$d(x',y') \geq d(\pi_\alpha(x'),  \pi_\alpha(y')) \geq c-2D.$  Thus 
$d(x,y)= a+b+d(x',y') \geq a+b+c-2D$.  The other inequality follows from the triangle inequality.  
\end{proof}

\begin{lemma}\label{wide projection}
Let $\alpha$ be a $D$--contracting geodesic segment.  If $\beta=[x,y]$ is a geodesic segment whose projection on $\alpha$ has length at least $4D,$ then  $\pi_{\alpha}(\beta) \subset N_{5D}{\beta}$.  
\end{lemma}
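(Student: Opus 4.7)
The plan is to reduce to a key sub-claim: if $p \in \pi_\alpha(\beta)$ is at distance $\geq 2D$ from both endpoints of the projection interval $[q_1,q_2] = \pi_\alpha(\beta)$ (which, being the continuous image of the geodesic segment $\beta$, is a compact sub-interval of $\alpha$), then some preimage $z \in \beta$ of $p$ under $\pi_\alpha$ satisfies $d(z,p) \leq 2D$. Given the sub-claim, the lemma follows quickly. When $p$ is ``in the middle'' (at distance $\geq 2D$ from both $q_i$), the sub-claim directly gives $d(p,\beta) \leq 2D$. When $p$ lies within $2D$ of an endpoint, say $q_1$, I would choose $p^* \in [q_1,q_2]$ with $d(p^*, q_1) = 2D$; then $p^*$ is in the middle (since $d(p^*, q_2) = d(q_1,q_2) - 2D \geq 2D$), so the sub-claim produces $z^* \in \beta$ with $d(z^*, p^*) \leq 2D$, and the triangle inequality gives $d(p, z^*) \leq 2D + 2D = 4D \leq 5D$.

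To prove the sub-claim, I would pick $p', p'' \in \alpha$ on opposite sides of $p$ at distance exactly $2D$; both lie in $[q_1,q_2]$, so each has a preimage $z', z'' \in \beta$. Uniqueness of CAT(0) geodesics identifies $[z'', z']$ with a sub-segment of $\beta$, and continuity of $\pi_\alpha$ combined with the intermediate value theorem provides $z \in [z'', z']$ with $\pi_\alpha(z) = p$. The crucial point is that $z$ lies on the geodesic between $z''$ and $z'$, so $d(z'', z') = d(z'', z) + d(z, z')$. Now I would apply Lemma~\ref{wide projection1} to each of the three pairs $(z'', z), (z, z'), (z'', z')$; the corresponding projection distances $d(p'', p), d(p, p'), d(p'', p')$ equal $2D, 2D, 4D$, so the hypothesis $c \geq 2D$ is satisfied in each case. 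Writing $A = d(z,p)$, $B' = d(z',p')$, $B'' = d(z'',p'')$, the three ranges are $[B''+A, B''+A+2D]$, $[A+B', A+B'+2D]$, and $[B''+B'+2D, B''+B'+4D]$. Since $d(z'',z')$ equals both an element of the third range and the sum of elements of the first two, comparing the smallest sum to the largest direct value forces $B'' + 2A + B' \leq B'' + B' + 4D$, whence $A \leq 2D$.

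The main obstacle is this last arithmetic step: it is the precise matching of the $2D$-slack in Lemma~\ref{wide projection1}, applied three times, that pins down $d(z,p) \leq 2D$. Everything else is soft: continuity and the intermediate value theorem to locate $z$, the triangle inequality to pass from the middle sub-claim to the endpoint case, and the observation that $\pi_\alpha(\beta)$ being a connected interval of length $\geq 4D$ leaves exactly enough room to choose $p'$, $p''$, and $p^*$.
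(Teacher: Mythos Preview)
Your argument is correct and proceeds by a genuinely different route from the paper's.  The paper works globally: it takes the points $x',y'$ on $\beta$ at distances $a=d(x,\pi_\alpha(x))$ and $b=d(y,\pi_\alpha(y))$ from the endpoints, notes that the projections of $[x,x']$ and $[y,y']$ on $\alpha$ each move at most $D$, applies Lemma~\ref{wide projection1} once to the pair $(x',y')$ to obtain $a'+b'\le 4D$, and then invokes convexity of the metric to conclude that the whole segment $[x',y']$ stays within $4D$ of $\alpha$ (with an extra $D$ to cover the short pieces near the ends of the projection interval).  You instead work pointwise: for each interior point $p$ of the projection you manufacture a nearby preimage $z\in\beta$ by choosing auxiliary points $p',p''$ at distance $2D$ on either side, lifting them, and using the intermediate value theorem plus three applications of Lemma~\ref{wide projection1} to force $d(z,p)\le 2D$.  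Your approach avoids the convexity step and even yields a sharper constant ($2D$ rather than $4D$) away from the endpoints, at the cost of invoking Lemma~\ref{wide projection1} three times rather than once; the paper's approach is slightly more economical and makes the ``$\beta$ runs parallel to $\alpha$'' picture more transparent.
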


\begin{proof}
Let $a,b,c$ and $x',y'$ be as in the previous lemma, so  $d(x,y) = a+b+d(x',y') \leq a+b+c$, hence $d(x'y') \leq c$.  Now set
$a'=d(x', \pi_\alpha(x')), b'=d(y', \pi_\alpha(y'))$,  and $c'=d(\pi_\alpha(x'),  \pi_\alpha(y')).$  Applying Lemma \ref{wide projection1} to $x',y'$ gives 
$$ c \geq d(x',y') \geq a'+b'+c' -2D \geq a'+b'+c-4D.$$
It follows $a'+b' \leq 4D$.  By convexity of the metric the maximum distance between $[x',y']$ and its projection
$[\pi_\alpha(x'),\pi_\alpha(y')]$  is attained at one end, so the Hausdorff distance between them is at most $4D$. 
The lemma follows since every point on $[\pi_\alpha(x),  \pi_\alpha(y)]$ lies within $D$ of a point on 
$[\pi_\alpha(x'),  \pi_\alpha(y')]$.
\end{proof}

\begin{lemma}\label{k-sep}  Let $\alpha$ be a $D$--contracting geodesic ray.   Then there exists $k$ such that if $\alpha$ crosses two hyperplanes $H_1,H_2$ whose projections are distance at least $4D$,  then $H_1,H_2$ are $k$-separated.
\end{lemma}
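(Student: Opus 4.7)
The plan is to show that any hyperplane $K$ crossing both $H_1$ and $H_2$ must pass near a single fixed point $p$ on $\alpha$, and then invoke uniform local finiteness to bound the number of such $K$.

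First I would choose the witness point. Since $\alpha$ crosses $H_1$ and $H_2$, the projections $\pi_\alpha(H_1)$ and $\pi_\alpha(H_2)$ are (nonempty, connected) intervals on $\alpha$ whose closest points are at distance at least $4D$. Pick $p$ to be any point of $\alpha$ lying in the gap between these two intervals; for concreteness take $p$ to be the midpoint of the gap, so that $p$ lies strictly between every point of $\pi_\alpha(H_1)$ and every point of $\pi_\alpha(H_2)$ along $\alpha$.

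Next I would control an arbitrary hyperplane $K$ crossing both $H_1$ and $H_2$. Pick points $y_1 \in K \cap H_1$ and $y_2 \in K \cap H_2$, and let $\beta = [y_1,y_2]$. Since $K$ is a convex subspace of $X$, $\beta \subset K$. The projections satisfy $\pi_\alpha(y_i) \in \pi_\alpha(H_i)$, so $d(\pi_\alpha(y_1),\pi_\alpha(y_2)) \geq 4D$. Because $\pi_\alpha$ is continuous (property [C2]) and $\beta$ is connected, $\pi_\alpha(\beta)$ is a connected subset of $\alpha$, hence an interval, containing $\pi_\alpha(y_1)$ and $\pi_\alpha(y_2)$ as (interior or boundary) points. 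By the choice of $p$ in the paragraph above, $p$ lies between $\pi_\alpha(y_1)$ and $\pi_\alpha(y_2)$, hence $p \in \pi_\alpha(\beta)$. Now I would apply Lemma \ref{wide projection} to the segment $\beta$, whose projection on $\alpha$ has length at least $4D$, to conclude $\pi_\alpha(\beta) \subset N_{5D}(\beta)$. Thus $p$ is within distance $5D$ of some point of $\beta \subset K$, i.e., $p \in N_{5D}(K)$.

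Finally I would invoke uniform local finiteness. Every hyperplane crossing both $H_1$ and $H_2$ intersects the ball $B(p, 5D)$, and by assumption on $X$ the number of hyperplanes meeting such a ball is bounded by $\nu(5D)$, a function of $D$ and $\nu$ only. Setting $k = \nu(5D)$ yields the claim. The step I expect to require the most care is the short continuity argument showing that $p$ actually lies in $\pi_\alpha(\beta)$ (so that Lemma \ref{wide projection} can be applied); everything else is a direct plug-in of the preceding lemmas.
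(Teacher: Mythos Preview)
Your proposal is correct and follows essentially the same route as the paper's proof: choose a point on $\alpha$ in the gap between the two projections, apply Lemma~\ref{wide projection} to the geodesic in $K$ joining points of $K\cap H_1$ and $K\cap H_2$, and then invoke the bound $\nu(5D)$ from uniform local finiteness. The one omission is that $k$-separated requires $H_1$ and $H_2$ to be disjoint, so you should record at the start (as the paper does) that the projections being $\geq 4D$ apart forces $H_1\cap H_2=\emptyset$, since any common point would project to a point in both $\pi_\alpha(H_1)$ and $\pi_\alpha(H_2)$.
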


\begin{proof}  First note that $H_1,H_2$ are disjoint since their projections on $\alpha$ are disjoint.  Suppose a hyperplane $H$ crosses both $H_1$ and $H_2$.  Let $y_i \in H \cap H_i$.  Then the geodesic $\beta$ from $y_1$ to $y_2$ lies in $H$.  By Lemma \ref{wide projection}, $\beta$ passes through the ball of radius $5D$ about any point on $\alpha$ between the two projections.  The number of such hyperplanes is bounded. 
\end{proof}

For the converse implication, we will need the following lemma which is an analogue of Lemma 2.3 from \cite{behrstockcharney}.  

\begin{lemma} \label{distance}  There exists constants $c_1,c_2 > 0$, depending only on $r,k$ and $\nu$ satisfying the following.   Suppose $H_1,H_2$ are $k$-separated hyperplanes and $x_1, y_1 \in H_1$, $x_2, y_2 \in H_2$ are points with $d(x_1,x_2) \leq r$, then
$$d(y_1,y_2) \geq c_1(d(x_1,y_1)+d(x_2,y_2)) - c_2$$
\end{lemma}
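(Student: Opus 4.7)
The plan is to count hyperplanes separating pairs of the four relevant points and then translate the counts into a CAT(0) distance estimate. For $a, b \in X$, let $\mathcal{H}(a, b)$ denote the set of hyperplanes separating $a$ from $b$; on the $0$-skeleton $|\mathcal{H}(a,b)|$ equals the combinatorial $\ell^1$-distance. Since $X$ is uniformly locally finite, its dimension is bounded in terms of $\nu$, so the $\ell^1$ and CAT(0) metrics on $X^{(0)}$ are bi-Lipschitz equivalent with constants depending only on $\nu$. It therefore suffices to establish a bound of the form
\[
|\mathcal{H}(y_1, y_2)| \;\geq\; |\mathcal{H}(x_1, y_1)| + |\mathcal{H}(x_2, y_2)| - C,
\]
with $C = C(r, k, \nu)$, modulo bounded errors from replacing non-vertex $x_i, y_i$ by nearby vertices.

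The heart of the argument is a side-of-hyperplane dichotomy. Let $K \in \mathcal{H}(x_1, y_1)$. Since $x_1, y_1 \in H_1$ and hyperplanes are connected, $K$ must cross $H_1$. I claim that if, in addition, $K$ does not cross $H_2$ and does not meet $B(x_1, r)$, then $K \in \mathcal{H}(y_1, y_2)$. Indeed, $H_2$ is connected and disjoint from $K$, so $H_2$ lies entirely in a single component of $X \setminus K$; because $d(x_1, x_2) \leq r$ and $K$ avoids $B(x_1, r)$, the point $x_2$ lies on the same side of $K$ as $x_1$, and hence so does $y_2 \in H_2$. Since $y_1$ is on the opposite side of $K$ from $x_1$, the hyperplane $K$ separates $y_1$ from $y_2$.

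To count the exceptions, the $k$-separation hypothesis bounds the number of hyperplanes crossing both $H_1$ and $H_2$ by $k$, and uniform local finiteness bounds the number of hyperplanes meeting $B(x_1, r)$ by $\nu(r)$. Hence at most $k + \nu(r)$ elements of $\mathcal{H}(x_1, y_1)$ fail to lie in $\mathcal{H}(y_1, y_2)$, and symmetrically for $\mathcal{H}(x_2, y_2)$. A hyperplane lying in both $\mathcal{H}(x_1, y_1)$ and $\mathcal{H}(x_2, y_2)$ must cross both $H_1$ and $H_2$, so the overlap of the two families contributes at most $k$. Summing the two contributions and subtracting the overlap yields the combinatorial inequality above with $C = 3k + 2\nu(r)$.

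The last step is bookkeeping: convert $\ell^1$-counts back to CAT(0) distances via the bi-Lipschitz comparison and handle non-vertex $x_i, y_i$ by passing to nearest vertices at bounded cost, absorbing all resulting additive errors into $c_2$. The substantive content lies in the dichotomy above, where disjointness of $H_2$ from a sufficiently-separated $K$ converts the $k$-separation hypothesis into a combinatorial statement about which side of $K$ contains $y_2$. The main technical nuisance I expect is keeping explicit control of constants through the metric conversion, but since every correction is bounded in terms of $r, k, \nu$, this only adjusts $c_1$ and $c_2$.
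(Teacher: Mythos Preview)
Your proof is correct and follows essentially the same approach as the paper: reduce to the combinatorial $\ell^1$-metric via hyperplane counts, use $k$-separation and local finiteness to bound the exceptional hyperplanes, then convert back to the CAT(0) metric. The only cosmetic difference is that the paper phrases the combinatorial step as a parity count of crossings along the concatenated path $[y_1,x_1]\cup[x_1,x_2]\cup[x_2,y_2]$, whereas you argue directly about which side of a given hyperplane contains $y_2$; the two formulations are equivalent and yield the same bound up to the precise constant.
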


\begin{proof}    This is easy to see if we use the $d^{(1)}$-metric instead of the CAT(0) metric, where $d^{(1)}(a,b)$ is  the number of hyperplanes separating $a$ and $b$.  Since the CAT(0) metric is bounded above and below  by linear functions of the $d^{(1)}$-metric, depending only on $\nu$, the result will follow.

 Consider the path from $y_1$ to $y_2$ made up of the 3 geodesics segments $[y_1,x_1] \cup[x_1,x_2] \cup  [x_1,y_2]$.  Any hyperplane crosses this path at most 3 times and a hyperplane separates $y_1$ and $y_2$ if and only if it crosses the path an odd number of times.  Throwing out hyperplanes that cross both $[x_1,y_1]$ and $[x_2,y_2]$ (there are at most $k$ such)  and hyperplanes that cross $[x_1,x_2]$ and one of the other sides (there are at most $\nu r$), we are left with at least 
 $d^{(1)}(x_1, y_1)+d^{(1)}(x_2, y_2) - 2k - \nu r$ hyperplanes which cross this path exactly once.  
 
 Hence $d^{(1)}(y_1, y_2) \geq  d^{(1)}(x_1, y_1)+d^{(1)}(x_2, y_2) - 2k  - \nu r$ and the lemma follows.
 \end{proof}

\begin{proof}[Proof of Theorem \ref{cube case}]   First assume $\alpha$ is $D$-contracting and let $R$ be as in Lemma \ref{hyp projection}  Divide 
$\alpha$ into a sequence of segments $\alpha_1, \beta_1, \alpha_2, \beta_2, \dots $ where $\alpha_i$ has length $R$ and $\beta_i$ has length $4D$.  By Lemma \ref{hyp projection}, each $\alpha_i$ intersects a hyperplane $H_i$ whose projection lies entirely in $\alpha_i$.  It then follows from Lemma \ref{k-sep} that there exists $k$ such that $H_i, H_{i+1}$ are $k$-separated for all $i$, and by construction, their intersections with $\alpha$ are at distance at most $4D+2R$. 

Conversely, suppose $\alpha$ crosses an infinite sequence of hyperplanes $H_1,H_2, \dots$ satisfying conditions  (1) and (2) of the theorem.  We will show that the lower divergence of $\alpha$ is quadratic.  Let $z_0 = \alpha (t)$, and suppose $s<t $.  Consider a path $\beta$ from $z_1= \alpha(t-s)$ to $z_2= \alpha(t+s)$  which stays outside the ball of radius $s$ about $z_0$.   Every hyperplane crossing $[z_1,z_2]$ must also cross $\beta$.  Since the $H_i$ do not intersect, $\beta$ crosses these hyperplanes in the same order.  

Say $H_i,H_{i+1}, \dots H_{i+n}$ cross $\alpha$ between $\alpha(t-\frac{s}{2})$ and 
$\alpha(t+\frac{s}{2})$.  By assumption (2), $n \geq \frac{s}{r}$.  For $i \leq j \leq i+n$, set $x_j = H_j \cap \alpha$ 
and $y_j = H_j \cap \beta$.  Then
$$ d(x_j,y_j) \geq d(y_j, z_0) - d(z_0,x_j) \geq s-\frac{s}{2} = \frac{s}{2}. $$
It now follows from Lemma \ref{distance} that $d(y_j,y_{j+1})$ is bounded below by a linear function of $s$.  Since $\beta$ crosses at least $\frac{s}{r}$ such hyperplanes, the length of $\beta$ is bounded below by a quadratic function of $s$.   
\end{proof}

\begin{example}[Croke-Kleiner revisited] We return to the Croke-Kleiner space $X_\G$ discussed in Example \ref{ex:CK}.  Recall that $X_\G$ is the universal cover of the Salvetti complex shown in Figure \ref{fig:CK}.  We can identify the 1-skeleton of $X_\G$ with the Cayley graph of the RAAG $\AG$ and represent geodesic rays by edge paths (which cross the same sequence of hyperplanes as the CAT(0) geodesic ray).  The edges dual to any hyperplane are all labeled by the same generator and two hyperplanes which cross each other must be labelled by commuting generators.  Two hyperplanes contained in the same block are never $k$-separated for any $k$, since blocks are products.  Hence to be contracting, an edge path $\alpha$ must spend bounded amount of time in any single block.  Conversely, any segment of $\alpha$ not contained in a block must contain both an edge labelled $a$ and an edge labelled $d$.  The hyperplanes dual to these two edges are strongly separated since no generator commutes with both $a$ and $d$.  We conclude that $\alpha$ is contracting if and only if it spends a bounded amount of time in each block, or equivalently, if and only if it corresponds to an infinite word $w=w_0\,a\,w_1\,d\,w_2\,a\,w_3\,d \dots$ where the lengths of the $w_i$ are uniformly bounded.

The interest in this space stems from the fact that Croke and Kleiner \cite{crokekleiner} showed that modifying the metric on $X_\G$ by skewing the angle between the $b$ and $c$ curves, so that the $(b,c)$-cubes become parallelograms, changes the homeomorphism type of the boundary.  More generally, J.~Wilson \cite{wilson}  showed that any two distinct angles between the $b$ and $c$ curves gave rise to non-homeomorphic boundaries.  More recently, Y.~Qing \cite{qingthesis} showed that leaving the angles orthogonal but changing the side lengths of the cubes can also affect the homeomorphism type of the boundary.  More precisely,  she showed that the identity map, which is a quasi-isometry between these two metrics, does not induce a homeomorphism on the boundary.

In  \cite{crokekleiner} and \cite{wilson}, the change in the topology of the boundary that occurs when angles are skewed can be seen  in the way in which the boundaries of the blocks intersect.   In  \cite{qingthesis}, the change occurs in the components of the boundary corresponding to rays which spend longer and longer time in successive blocks.  None of these points appear in the contracting boundary.  Indeed, this example suggests that the restriction to contracting rays is optimal if one seeks a quasi-isometry invariant structure in the boundary.
\end{example}


\section{Applications to Right Angled Coxeter Groups}\label{application}

Since by Theorem  \ref{qi-invariance}, contracting boundaries are quasi-isometry invariants, they can be used to distinguish between quasi-isometry classes of groups.  In this section we will use contracting boundaries  to show that certain right-angled Coxeter groups are not quasi-isometric.  Some quasi-isometry classes of right-angled Coxeter groups are easily distinguished using number of ends, hyperbolicity, relative hyperbolicity, or divergence.  We will describe an example of two  groups that cannot be distinguished by any of these criteria, but have non-homeomorphic contracting boundaries. 

%


We begin with some preliminaries before constructing our main example.  Recall that for $\Gamma$ a finite simplicial graph with vertex set $V=\{v_{i}\}$ and edge set $E=\{(v_{i},v_{j})\},$ the \emph{right angled Coxeter group (RACG) associated to $\Gamma$}, denoted $W_{\Gamma},$ is the group with presentation
\begin{equation}\label{eq:presentation}W_{\Gamma}= \left<v_{i}\in V \mid  v_{i}^{2}=1, [v_{i},v_{j}]=1 \iff (v_{i},v_{j}) \in E\right>.
\end{equation}

Associated to $W_\G$ is a CAT(0) cube complex, the Davis complex for $\Sigma_\G$, on which $W_\G$ acts properly and cocompactly. It is constructed as follows.  Since every generator of $W_\G$  has order two, the Cayley graph has two, oppositely directed edges labelled $v_i$ connecting the vertices  $w$ and $wv_i$.  Collapsing these edges to a single, unoriented edge, the resulting graph is the 1-skeleton of $\Sigma_\G$.  Now attach cubes wherever possible, that is, fill in an $n$-cube wherever the graph contains the 1-skeleton of the $n$-cube.  The resulting cube complex is the Davis complex for $W_\G$.  Note that every hyperplane in $\Sigma_{\Gamma}$ intersects edges with a unique label $v,$ and hence it makes sense to define the \emph{type of a hyperplane} in $\Sigma_{\Gamma}$ to be this unique label.

\begin{figure}[htpb] 
\centering
\includegraphics[height=5 cm]{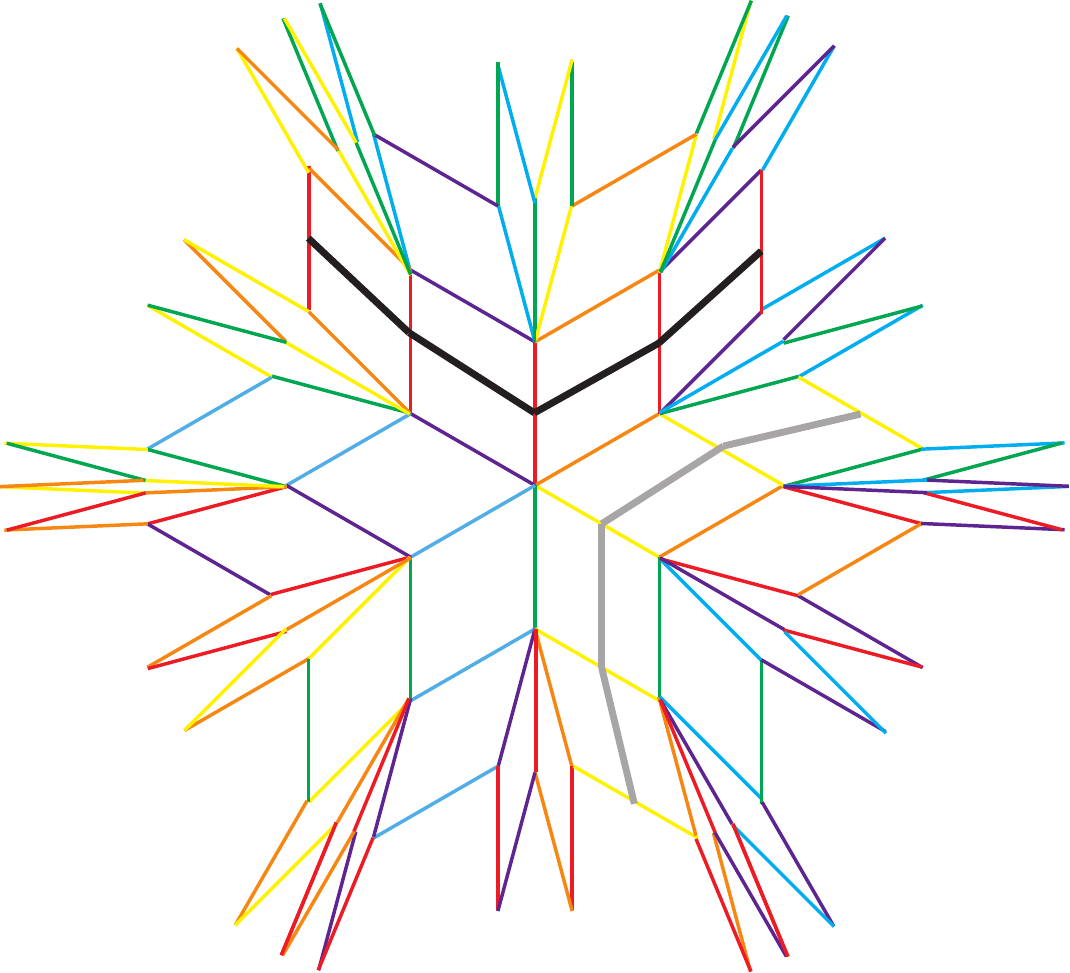}
\caption{A local picture of $\Sigma_{\Gamma}.$  The black and grey hyperplane are 1-separated.}\label{fig:hexagon}
\end{figure}    

First consider two easy examples.  
Let the graph $\Gamma$ be a regular hexagon, and $ W_{\Gamma}$ its corresponding right angled Coxeter group.  $W_\G$ acts as a reflection group on the hyperbolic plane  $\mathbb{H}^{2}$ with fundamental domain a right-angled hexagon. 
Hence the Davis complex $\Sigma_{\Gamma}$ is quasi-isometric to $\mathbb{H}^{2}$.  This can be seen directly by noting that the 2-complex dual to the tiling of  $\mathbb{H}^{2}$ by right-angled hexagons is (combinatorially) identical to the Davis Complex.  See Figure \ref{fig:hexagon}.   It follows that $\partial_{c}(W_{\Gamma})\cong \partial_{c}(\mathbb{H}^{2})=\mathbb{S}^1.$  In particular, notice that hyperplanes of different types corresponding to nonadjacent vertices are either 0-separated  or 1-separated, see for instance the hyperplanes in Figure \ref{fig:hexagon}. Similarly, notice that distinct hyperplanes of the same type are also at most 1-separated.

Next, let $\Omega$ be the graph $\Gamma$ with the three long diagonals of the hexagon added in as edges.  The resulting graph  $\Omega$ is the complete bipartite graph $K_{3,3}.$  In particular, $\Omega$ is a join of two subgraphs $G_{1},G_{2}$ where each $G_{i}$ is a discrete graph with three vertices.  It follows that the corresponding right angled Coxeter group is a direct product, $W_{\Omega}=W_{G_{1}} \times W_{G_{2}},$ and hence $\partial_{c}(W_{\Omega})=\emptyset.$  Note that since $\partial_{c}(W_{\Gamma}) \cong \mathbb{S}^1$ while $\partial_{c}(W_{\Omega})=\emptyset,$ it follows by Theorem \ref{qi-invariance} that $W_{\Gamma}$ and $W_{\Omega}$ are not quasi-isometric, but this was already clear since the former is   $\delta$-hyperbolic, while the latter is not.

\begin{center}
\begin{figure}
\tikzstyle{dot}=[circle,draw,fill, inner sep=0,minimum size=1.5mm]

\begin{tikzpicture}

\node[dot] (g1bl)  at (0,0) {};
\node[dot] (g1ml) [above of=g1bl] {}
	edge (g1bl);
\node[dot] (g1tl) [above of=g1ml] {}
	edge (g1ml);
\node[dot] (g1br) [right of=g1bl] {}
	edge (g1bl) ;
\node[dot] (g1mr) [above of=g1br] {}
	edge (g1br);
\node[dot] (g1tr) [above of= g1mr] {}
	edge (g1mr)
	edge (g1tl);

\node[dot] (g2bl)  at (2,0) {};
\node[dot] (g2ml) [above of=g2bl] {}
	edge (g2bl);
\node[dot] (g2tl) [above of=g2ml] {}
	edge (g2ml);
\node[dot] (g2br) [right of=g2bl] {}
	edge (g2bl) 
	edge (g2tl);
\node[dot] (g2mr) [above of=g2br] {}
	edge (g2br)
	edge (g2ml);
\node[dot] (g2tr) [above of= g2mr] {}
	edge (g2mr)
	edge (g2tl)
	edge (g2bl);
\draw [draw=gray, inner sep=1pt, thick] (g2mr) circle (2.5mm);
\draw [draw=gray, inner sep=1pt, thick]  (g2tl) circle (2.5mm);
\draw [draw=gray, inner sep=1pt, thick]  (g2bl) circle (2.5mm);
\draw [draw, inner sep=1pt, thick]  (g2ml) circle (2.5mm);
\draw [draw, inner sep=1pt, thick]  (g2tr) circle (2.5mm);
\draw [draw, inner sep=1pt, thick]  (g2br) circle (2.5mm);

\node[dot] (c1) [label=120:$c_1$] at (4.5,2) {};
\node[dot] (c2) [below of=c1, label=left:$c_2$] {}
	edge (c1);
\node[dot] (c3) [below of=c2, label=220:$c_3$] {}
	edge (c2);
\node[dot] (c4) [right of=c1, label=above:$c_4$] {}
	edge (c1);
\node[dot] (c5) [below of=c4, label=left:$c_5$] {}
	edge (c4);
\node[dot] (c6) [below of= c5, label=below:$c_6$] {}
	edge (c5)
	edge (c3);
\node[dot] (d1) [right of=c4, label=45:$d_1$] {}
	edge (c4)
	edge (c6);
\node[dot] (d2) [below of=d1, label=right:$d_2$] {}
	edge (d1)
	edge (c5);
\node[dot] (d3) [below of=d2, label=320:$d_3$] {}
	edge (d2)
	edge (c4)
	edge (c6);
	
\node[dot] (a1) [label=120:$a_1$] at (8.25,2) {};
\node[dot] (a2) [below of=a1, label=left:$a_2$] {}
	edge (a1);
\node[dot] (a3) [below of=a2, label=220:$a_3$] {}
	edge (a2);
\node[dot] (a4) [right of=a1, label=above:$a_4$] {}
	edge (a1);
\node[dot] (a5) [below of=a4, label=left:$a_5$] {}
	edge (a4);
\node[dot] (a6) [below of= a5, label=below:$a_6$] {}
	edge (a5)
	edge (a3);
\node[dot] (b1) [right of=a4, label=above:$b_1$] {}
	edge (a4);
\node[dot] (b2) [below of=b1, label=left :$b_2$] {}
	edge (b1);
\node[dot] (b3) [below of=b2, label=below:$b_3$] {}
	edge (b2)
	edge (a6);
\node[dot] (b4) [right of=b1, label=45:$b_4$] {}
	edge (b1)
	edge (b3);
\node[dot] (b5) [below of=b4, label=right:$b_5$] {}
	edge (b4)
	edge (b2);
\node[dot] (b6) [below of=b5, label=320:$b_6$] {}
	edge (b5)
	edge (b1)
	edge (b3);

\draw (.5, -.75) node  {$\Gamma$};
\draw (2.5, -.75) node {$\Omega=\textcolor{gray}{G_1} \ast G_2$};
\draw (5.54, -.75) node {$\Gamma_1$};
\draw (9.8, -.75) node {$\Gamma_2$};

\end{tikzpicture}
\caption{Graphs of $\Gamma, \Omega, \Gamma_1, \Gamma_2$.}
\label{fig:graphs}
\end{figure}
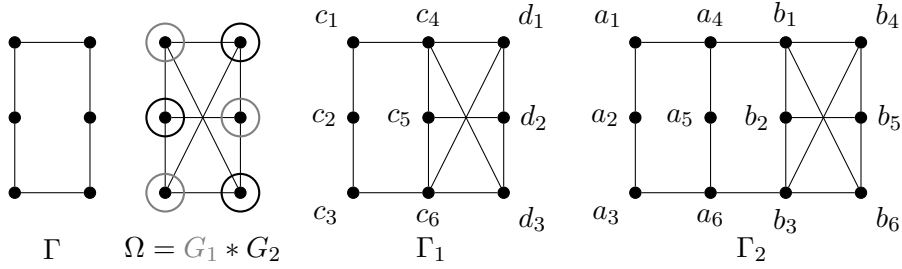
\end{center}


Our main example is constructed by amalgamating copies of $W_\Gamma$ and $W_\Omega$.  Let $\Gamma_{1}$ be the graph obtained by connecting a copy of $\Gamma$ and a copy of $\Omega$ as in Figure \ref{fig:graphs}.  Similarly let $\Gamma_{2}$ be the graph obtained by connecting two copies of $\Gamma$ and a copy of $\Omega$ as shown in in Figure \ref{fig:graphs}.  We will show that $W_{\Gamma_{1}}$ and $W_{\Gamma_{2}}$ are not quasi-isometric by proving that $\partial_{c}(W_{\Gamma_{1}})$ is totally disconnected, whereas $\partial_{c}(W_{\Gamma_{2}})$ contains a circle.  

\subsection{$\partial_{c}W_{\Gamma_{1}}$ is totally disconnected.}
Let $C$ denote the subgroup of $W_{\Gamma_1}$ generated by $\{c_4,c_5,c_6\}$.  Then  $W_{\Gamma_1}$ is the amalgamated product $W_{\Gamma_1}= W_{\G} \ast_{C} W_{\Omega} $, where 
 $\Gamma$ is subgraph spanned by the $c_i$-vertices and $\Omega$ is the subgraph spanned by $\{c_{4},c_{5},c_{6},d_{1},d_{2},d_{3}\}$.   

As noted above,  on its own, $W_\G$ is quasi-isometric to a hyperbolic plane and has contracting boundary a circle.
Viewed as a subgroup of $W_{\G_1}$, however, the picture changes.   The group $W_\Omega$ is a product hence 
every geodesic in $W_\Omega$ bounds a flat.  In particular, viewed as a subgroup of $W_{\G_1}$, an infinite word in $W_\G $ with arbitrarily long segments in $C = W_\G \cap W_\Omega$ is not contracting.  In fact, using Theorem \ref{cube case} it can be seen that these  are precisely the non-contracting rays in $W_\G$.  Moreover, for a contracting ray, the length of the maximal subword in $C$ determines the contraction constant. 
 
Note that the set of geodesics in $W_\G$ which have subwords in $C$ of arbitrarily long length, and hence are not contracting, is dense in  $\partial W_\G =\mathbb{S}^1.$  This follows since any geodesic word $w$ can be approximated by a sequence of geodesics which are identical to $w$ for an arbitrarily long amount of time, then remain in $C$ from there on.  It follows that the subspace of  $\partial_{c} W_{\G_1}$  formed by rays in $W_\G$ is a circle with a dense set of points removed.  In particular, it is totally disconnected.  
 
We now consider the general case of a geodesic ray in $W_{\G_1}$.  Let $T$ be the Bass-Serre tree for the amalgamated product decomposition, so $T$ is a bipartite graph with vertices labelled by cosets of  $W_\G$ and $W_\Omega$.  
 Let $x_0$ be the base point in  the Davis complex $\Sigma_{\G_1}$ corresponding to the identity vertex in the Cayley graph and let $v_0$ be the vertex of $T$ labelled $W_\G$.   For a geodesic segment or ray $\alpha$ in $\Sigma_{\G_1}$, based at $x_0$, $\alpha$ determines a path $I_\alpha$ in $T$, based at $v_0$ which we call the \emph{itinerary} of $\alpha$.  Note that paths which are sufficiently close have the same itinerary, so the itinerary of a point in $\partial_c \Sigma_{\G_1}$ is well-defined.  Note also that a contracting geodesic either has infinite itinerary or finite itinerary ending in a coset of $W_\G$.

 For two paths $I_1,I_2$ in $T$ based at $v_0$, write $I_1 \leq I_2$ if $I_1$ is an initial subpath of $I_2$.  Set
\begin{eqnarray*}
 U(I) &=& \{ \alpha \in \partial_c\Sigma_{\G_1} \mid I \leq I_\alpha \} \\
 \widehat  U(I) &=& \{ \alpha \in \partial_c\Sigma_{\G_1} \mid I = I_\alpha \} 
 \end{eqnarray*}
 Observe that for any finite path $I$,  $U(I)$ is both open and closed  since paths with sufficiently close initial segments have the same initial itinerary.  
  
Suppose $\alpha, \beta \in \partial_c\Sigma_{\G_1}$ have distinct itineraries, $I_\alpha \neq I_\beta$.  Then there is a finite path $I$ that is an initial segment of one of the itineraries, say $I_\alpha$, but not the other, i.e., $\alpha \in U(I)$ while $\beta \notin U(I)$.  Since $U(I)$ is open and closed,  $\alpha$ and $\beta$ do not lie in the same connected component.  We conclude that connected components must lie entirely in $\widehat U(J)$ for some (possibly infinite) path $J$.

Suppose $J$ has finite length and assume $J$ terminates in a coset of $W_\G$ (otherwise  $\widehat U(J)$ is empty).  Then $\widehat U(J)$ is a translate of the contracting boundary of $W_\G$ discussed above, namely a circle with a dense set removed.  Thus, it is totally disconnected.  

If $J$ is infinite,  we claim that  $\widehat U(J)$ consists of a single point.  For suppose $u, w$ are infinite, contracting words with itinerary $J$.  Since $T$ is bipartite, $J$ contains vertices of type $\Omega$ at arbitrarily large distances from the basepoint.   Thus $u$ and $w$ contain initial subwords lying in a coset $gW_\Omega$ for arbitrarily long $g$. Say $ga$ is a subword of $u$ and $gb$ is a subword of $w$, with $a,b \in W_\Omega$.  The length of $a^{-1}b$ is
bounded by the contracting constants of $u$ and $w$ since every hyperplane crossed by $a^{-1}b$ must be crossed by either $u$ or $w$.  It follows that the distance between $ga$ and $gb$ is uniformly bounded for all such $g$.  Thus,  $u$ and $w$ represent the same point at infinity.

We conclude that connected components of  $\partial_c\Sigma_{\G_1}$ are singletons.

 \subsection{$\partial_{c}W_{\Gamma_{2}}$ contains a circle.}
In order to see that $\partial_{c}W_{\Gamma_{2}}$ contains a circle, it suffices to show that there exists $D$ such that any geodesic ray in the Cayley graph of $W_{\Gamma_{2}}$ which lies in the subgroup $W_\G$ generated by the set $\{a_{i}\}$  is $D$-contracting.   For then, the circle corresponding to $\partial W_\G$ is a subset of  $\partial_{c}W_{\Gamma_{1}},$ and since the contraction constant is uniform, the usual topology on this circle subset remains intact.  However,  this follows from an application of Theorem \ref{cube case}, since hyperplanes which are at most $1$-separated in $\Sigma_ {\G}$ remain at most $1$-separated in $\Sigma_{\G_2}$.

\vskip .5in

\bibliographystyle{amsplain}
\bibliography{bib}
\end{document}